\documentclass[reqno,11pt]{amsart}
\usepackage{amsthm,amsfonts,amssymb,euscript,mathrsfs,graphics,color,amsmath,latexsym,marginnote,hyperref}
\usepackage[latin1]{inputenc}
\usepackage{amsmath}
\usepackage{graphicx}
\usepackage{amsfonts}
\usepackage{amssymb}
\usepackage{amsthm}
\usepackage{dsfont}    
\usepackage{mathrsfs}
\usepackage{amsthm}
\usepackage{verbatim}
\usepackage{esint}
\usepackage{stmaryrd}
\usepackage{tcolorbox}

\usepackage[color]{changebar}
\usepackage{marginnote}
\usepackage{xcolor}

\cbcolor{red!70!black}        
\theoremstyle{plain}

\usepackage{hyperref}
\usepackage{times}
\usepackage{todonotes}
\usepackage{mathtools}

\usepackage{marginnote}

\numberwithin{equation}{section}

\tcbuselibrary{skins}

\newtheorem{thm}{Theorem}[section]
\newtheorem{rmk}[thm]{Remark}
\newtheorem{prop}[thm]{Proposition}

\newtheorem{theorem}{Theorem}[section]
\newtheorem{proposition}[theorem]{Proposition}
\newtheorem{lemma}[theorem]{Lemma}
\newtheorem{corollary}[theorem]{Corollary}

\newtheorem{remark}[theorem]{Remark}
\newtheorem{remarks}[theorem]{Remark}

\newtheorem{definition}[theorem]{Definition}
\newtheorem{defn}[thm]{Definition}

\newcommand{\R}{\mathbb{R}}
\newcommand{\N}{\mathbb{N}}
\newcommand{\Z}{\mathbb{Z}}

\newcommand{\C}{\mathbb{C}}
\newcommand{\T}{\mathbb{T}}
\newcommand{\M}{\mathcal{M}}

\renewcommand{\phi}{\varphi}

\newcommand{\dt}{\partial_t}

\renewcommand{\H}{\mathcal{H}}

\newcommand{\bal}{{\bf \alpha}}
\newcommand{\bbt}{{\bf \beta}}
\newcommand{\I}{{\texttt{\bf J}}}
\newcommand{\J}{{\texttt{\bf J}}}

\renewcommand{\epsilon}{\varepsilon}

\newcommand{\ad}{{\rm ad}}
\newcommand{\id}{{\rm Id}}

\newcommand{\be}{{\bf e}}
\newcommand{\cF}{{\mathcal F}}
\newcommand{\im}{{\rm i}}
\newcommand{\cM}{{\mathcal M}}
\newcommand{\cR}{{\mathcal R}}
\newcommand{\cK}{{\mathcal K}}
\newcommand{\cG}{{\mathcal G}}

\newcommand{\td}{{\mathtt{d}}}
\newcommand{\tn}{{\mathtt{n}}}
\newcommand{\set}[1]{{\left\{#1\right\}}}
\newcommand{\bcoeffu}[1]{#1_{\bal^{1},\bbt^{1}}}
\newcommand{\bcoeffd}[1]{#1_{\bal^{2},\bbt^{2}}}

\newcommand{\aluno}{\alpha^{1}}
\newcommand{\aldue}{\alpha^{2}}
\newcommand{\btuno}{\beta^{1}}
\newcommand{\btdue}{\beta^{2}}

\newcommand{\cI}{{\mathcal I}}

\newcommand{\al}{{\alpha}}
\newcommand{\bt}{{\beta}}
\newcommand{\pa}[1]{{\left(#1\right)}}

\renewcommand{\epsilon}{\varepsilon}

\newcommand{\ddt}{\dfrac{d}{dt}}

\makeatletter

\def\l@subsection{\@tocline{2}{0pt}{2.5pc}{5pc}{}}
\def\l@subsubsection{\@tocline{3}{0pt}{4.5pc}{5pc}{}}
\renewcommand\tocchapter[3]{%
\indentlabel{\@ifnotempty{#2}{\ignorespaces#2.\quad}}#3%
}
\def\l@subsection{\@tocline{2}{0pt}{2.5pc}{5pc}{}}

\title{On the integrability of the  Kirchhoff-Pohozaev equation on tori}
\author{Dario Bambusi}
\address{\scriptsize{Dipartimento di Matematica, Universit\`a degli Studi di  Milano, 
	Via Saldini 50: 20133 Milano }}
\email{dario.bambusi@unimi.it}
	\author{Emanuele Haus}
	\address{\scriptsize{Dipartimento di Matematica e Fisica, Universit\`a degli Studi RomaTre, 
	Largo San Leonardo Murialdo 1, 00146 Roma}}
	\email{emanuele.haus@uniroma3.it}
	\author{Simone Marrocco}
	\email{simone.marrocco@uniroma3.it}
	\author{Michela Procesi}
	\email{michela.procesi@uniroma3.it}
	
\begin{document}
\maketitle
\begin{abstract}
In this paper we study the Kirchhoff-Pohozaev equation introduced in \cite{P2} and its constants of motion (see \cite{BoitiManfrin2025}, \cite{BoitiManfrin2026}) on $n$ dimensional tori. We show that these Hamiltonians are all in involution and we prove that they are generated by an infinite list of constants of motion which are all defined and in involution on a fixed phase space.  Then we study the Kirchhoff-Pohozaev equation restricted to a finite Fourier support. In dimension $n=1$ we show that such finite dimensional reduction is always completely integrable and provide an analytic Brikhoff normal form in a neighborhood of the origin. We also give sufficient conditions for integrability for $n>1$. We finally show that the formal Birkhoff Normal form of the Kirchhoff-Pohozaev equation is integrable for $n=1$.
\end{abstract}
	\tableofcontents 
\section{Equation and main results}
The Kirchhoff-Pohozaev equation on the $n$-dimensional torus $\T^n$ is given by
\begin{equation}\label{KP-orig}
	u_{tt}-\left(\dfrac{1}{a+b\int_{\T^n}|\nabla u|^2dx}\right)^2\Delta u=0, \qquad (t,x) \in \R\times\T^n, 
\end{equation}
where $a$ and $b$ are nonzero real constants. By suitably scaling time and the size of the solution $u(t,x)$, the constants $a$ and $b$ can be normalized to $1$, apart from the sign of $b$. For the sake of simplicity, throughout our work we set $$a = b = 1,$$
namely we  consider the equation
\begin{equation}\label{KP}
	u_{tt}-\left(\dfrac{1}{1+\int_{\T^n}|\nabla u|^2dx}\right)^2\Delta u=0, \qquad (t,x) \in \R\times\T^n.
\end{equation}
As phase space we consider 
\begin{equation}\label{phase space}
	{\mathcal H}^s:= H^s(\T^n) \times H^{s-1}(\T^n)\,,\quad s\geq 1\,,
\end{equation}
endowed with the norm
\begin{equation}\label{normuv}
	\|(u,v)\|_{\H^s}^2:= \|u\|_{H^s}^2+  \|v\|_{H^{s-1}}^2\,.
\end{equation}
The Kirchhoff-Pohozaev equation \eqref{KP-orig} was first introduced by Pohozaev \cite{P2} as an example of a Kirchhoff-type equation with a higher order conservation law, which also allowed him to prove the global well-posedness of the equation.
Very recently, Boiti and Manfrin (see \cite{BoitiManfrin2025}, \cite{BoitiManfrin2026}) proved that equation \eqref{KP-orig} admits infinitely many conservation laws, at every order.
This, of course, raises some deep questions about the possible existence of an integrable structure of the equation, especially in space dimension $n=1$. In this context we prove both  formal integrability for the \eqref{KP} equation and analytic integrability for its finite dimensional reductions.
Let us give an informal overview.

As a first step we  give a quite explicit formulation of the constants of motion (see Proposition \ref{costanti!}) and then prove that, in any space dimension $n$, the infinitely many constants of motion found by Boiti and Manfrin are in involution, see Proposition \ref{involuz} . To do so we construct a list of constants of motion, which we denote $(\I_k)_{k\geq 2}$ which control the Sobolev norms and such that the constants of motion of Boiti and Manfrin are all polynomials in the $\J_k$. This procedure is quite abstract and works in more general settings, such as equation \eqref{KP} on compact manifolds without boundary. We mention the  preprint \cite{campos} (which was posted on arxiv  when this manuscript  was almost completed) for a similar result in a slightly different setting, see the end of the Introduction for more detailed remarks.

The next step, which is our first main result  Theorem\ref{genero}, is to introduce  a new set of constants of motion, which we denote by $\cI_{\nu}^{(a)}$ with $a=1,2$ and $\nu$ running over the eigenvalues of the Laplacian. These constants of motion are all defined on the energy space $\H^1$ , and are in involution   in $\H^3$. Moreover we show that all the $\I_k$ as well as the Kirchhoff-Pohozaev Hamiltonian are expressed as absolutely convergent sums of the  $\cI_{\nu}^{(a)}$. Here we use the fact that the space domain is a torus $\T^n$.
In conclusion we have constructed a commuting family of Hamiltonians in $\H^3$, we call this the Kirchhoff-Pohozaev hierarchy.

Then we study the Kirchhoff-Pohozaev hierarchy resticted to a  finite Fourier support, which is well known to be an invariant subspace for the dynamics. We provide sufficient conditions on the support which guarantee that the restricted finite-dimensional system is integrable. In particular in our second main result Theorem \ref{finito}, we prove that in space dimension $n=1$  the Kirchhoff-Pohozaev hierarchy resticted to any  finite Fourier support is integrable and, in a neighborhood of zero  can be conjugated to a normal form depending only  on linear actions $p_j^2+q_j^2$.

We conclude by proving in Theorem \ref{formal-birk}, that  the Kirchhoff-Pohozaev hierarchy in dimension $n=1$  is formally integrable, namely there exists a formal symplectic change of variables which conjugates all the Hamiltonians in the Kirchhoff-Pohozaev hierarchy to an integrable formal Birkhoff Normal form depending only  on linear actions $p_j^2+q_j^2$.

\bigskip

In order to give a precise statement of our main results let us introduce some notation.
Let us define the Hamiltonian function $H : {\mathcal H}^s \to \R$ by
\begin{equation}\label{hamiltonian1int}
	H(u,v):=\frac{1}{2}\int_{\T^n}v^2dx-\frac{1}{2}\left(\dfrac{1}{1+\int_{\T^n}\left|\nabla u\right|^2dx}\right).
\end{equation}
Then equation \eqref{KP} is equivalent to the first-order system
\begin{equation}\label{sysJ}
	\dt\begin{pmatrix}
		u\\
		v
	\end{pmatrix}=J
	\begin{pmatrix}
		\nabla_u H\\
		\nabla_v H
	\end{pmatrix},
\end{equation}
where $\nabla_u, \nabla_v$ denote the $L^2$-gradients and $J$ is the Poisson tensor
\begin{equation}\label{J}
	J=\begin{pmatrix}
		0&I\\
		-I&0
	\end{pmatrix}.
\end{equation}
Given two functions $F,G : {\mathcal H}^s \to \R$, their Poisson bracket is defined by
\begin{equation}\label{def Poisson}
	\{F,G\}:= \int_{\T^n} \left( \nabla_u F \nabla_v G - \nabla_v F \nabla_u G \right)\,dx.
\end{equation}

Now we use  the fact that the space domain is the  torus $\T^n$ and  write $u$ and $v$ in Fourier series
\begin{equation}\label{fourier}
	u(t,x) = \sum_{j\in\Z^n\setminus\{0\}} u_j(t) \frac{e^{ij \cdot x}}{(2\pi)^\frac{n}{2}}, \qquad v(t,x) = \sum_{j\in\Z^n\setminus\{0\}} v_j(t) \frac{e^{ij \cdot x}}{(2\pi)^\frac{n}{2}}.
\end{equation}
We define 
\begin{equation}
	\Lambda = \left\{ \nu \in \mathbb{R}^+ : \nu = |j|^2 \text{ for some } j \in \mathbb{Z}^n \setminus \{0\} \right\}\,.
\end{equation}
For any $\nu \in \Lambda$, let us denote  $S_\nu:= \{j\in \Z^n\setminus \{0\} :  |j|^2=\nu\}$.
We define:
\begin{equation}\label{eq:Gamma1}
	\mathcal I_\nu^{(1)} = \sum_{j \in S_\nu} \left( A \nu |u_j|^2 + |v_j|^2 + \sum_{\Lambda\ni \nu'\ne \nu} \sum_{k \in S_{\nu'}} \frac{\nu \nu' }{\nu - \nu'} |u_j v_k - u_k v_j|^2 \right)
\end{equation}
\begin{equation}\label{eq:Gamma2}
	\mathcal I_\nu^{(2)} = \frac{1}{2} \sum_{j, k \in S_\nu} |u_j v_k - u_k v_j|^2\,,\qquad M_j:=  \frac{1}{2}  (u_j v_{-j} - u_{-j}v_j)\,.
\end{equation}

\begin{theorem}\label{genero}
	For all $\nu\in \Lambda$ the functions $\mathcal I^{(n)}_\nu$ with $n=1,2$  are bounded (and hence analytic) polynomials in $(u,v)\in \H^1$. Moreover for $(u,v)\in \H^3$ we have  
	\[
	\{\mathcal I^{(n)}_\nu,\mathcal I^{(m)}_{\nu'} \} = 	\{\mathcal I^{(n)}_\nu, M_j\}=0 \,,\qquad \forall n,m=1,2,\ \forall \nu,\nu'\in \Lambda, \ \forall j\in \mathbb{Z}^n \setminus \{0\}.
	\]
	Finally
	
	\begin{equation}\label{calIJ}
		H = \frac12(\sum_{\nu \in \Lambda} \mathcal I^{(1)}_\nu-1\,),\quad
		\I_k = \sum_{\nu \in \Lambda} \nu^{k-1} \mathcal I^{(1)}_\nu + (k-1) \sum_{\nu \in \Lambda} \nu^{k} \mathcal I^{(2)}_\nu.
	\end{equation}
	where the first sum is absolutely convergent in $\H^{2}$ while the second is so in  $\H^{k}$.
\end{theorem}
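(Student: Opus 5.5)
The plan is to argue in Fourier coordinates throughout. Here $A$ is the coefficient appearing earlier in the paper; the identity for $H$ below forces $A=(1+\sum_{\nu'}\nu'\sum_{k\in S_{\nu'}}|u_k|^2)^{-1}=(1+\int|\nabla u|^2)^{-1}$, and I take this as its definition.

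\emph{Boundedness.} For a fixed $\nu$ the set $S_\nu$ is finite, and $\nu\nu'/|\nu-\nu'|\le C_\nu$ uniformly in $\nu'\ne\nu$, because $\nu,\nu'$ are integers with $|\nu-\nu'|\ge1$. Expanding $|u_jv_k-u_kv_j|^2\le 2(|u_j|^2|v_k|^2+|u_k|^2|v_j|^2)$ then bounds the cross term in $\mathcal I^{(1)}_\nu$ by $C_\nu\|u\|_{H^1}^2\|v\|_{L^2}^2$. Together with $A\le1$ this gives boundedness, and analyticity, on $\H^1$; the bound for $\mathcal I^{(2)}_\nu$ is immediate.

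\emph{The identities \eqref{calIJ}.} For $H$, the weight $\nu\nu'/(\nu-\nu')$ is antisymmetric in $(\nu,\nu')$ while $|u_jv_k-u_kv_j|^2$ is symmetric in $(j,k)$. Hence the double cross sum cancels in $\sum_\nu\mathcal I^{(1)}_\nu$. Writing $G=\sum\nu|u_j|^2$, what remains is $AG+\|v\|^2$, and $AG-1=-1/(1+G)$ gives $H=\frac12(\sum_\nu\mathcal I^{(1)}_\nu-1)$. For $\I_k$, I insert the explicit expression of $\I_k$ from Proposition \ref{costanti!} into Fourier variables. Then I symmetrize the cross terms using
\[
\nu^{k-1}\frac{\nu\nu'}{\nu-\nu'}+\nu'^{k-1}\frac{\nu\nu'}{\nu'-\nu}=\nu\nu'\,\frac{\nu^{k-1}-\nu'^{k-1}}{\nu-\nu'}=\nu\nu'\sum_{a+b=k-2}\nu^a\nu'^b,
\]
a polynomial, and match coefficients of $|u_j|^2$, $|v_j|^2$ and $|u_jv_k-u_kv_j|^2$. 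The diagonal pairs with $|j|=|k|$ produce the term $(k-1)\nu^k\mathcal I^{(2)}_\nu$, since the symmetrized expression evaluated on the diagonal $\nu=\nu'$ equals $(k-1)\nu^k$. Absolute convergence is read off from the same symmetrized bounds. The weights $\nu^{k-1}\cdot\nu|u_j|^2$ and $\nu^{k-1}|v_j|^2$ are controlled by the $\H^k$ norm. The cross terms, with weight $\nu\nu'\nu^a\nu'^b$, are controlled via $\|u\|_{H^{a+1}}\|v\|_{H^{b}}$-type products, which are finite in $\H^k$. For $H$, finiteness of $\sum\nu|u|^2+|v|^2$ together with this cancellation suffices in $\H^2$.

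\emph{Involution.} This is the main step. The first observation is that any finite Fourier support $S$ gives a subspace on which the brackets computed in the full space coincide with the brackets of the restrictions. Indeed, every monomial of $\mathcal I^{(a)}_\nu$ or $\I_k$ containing an outside mode $k\notin S$ is at least quadratic in outside variables, or contains a factor $u_jv_k-u_kv_j$ that vanishes on the subspace. Hence the gradients in outside directions vanish there. On the subspace supported in $S$, let $\nu_1,\dots,\nu_N$ be the finitely many eigenvalues present. The identities \eqref{calIJ} say that $(\I_k)_{k=2}^{2N+1}$ is the image of $(\mathcal I^{(1)}_{\nu_i},\mathcal I^{(2)}_{\nu_i})_i$ under the matrix with rows $(\nu_i^{k-1},(k-1)\nu_i^{k})$. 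This is a confluent Vandermonde matrix in the variables $\nu_i$ (after factoring $\nu_i$ out of each pair of columns), so it is invertible because the $\nu_i$ are distinct and nonzero. Therefore each $\mathcal I^{(a)}_{\nu_i}$ restricted to $S$ is a constant-coefficient linear combination of the $\I_k$, and Proposition \ref{involuz} gives $\{\mathcal I^{(n)}_\nu,\mathcal I^{(m)}_{\nu'}\}=0$ on every finite-support subspace. The brackets are continuous functions on $\H^3$, and finitely supported vectors are dense in $\H^3$, so the brackets vanish identically there. The obstacle I expect is justifying this continuity, i.e.\ showing that the bracket of two such quartic-plus-$A$ functionals is well defined and continuous on $\H^3$. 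I would check it directly from the weight bounds above, where the derivative loses at most one power of $\nu$ per factor.

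Finally, for $\{\mathcal I^{(n)}_\nu,M_j\}=0$ I use that the flow of $M_j$ rotates $(u_j,u_{-j})$ and $(v_j,v_{-j})$ by the same real rotation (equivalently, a common phase in complex notation) and fixes all other modes. All the functions $\mathcal I^{(a)}_\nu$ are invariant under this rotation. First, $|j|=|-j|$, so $\nu$ is preserved. Second, $\sum_{j\in S_\nu}|u_j|^2$ and $\sum_{j\in S_\nu}|v_j|^2$ are rotation-invariant. Third, the $2\times2$ determinants $u_jv_k-u_kv_j$ are rotation-invariant when both indices lie in the rotated pair. When only one index lies in the pair, the sum over that pair is invariant.
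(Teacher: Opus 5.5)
Your proof is correct, but it reaches both \eqref{calIJ} and the involution by a different route from the paper.

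\textbf{The identities \eqref{calIJ}.} The paper restricts to the Galerkin spaces $\mathscr U_N$. There $\mathcal J(\lambda)$ is a rational function, and its partial-fraction decomposition yields the identities after Taylor expansion at $\lambda=0$. The paper then extends them to $\H^k$ by density and analyticity. You instead substitute Fourier expansions into the formula of Proposition \ref{costanti!} and match coefficients through the divided difference $\nu\nu'(\nu^{k-1}-\nu'^{k-1})/(\nu-\nu')$. This is exactly the coefficientwise form of the same partial-fraction identity, but you work directly on $\H^k$.

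\textbf{The involution.} The paper first uses the block relations $\{U_\nu,V_\nu\}=4W_\nu$, $\{U_\nu,W_\nu\}=2U_\nu$, $\{V_\nu,W_\nu\}=-2V_\nu$. These show that $\cI^{(2)}_\nu=U_\nu V_\nu-W_\nu^2$ commutes with every $U_\mu,V_\mu,W_\mu$, hence with $A$ and with every function of the blocks, such as the $\I_k$. The paper then only inverts the ordinary $N\times N$ Vandermonde $\M_1$ built from $\I_1,\dots,\I_N$, modulo $\cI^{(2)}$-terms. You instead use $2N$ constants $\I_2,\dots,\I_{2N+1}$ and a confluent Vandermonde. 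This writes both $\cI^{(1)}_{\nu_i}$ and $\cI^{(2)}_{\nu_i}$ as constant combinations of the $\I_k$, so every bracket reduces at once to Proposition \ref{involuz}. Factoring out $\nu_i$ alone does not give the confluent form: you must also subtract $\nu_i$ times the first column of each pair. After that the second column becomes $\nu_i^2(m\nu_i^{m-1})_m$, and invertibility follows from unisolvence of Hermite interpolation.

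\textbf{What each route buys.} Your route is more uniform. It also makes explicit a fact the paper uses tacitly: brackets commute with restriction to a finite support, because all outside-mode gradients vanish there. The paper's route needs half as many constants. It also gives the stronger structural fact about $\cI^{(2)}_\nu$ without appealing to Proposition \ref{involuz}. The $\H^3$ continuity step and the $M_j$ invariance are treated at the same level of detail in both.

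\textbf{One point to tighten.} Symmetrizing the cross terms presupposes absolute convergence of the \emph{unsymmetrized} double series with weight $\nu^{k-1}\nu\nu'/|\nu-\nu'|$. That series, not the symmetrized polynomial weight, is also what absolute convergence of $\sum_\nu\nu^{k-1}\cI^{(1)}_\nu$ refers to. Your crude bound $\nu\nu'/|\nu-\nu'|\le C_\nu\sim\nu^2$ loses derivatives here. You need an estimate like \eqref{estimatetame}, which trades powers of $\nu$ for powers of $\nu'$ when $\nu<\nu'$. Similarly, for $H$ the antisymmetric cancellation is legitimate on $\H^2$ because the cross double series converges absolutely there. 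The cancellation does not supply that convergence.
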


\begin{remark}
	Note that the  $\I_k$ are the constants of motion in formula \eqref{costanti!}
 and the formula \eqref{calIJ} may appear  quite surprising. In fact at a purely formal level, they are deduced in a very natural way from the Taylor expansion of the generating function of the $(\I_k)_{k\geq 2}$.
\end{remark}

A natural question is whether the  constants \eqref{eq:Gamma1}, \eqref{eq:Gamma2} are related to an integrable structure at least locally close to zero, following the approach of \cite{KuksinPerelman2010} or \cite{BambusiStolovitch2020}. Note that in both these results one needs a set of constants of motion all defined on the same phase space. This was in fact a motivation for Theorem \ref{genero} however we remark that our constants of motion do not fit their hypotheses. To partially bypass this problem we use the 
 well known fact  that  Kirchhoff type equations preserve the Fourier support.
\\
Given any symmetric\footnote{Namely such that if $j\in T$ then $-j\in T$. We need this condition in order to find real solutions.} set $T\subset \Z^n\setminus\{0\}$ we have that the subspace
\begin{equation}
	\label{UT}
	\mathscr{U}_T:=\{(u,v)\in \H^1 \;\vert{} \quad u_j=v_j=0 \,,\quad \forall j\notin T \}
\end{equation}
is invariant under the dynamics.
As a consequence, if $T$ is finite, we obtain a finite dimensional Hamiltonian system with $|T|$ degrees of freedom, so it is very natural to wonder whether and how many of  the constants of motion $\cI,M$  survive this reduction.  In our setting the key point is the decomposition of $T$ in eigenspaces of the Laplacian. 	We may write, w.l.o.g, $T=\cup_{h=1}^N T_h$ where $T_h= T\cap S_{\nu_h}\neq \emptyset$ for some  increasing sequence $\nu_h\in \Lambda$. We prove (see Lemma \ref{restringo}) that the number of algebraically independent constants of motion $\cI,M$ defined in \eqref{eq:Gamma1}, \eqref{eq:Gamma2} is at least $N+|T|/2$ and at most
$
N + K+ |T|/2   
$
where $K$ is the number of $T_h$ with cardinality $\geq 4$.
\\
In particular in dimension $n=1$ the number of independent constants of motion is equal to $|T|$ and the restricted system is integrable. 
More precisely we have the following
\begin{theorem}\label{finito}
	Consider any symmetric finite set $T=\{j_1,-j_1,\dots, j_d,-j_d\}\subset  \Z^n\setminus\{0\}$, with $|j_l| \neq | j_m|$ if $l\neq m$. Then $\mathscr{U}_T\equiv  \R^{4d}$ and:
	
	$(i)$ \ the constants of motion
	\begin{equation}
		\label{massimali?}
		\cI^{(1)}_{|j_1|^2},\dots, \cI^{(1)}_{|j_d|^2}\,,\; M_{j_1},\dots M_{j_d}
	\end{equation}
	are a set of $2d$ algebraically independent constant of motion in involution, which have an elliptic fixed point at $(u,v)=0$ with independent quadratic parts.

	$(ii)$ \ there exists $r=r(T)$ and a close to identity  symplectic analytic invertible map $\Phi: B_r(\R^{4d}) \to \R^{4d}$, $(u,v)\mapsto (p,q)$ which preserves the $M_j$ and brings the $\cI^{(1)}_{|j_k|^2}$ (and thus $H$, via formula eqref{} ) in normal form $$\cI^{(1)}_{|j_k|^2}\circ \Phi=\mathscr{I}_{k}(\{p_{h}^2+q_{h}^2\}_{h=1,\dots,2d })\,.$$
\end{theorem}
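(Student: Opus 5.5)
The plan is to work entirely on the finite-dimensional invariant subspace $\mathscr{U}_T$. There the constants of motion of Theorem \ref{genero} become real polynomials on $\R^{4d}$. Part $(ii)$ then reduces to a Vey--Eliasson type theorem on analytic normal forms near a non-degenerate elliptic equilibrium of an integrable system.

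\emph{Coordinates.} Reality of $(u,v)$ forces $u_{-j}=\overline{u_j}$ and $v_{-j}=\overline{v_j}$. Hence each pair $\pm j_l$ carries the two complex coordinates $u_{j_l},v_{j_l}$, which gives $\mathscr{U}_T\simeq\R^{4d}$.

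\emph{Involution on $\mathscr{U}_T$.} I first check that the Poisson bracket of the restrictions equals the restriction of the full bracket. Every summand in \eqref{eq:Gamma1}, \eqref{eq:Gamma2} is either diagonal ($|u_j|^2$, $|v_j|^2$) or of the form $|u_jv_k-u_kv_j|^2$. The partial derivative of such a term with respect to $u_k$ or $v_k$, for $k\notin T$, carries the factor $(u_jv_k-u_kv_j)$ or its conjugate, and this factor vanishes on $\mathscr{U}_T$. So on $\mathscr{U}_T$ the gradients are tangent to $\mathscr{U}_T$, and Theorem \ref{genero} (valid on $\mathscr{U}_T\subset\H^3$) gives that all the functions in \eqref{massimali?} Poisson commute.

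\emph{Quadratic parts.} Since $|j_l|\neq|j_m|$, each $T_h=\{\pm j_h\}$. The quadratic part of $\cI^{(1)}_{|j_l|^2}$ is therefore
\[
\sum_{j=\pm j_l}\big(A|j_l|^2|u_j|^2+|v_j|^2\big),
\]
the quartic cross terms being of higher order, while $\cI^{(2)}$ is purely quartic. The function $M_{j_l}$ is exactly quadratic: it is the angular momentum of the two oscillators $\pm j_l$, which share the frequency $\omega_l=\sqrt{A}|j_l|$. I will pass to complex symplectic coordinates $a_l^\pm$ diagonalizing the rotation in the $(j_l,-j_l)$ plane, and then to real coordinates $(p_h,q_h)$. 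This linear symplectic change should bring the quadratic parts to
\[
\cI^{(1)}_{|j_l|^2}\approx\omega_l\big(|a_l^+|^2+|a_l^-|^2\big),\qquad M_{j_l}=c\big(|a_l^+|^2-|a_l^-|^2\big),
\]
with $c\neq0$.

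These $2d$ quadratic forms are linearly independent, and they span a Cartan subalgebra of elliptic type: it contains the positive definite form $\sum_l\cI^{(1)}_{|j_l|^2,2}$, and all its elements are simultaneously diagonal in the actions $p_h^2+q_h^2$, with $h=1,\dots,2d$. So the origin is a non-degenerate elliptic equilibrium. The differentials of the lowest-order homogeneous parts are independent on an open dense set, so the full functions are functionally independent, hence algebraically independent. This proves $(i)$.

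\emph{Analytic normal form.} For $(ii)$ I invoke Vey's theorem (analytic version, or Eliasson / Zung). It applies to $2d$ commuting analytic functions on $\R^{4d}$ whose quadratic parts form a non-degenerate elliptic Cartan subalgebra. It yields a close-to-identity analytic symplectic map $\Phi$ on a ball $B_r$ such that each $\cI^{(1)}_{|j_k|^2}\circ\Phi$ and each $M_{j_l}\circ\Phi$ is an analytic function of $p_h^2+q_h^2$. Using formula \eqref{calIJ} restricted to $\mathscr{U}_T$, $H$ is then also normalized.

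The point I expect to be the main obstacle is obtaining that $\Phi$ preserves the $M_j$ exactly, not merely up to a function of the actions. My plan is to use the equivariant form of Vey's/Zung's theorem. The $M_{j_l}$ generate a free $\T^d$-action by linear rotations, and all $\cI^{(1)}$ are invariant under it. One can therefore average the normalizing transformation over this torus, or equivalently construct it inside the algebra of $\T^d$-invariant functions (Zung's torus-action approach). A normalization built this way commutes with the rotations, and so fixes the quadratic moment maps $M_{j_l}$.

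Alternatively, I would observe that $M_{j_l}\circ\Phi$ is a function of the actions whose flow is $2\pi$-periodic with the same linearization as $M_{j_l}$. Uniqueness of the action variables then forces $M_{j_l}\circ\Phi=M_{j_l}$. Checking the hypotheses of this rigidity argument is where I expect most of the work to lie.
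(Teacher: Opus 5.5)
Your proposal is correct and follows the paper's route. You restrict to $\mathscr{U}_T$, take involution from Theorem \ref{genero} and independence and ellipticity from the quadratic parts (as in Lemma \ref{restringo}), and then invoke Vey's theorem for $(ii)$. You also supply details the paper leaves implicit: the tangency of the Hamiltonian vector fields to $\mathscr{U}_T$, and the exact preservation of the $M_j$. Your rigidity argument for the latter closes easily. The linear flow of $M_{j_l}$ has a common period, so $M_{j_l}\circ\Phi^{-1}$ must have constant gradient in the actions, hence be linear, hence equal its quadratic part. One small correction: the quadratic part of $\cI^{(1)}_{|j_l|^2}$ has $A$ replaced by $A(0)=1$, so $\omega_l=|j_l|$ rather than $\sqrt{A}|j_l|$.
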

Since for $n=1$ the condition  $|j_l| \neq | j_m|$ if $l\neq m$ is automatically met, any finite symmetric  set $T\in \Z\setminus\{0\}$ automatically satisfies the hypotheses of Proposition \ref{finito}. We thus have
\begin{corollary}\label{uffa}
	The Kirchhoff-Pohozaev equation on the circle restricted to any finite dimensional set $\mathscr{U}_T$ is integrable. Moreover there exists a neighborhood of zero where the Birkhoff Normal Form algorithm converges and solutions all live on tori.
\end{corollary}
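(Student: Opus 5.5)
The corollary will follow directly from Theorem \ref{finito}. For $n=1$ every $j\neq 0$ has $S_{j^2}=\{j,-j\}$. Hence any finite symmetric $T\subset\Z\setminus\{0\}$ can be written as $T=\{j_1,-j_1,\dots,j_d,-j_d\}$ with $0<j_1<\dots<j_d$, and automatically $|j_l|\neq|j_m|$ for $l\neq m$. This is exactly the hypothesis of Theorem \ref{finito}.

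For integrability I will invoke part $(i)$. On $\mathscr U_T\simeq\R^{4d}$, which has $2d$ degrees of freedom (the $|T|=2d$ complex Fourier modes subject to the reality condition $u_{-j}=\bar u_j$), it supplies $2d$ functionally independent integrals in involution: $\cI^{(1)}_{j_k^2}$ and $M_{j_k}$. Theorem \ref{genero} gives their involution on $\H^3$, which contains $\mathscr U_T$ since $T$ is finite. Algebraic independence of polynomials gives functional independence on an open dense set. It remains to check that the restricted Hamiltonian is a function of these integrals. By \eqref{calIJ}, only the $\nu=j_k^2$ with $k=1,\dots,d$ survive on $\mathscr U_T$, so $H|_{\mathscr U_T}=\frac12(\sum_k\cI^{(1)}_{j_k^2}-1)$. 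Therefore $H$ Poisson-commutes with all of them, and the Liouville--Arnold theorem yields integrability.

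For the local statement I will use part $(ii)$. The map $\Phi$ conjugates each $\cI^{(1)}_{j_k^2}$ to a function $\mathscr I_k$ of the actions $p_h^2+q_h^2$, so $H\circ\Phi^{-1}=\frac12(\sum_k\mathscr I_k-1)$ depends only on the actions. Consequently, in the ball $B_r$ the Birkhoff normal form exists as a convergent analytic object. To identify $\Phi$ with the limit of the Birkhoff algorithm I will use the standard uniqueness argument: two analytic normalizing transformations, with nonresonance guaranteed by the independence of the quadratic parts from $(i)$, agree up to a map preserving the actions. So the formal Birkhoff series coincides with the Taylor expansion of $\Phi$ and converges.

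In the coordinates $(p,q)$, every action $p_h^2+q_h^2$ is conserved by the flow of $H$, and the angles rotate linearly with frequencies $\partial H/\partial(p_h^2+q_h^2)$. Hence every solution starting in $\Phi^{-1}(B_{r'})$, with $r'$ small enough that this set is invariant, lies on an invariant torus, possibly of lower dimension where some actions vanish. The only delicate point is this identification with "the Birkhoff algorithm converges", because normal forms are not unique in resonant cases. If the uniqueness argument fails there, I will instead state convergence in the sense that an analytic Birkhoff normalizing map exists, which is what $(ii)$ provides.
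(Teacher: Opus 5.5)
Your argument is essentially the paper's: for $n=1$ one has $S_{j^2}=\{j,-j\}$, so every finite symmetric $T$ satisfies the hypothesis of Theorem~\ref{finito}, and the corollary follows from items $(i)$ and $(ii)$. Your restriction $H|_{\mathscr U_T}=\frac12(\sum_k\cI^{(1)}_{j_k^2}-1)$ is the correct form of what the paper writes.

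I would drop the uniqueness detour. The relevant nonresonance is only joint, for the family $\cI^{(1)}_{j_k^2},M_{j_k}$; the frequencies $|j_k|$ of $H$ alone are resonant. Normalizing maps are then unique only up to a formal action-preserving map $e^{\{G,\cdot\}}$ with $G\in\cK$, which need not converge. So the detour shows only that the normal form itself is the convergent Taylor series of the $\mathscr I_k$, not that the formal normalizing transformation is the Taylor series of $\Phi$. Your fallback reading, namely existence of the analytic normalizing map of $(ii)$, is exactly what the paper intends.
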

\begin{remark}
	We point out that the condition $|j_i|\neq |j_m|$ in Theorem \ref{finito} is quite important. To see this consider  for $d\geq 2$,
	$T= \{j_1,-j_1,\dots, j_d,-j_d\}\subset  \Z^n\setminus\{0\}$, with  the $j_i$ distinct but with $|j_l| = | j_m|$ for all $l,m=1,\dots,d$. Of course this may occur only if the space dimension $n>1$.  Direct inspection of Formulas \eqref{eq:Gamma1}, \eqref{eq:Gamma2} shows that at most $d+2$ constants are non identically zero. If $d>2$ then $d+2<2d$.
\end{remark}

\smallskip

Of course the integrability of the restricted system  for $n=1$ does not imply that one is able to integrate the Kirchhoff-Pohozaev equation even close to zero.
The information which we have from the constants of motion in involution is however sufficient to prove, in the case $n=1$, the integrability of the formal Birkhoff normal form. More precisely we have the following.

\begin{theorem}\label{formal-birk}
	There exists a formal symplectic change of variables which preserves the $M_j$ and conjugates all the commuting Hamiltonians $\cI_k$, with $k\geq 1$ (and consequently all the $\I_k$ and $H$ )  to a normal form  $\mathcal N_k(\{p_j^2+q_j^2\}_{j\in \Z\setminus\{0\}})$.
\end{theorem}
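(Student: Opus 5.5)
We will run a formal Birkhoff normal form procedure order by order in the homogeneity degree. To organize it we use the one-dimensional structure $S_\nu=\{j,-j\}$, $\nu=j^2$. Here $\cI_k$ stands for $\cI^{(1)}_{k^2}$, and $\cI^{(2)}_{k^2}=|M_k|^2\cdot$const is already a function of the $M$'s.

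\emph{Variables and quadratic parts.} We pass to symplectic complex coordinates adapted to the rotation $M_j$. For each pair $\pm k$ we diagonalize the quadratic form $A k^2(|u_k|^2+|u_{-k}|^2)+|v_k|^2+|v_{-k}|^2$ together with $M_k$. This gives variables $z_{\pm k}$ in which
\[
\cI_k^{(2\text{-hom})}=\omega_k\big(|z_k|^2+|z_{-k}|^2\big),\qquad M_k=c\big(|z_k|^2-|z_{-k}|^2\big).
\]
Each $\cI_k$ is then a polynomial of degree $2$ plus degree $4$. The quartic part is $\sum_{k'\neq k}\frac{\nu\nu'}{\nu-\nu'}|u_kv_{k'}-u_{k'}v_k|^2$, suitably summed over $\pm$. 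Each monomial of the quartic part involves only modes in two shells, since it is of the form $|k|,|k'|$ with fixed $|k|$.

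\emph{Inductive step.} At each order $N$ we look for a single generating function $\chi_N$ of degree $N$ that puts all $\cI_k$ simultaneously in normal form up to order $N$. This uses the standard commuting-Hamiltonians argument. Suppose that after the previous steps every transformed $\cI_k$ equals $Z_k+R_k$, where $Z_k$ depends only on the actions and $R_k$ has degree $\geq N$. Involution gives $\{\cI_k,\cI_h\}=0$. At lowest order this yields $\{\omega_k I^{(k)},R_h^{(N)}\}=\{\omega_hI^{(h)},R_k^{(N)}\}$, where $I^{(k)}=|z_k|^2+|z_{-k}|^2$.

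Write the monomials $z^\alpha\bar z^\beta$. Each quadratic part $I^{(k)}$ acts diagonally, with eigenvalue $i\omega_k\sigma_k$, where $\sigma_k:=\alpha_k+\alpha_{-k}-\beta_k-\beta_{-k}$. The $M_k$ act with eigenvalue proportional to $\tau_k:=\alpha_k-\alpha_{-k}-\beta_k+\beta_{-k}$. The key observation is the separate resolvability in each shell. Since each $\cI_k$ and $M_k$ is an independent quadratic generator, the only monomials with all $\sigma_k=\tau_k=0$ are those with $\alpha=\beta$, i.e. functions of $|z_j|^2$. Every other monomial has some $k$ with $\sigma_k\neq0$ (or $\tau_k\ne0$, but $M$ is exactly preserved by invariance of all our transformations, so all terms commute with every $M_k$ and $\tau\equiv0$).

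The cohomological compatibility relation above is a cocycle condition. On a non-resonant monomial $m$ it reads $\sigma_k(m)\,r_h=\sigma_h(m)\,r_k$ for the coefficients. Hence $r_k=\sigma_k(m)\,c_m$ for a common $c_m$, and the single $\chi_N=\sum_m c_m m/(i)$ kills all $R_k^{(N)}$ at once. No small divisors appear, because we simply choose $k$ with $\sigma_k\neq0$ and divide by the integer $\sigma_k$. The resonant remainders are functions of $|z_j|^2$. Every $\chi_N$ commutes with all $M_j$, since it contains only monomials with $\tau\equiv0$, so the $M_j$ are preserved.

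\emph{Main obstacle: formal well-definedness.} We have infinitely many Hamiltonians and infinitely many modes, and $\cI_k$ is not a finite polynomial. We need the homogeneous pieces and the $\chi_N$ to be well-defined formal power series on a common space, e.g. with coefficients converging in $\H^3$ where Theorem \ref{genero} gives involution. We must also show that the Lie series $e^{\{\chi_N,\cdot\}}$ compose into a formal map. For this we would lean on Theorem \ref{genero}, together with the fact that each homogeneous component of $\cI_k$ involves only bounded couplings $\frac{\nu\nu'}{\nu-\nu'}$, to check that $\chi_N$ has bounded coefficients in the relevant polynomial class. The case distinction $\sigma_k\neq0$ only enlarges coefficients by integer factors. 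Finally, $\I_k$ and $H$ are normalized as well via \eqref{calIJ}, since $\mathcal I^{(2)}$ is a function of the $M$'s and hence of the actions.
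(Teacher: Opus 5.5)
Your inductive step is the paper's argument: Proposition~\ref{formal-lin} applied to the family $\cI_k=\cI^{(1)}_{k^2}$, followed by \eqref{calIJ}. The ingredients match one by one: the compatibility relation obtained from $\{\cI_k,\cI_h\}=0$ in the lowest non-normalized degree, a single generating function built from the first index with nonzero divisor, non-resonance from the independence of $|z_k|^2\pm|z_{-k}|^2$, and $M_j$-invariance forcing $\tau\equiv0$. Your non-resonance statement (every $\al\neq\bt$ with $\tau\equiv0$ has some $\sigma_k\neq0$) is exactly what the inductive step uses. Your remark that $\cI^{(2)}_{k^2}$ is a multiple of $M_k^2$ makes explicit why \eqref{calIJ} carries the normal form over to $\I_k$ and $H$. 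There are two small slips. The divisors are $\omega_k\sigma_k$, so the cocycle reads $\omega_k\sigma_k r_h=\omega_h\sigma_h r_k$ with $r_k=\omega_k\sigma_k c_m$. And $A$ is not a constant, so it equals $1$ in the quadratic part.

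The genuine gap is the step you call the main obstacle, and the plan you sketch for it does not work. Its premise is false. Since $A=(1+D_1)^{-1}=\sum_{m\ge0}(-D_1)^m$, each $\cI^{(1)}_\nu$ has components in every even degree, not just degrees two and four. For example, the degree-four part contains $-\nu U_\nu D_1$, and $D_1=\sum_j|j|^2|u_j|^2$ has coefficients unbounded in $j$, so there are no uniformly bounded couplings to propagate. More importantly, coefficient bounds are not what is at stake. In infinitely many variables, the coefficient of $z^\al\bar z^\bt$ in $\{F,G\}$ is a sum over the contracted index $j\in\Z\setminus\{0\}$, which is in general an infinite sum even with bounded coefficients. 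Convergence on $\H^3$ has no bearing on a formal statement; an honest analytic version would have to fight the loss of derivatives in the brackets.

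The missing idea is to work, as the paper does, in the ring $\cF$ of momentum-conserving formal power series (Definition~\ref{Hr}, Lemma~\ref{anello}). There, if $\pi(\al^1,\bt^1)=0$, the contracted index is determined by the remaining multi-indices. Hence every coefficient of a bracket is a finite sum, and $\cF$ is a Poisson algebra. It contains $D_1$, $A$ and all $\cI_k$, because every quadratic building block is supported on a single pair $\{j,-j\}$ and is therefore momentum conserving. The scaling-degree filtration then makes $e^{\{\chi,\cdot\}}$ well defined for $\chi\in\cF^{\geq1}$ (Lemma~\ref{lemniscata}). Baker--Campbell--Hausdorff, together with the degree of $\chi_N$ tending to infinity (Lemmas~\ref{BCH} and \ref{compo}), turns the infinite composition of your $e^{\{\chi_N,\cdot\}}$ into a single $e^{\{\cG,\cdot\}}$. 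That is what ``formal symplectic change of variables'' means here, and no coefficient estimates are ever needed. The tool was already in your hands: $\tau_k=0$ for all $k$ means $\al_k-\bt_k=\al_{-k}-\bt_{-k}$, which already implies $\pi(\al,\bt)=0$.
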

We conclude this introduction with a brief comparison with \cite{campos} which appeared on arxiv as we were completing our manuscript. The starting point of the two papers is  similar,  showing that there exists a sequence of constant of motion which at first order coincide with those of Boiti and Manfrin and are in involution. Then \cite{campos}  studies  a slightly more general model in which the Laplacian is substituted by a positive definite selfadjoint operator and focuses  on smoothness properties of the solutions proving equicontinuity in $H^s$. Here we focus on  integrability properties of the equation showing in particular that any finite dimensional reduction is integrable in the standard sense, at least in dimension 1. In this sense our approach allows to conclude that in a reasonable sense the Kirchoff Pohozaev equatin in dimension 1 is integrable, while the higher dimensional case is still open. 

\smallskip
\thanks{{\bf Acknowledgements.} We wish to thank Joackim Bernier and Claudio Procesi for helpul suggestions and discussions. DB was partially supported by GNFM, EH, SM and MP were partially supported by GNAMPA. 

\section{Constants of motion}

In this section, we show that equation \eqref{KP} admits infinitely many conserved quantities $\{\I_k\}_{k\geq2}$  of the form $\I_k=E_k+f(\{E_j, F_j, Q_j\})$, where $f$ is a homogeneous polynomial of degree $2$ in the variables $\{E_j, F_j, Q_j\}$ defined below in \eqref{BDQ}.

Following Boiti and Manfrin, for $k\in\N$ we start by setting the notation
$$
\nabla^{2k} u := \Delta^k u, \qquad \nabla^{2k+1} u := \nabla(\Delta^k u), \qquad \| u \| := \| u \|_{L^2(\T^n)} = \left( \int_{\T^n} |u|^2\, dx \right)^{\frac12}.
$$
Then, for $(u,v)\in{\mathcal H}^s$ and $k\in\N_{\geq1}$, we introduce the quantities (well-defined if $s\geq k$)
\begin{equation}\label{BDQ}
	\begin{aligned}
		D_k&:=\|\nabla^{k}u\|^2\quad\quad&
		B_k&:=\|\nabla^{k-1}v\|^2\quad\quad&
		Q_k&:=\int_{\T^n}\nabla^{k-1}u\cdot\nabla^{k-1}v\,dx\\
		A&:=\dfrac{1}{1+D_1}\quad\quad&
		E_k&:=\dfrac{B_k}{A}+A D_k\quad\quad&
		F_k&:=\dfrac{B_k}{A}-A D_k.
	\end{aligned}
\end{equation}
In this set of variables, the Hamiltonian takes the form
\begin{equation}\label{ham new}
	H=\frac12(B_1-A)=\frac12 AE_1 - A + \frac12 A^2.
\end{equation}
Also note that equation \eqref{KP} reads
\begin{equation}\label{KP A}
	u_{tt} = A^2(t) \Delta u,
\end{equation}
where $A=A(t)$ is the quantity defined in \eqref{BDQ}.
\begin{proposition}\label{costanti!}
	For $k\geq2$, the functions
	\begin{equation}\label{def I}
		\I_k= E_k- \sum_{j=2}^k Q_j Q_{k+2-j} + \frac14 \sum_{j=2}^{k-1}(E_j E_{k+1-j}-F_j F_{k+1-j})
	\end{equation}
	are independent constants of motion which are well defined on $\mathcal H^k$.
\end{proposition}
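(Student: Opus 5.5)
We verify directly that $\frac{d}{dt}\I_k=0$ along solutions of \eqref{KP A}, using a closed system of ODEs for $A,B_j,D_j,Q_j$. With $u_t=v$ and $v_t=A^2\Delta u$, integration by parts on $\T^n$ gives
\begin{equation*}
\dot D_j = 2Q_{j+1},\qquad \dot B_j = -2A^2 Q_{j+1},\qquad \dot Q_j = B_j - A^2 D_j,\qquad \dot A = -2A^2 Q_2 .
\end{equation*}
The first uses $\int\nabla^j u\cdot\nabla^j v=\int\nabla^{j}u\cdot\nabla^{j}v$, which equals $Q_{j+1}$ by definition. The second uses $\int \nabla^{j-1}v\cdot\nabla^{j-1}\Delta u=-Q_{j+1}$. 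The last follows from $\dot D_1=2Q_2$. From these we get
\begin{equation*}
\dot E_j=\frac{\dot B_j}{A}-\frac{B_j\dot A}{A^2}+\dot A D_j+A\dot D_j = -2AQ_{j+1}+2B_jQ_2-2A^2D_jQ_2+2AQ_{j+1}=2Q_2\,AF_j,
\end{equation*}
and in the same way $\dot F_j=-4AQ_{j+1}+2Q_2AE_j$. We also have $\dot Q_j=AF_j$. This gives a closed triangular system in which $Q_2$ plays the role of the single coupling.

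Next we differentiate \eqref{def I} term by term. The derivative $\frac{d}{dt}E_k=2AQ_2F_k$ must be cancelled by the quadratic corrections. Differentiating $-\sum_{j=2}^kQ_jQ_{k+2-j}$ gives $-2A\sum_{j=2}^k F_jQ_{k+2-j}$, whose $j=k$ term $-2AF_kQ_2$ cancels $\dot E_k$. For $\frac14\sum_{j=2}^{k-1}(E_jE_{k+1-j}-F_jF_{k+1-j})$, the $Q_2$-parts are $2AQ_2(F_jE_{k+1-j}+E_jF_{k+1-j})-2AQ_2(E_jF_{k+1-j}+F_jE_{k+1-j})$, and these cancel exactly. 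What remains is $\frac14\sum_j(4AQ_{j+1}F_{k+1-j}+4AF_jQ_{k+2-j})=A\sum_{j=2}^{k-1}(Q_{j+1}F_{k+1-j}+F_jQ_{k+2-j})$. After reindexing $i=k+2-j$ in the first piece, this equals $2A\sum_{j=2}^{k-1}F_jQ_{k+2-j}$. That cancels the remaining part $-2A\sum_{j=2}^{k-1}F_jQ_{k+2-j}$ of the $Q$-sum. The main obstacle is this index bookkeeping, in particular checking that the boundary terms ($j=k$ in the $Q$-sum and the range $2\le j\le k-1$) match exactly. Everything else is mechanical.

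Well-definedness on $\mathcal H^k$ holds because each $D_j,B_j,Q_j$ with $j\le k$ is controlled by $\|u\|_{H^k}$ and $\|v\|_{H^{k-1}}$, and $A\in(0,1]$. The computation above is first done for smooth data, for which $t\mapsto(u,v)$ is $C^1$ in a sufficiently smooth $\mathcal H^s$. It then extends to $\mathcal H^k$ by density and continuity of the flow, using the global well-posedness of \cite{P2}. For independence, note that $\I_k$ contains $E_k$, hence a quadratic part $\|\nabla^{k-1}v\|^2+\|\nabla^k u\|^2$ at the origin, plus terms of order at least $4$. These quadratic parts are independent for different $k$: in Fourier variables they weight $|v_j|^2,|u_j|^2$ by distinct powers $|j|^{2(k-1)}$. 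Hence their differentials are linearly independent near generic points, and so the $\I_k$ are functionally independent.
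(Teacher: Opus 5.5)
Your proof is correct, but it is organized differently from the paper's. Both start from the same table of derivatives: $\dot D_j=2Q_{j+1}$, $\dot B_j=-2A^2Q_{j+1}$, $\dot Q_j=AF_j$ and $\dot A=-2A^2Q_2$, hence $\dot E_j=2AQ_2F_j$ and $\dot F_j=2A(Q_2E_j-2Q_{j+1})$.

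The paper then \emph{derives} \eqref{def I}. By induction on $k_2-k_1$, using $\dot F_{k_2-1}$ to eliminate $Q_{k_2}$, it shows that each product $\dot Q_{k_1}Q_{k_2}$ is an exact time derivative of a quadratic expression in $E,F,Q$. It applies this to $\dot E_k=2Q_2\dot Q_k=2\frac{d}{dt}(Q_2Q_k)-2\dot Q_2Q_k$, so the correction terms in $\I_k$ appear as an antiderivative.

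You instead take \eqref{def I} as given and \emph{verify} $\frac{d}{dt}\I_k=0$. What makes this short is that the $Q_2$-terms act on every pair $(E_j,F_j)$ as the same infinitesimal hyperbolic rotation. So they drop out of $\frac{d}{dt}(E_jE_m-F_jF_m)$, leaving $4A(Q_{j+1}F_m+F_jQ_{m+1})$. The rest is the reindexing $j\mapsto k+1-j$, which you do correctly, including the boundary term $j=k$ of the $Q$-sum and the degenerate case $k=2$.

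Your route avoids the induction and checks itself. The paper's route explains where the correction terms come from and would produce them without knowing them in advance. You also justify differentiating along the flow and well-definedness on $\mathcal H^k$, which the paper leaves implicit by identifying $\dot F$ with $\{F,H\}$.

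Your independence argument is the paper's. To make ``hence functionally independent'' precise, note that $\epsilon^{-2}\I_k(\epsilon x)\to q_k(x)$ as $\epsilon\to0$, together with derivatives. Choose $x$ whose Fourier support meets at least as many shells $|j|^2$ as the number of $\I_k$ considered; then the $dq_k(x)$ are independent by a generalized Vandermonde argument, and so are the $d\I_k(\epsilon x)$ for small $\epsilon$.

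One small slip: your justification of $\dot D_j=2Q_{j+1}$ is a tautology as written. You mean $\dot D_j=2\int\nabla^ju\cdot\nabla^ju_t=2\int\nabla^ju\cdot\nabla^jv$.
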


\begin{remark}
	The functions  $\I_k$ coincide with the $k$-th order conservation law found by Boiti and Manfrin at the highest order of derivation, but differ by lower order terms (which can be expressed as polynomials in the lower order constants of motion $\I_j$).  It is not hard to check that the constant $\I_k$  controls the Sobolev norm $\H^k$. In fact the $\I_k$ coincide with the constants of motion of \cite{campos}, see formula (3.1) of that paper. 
\end{remark}

\smallskip
In order to prove Proposition \ref{costanti!} we start by analyzing the time evolution of the building blocks \eqref{BDQ}.
\\
Given a function $F : {\mathcal H}^s \to \R$, we denote $\dot F = \{ F,H \}$, where $H$ is the Hamiltonian defined in \eqref{hamiltonian1int}.
In the next lemma, we compute the Poisson brackets of the quantities defined in \eqref{BDQ} with the Hamiltonian $H$.

\begin{lemma}\label{time derivatives}
	Let $D_k, B_k, Q_k, A, E_k, F_k$ be the quantities defined in \eqref{BDQ}. Then
	\begin{equation}\label{BDQderivate}
		\begin{aligned}
			\dot{D}_k&=2Q_{k+1}\quad\quad&
			\dot{B}_k&=-2A^2Q_{k+1}\quad\quad&
			\dot{Q}_k&=B_k-A^2D_k=AF_k\\
			\dot{A}&=-2A^2Q_2\quad\quad&
			\dot{E}_k&=2 AQ_2 F_k= 2Q_2\dot{Q}_k\quad\quad&
			\dot{F}_k&=2A\left(Q_2E_k-2Q_{k+1}\right).
		\end{aligned}
	\end{equation}
\end{lemma}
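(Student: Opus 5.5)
The plan is to compute each bracket directly from the Hamiltonian equations of motion for \eqref{KP}, written in the form \eqref{sysJ}. Since $\nabla_v H = v$ and $\nabla_u H = -\tfrac12 A^2\cdot 2\Delta u\cdot(-1)\cdot(-1)$, i.e. $\nabla_u H = -A^2\Delta u$ (differentiate $-\tfrac12(1+D_1)^{-1}$, using $\nabla_u D_1 = -2\Delta u$), the flow is
\[
\dot u = v,\qquad \dot v = A^2\Delta u .
\]
This is consistent with \eqref{KP A}. Everything then reduces to differentiating the quadratic forms $D_k,B_k,Q_k$ along this flow and integrating by parts on $\T^n$, where there are no boundary terms. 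Throughout I work on $\H^{k+1}$, or formally, so that all the integrals make sense.

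The order of computation is as follows.
\begin{itemize}
\item For $D_k=\|\nabla^k u\|^2$ we get $\dot D_k = 2\int \nabla^k u\cdot\nabla^k v$. Integrating by parts once moves one derivative and gives $2\int\nabla^{k}u\cdot\nabla^{k}v = 2Q_{k+1}$. Here I must check the sign convention: for $k=2m$, $\int \Delta^m u\,\Delta^m v = \int \nabla^{2m}u\,\nabla^{2m}v$ is literally $Q_{k+1}$, and similarly for odd $k$. So no integration by parts is needed, only the definition.
\item For $B_k$: $\dot B_k = 2A^2\int\nabla^{k-1}v\cdot\nabla^{k-1}\Delta u = -2A^2\int\nabla^{k}v\cdot\nabla^{k}u = -2A^2 Q_{k+1}$, by one integration by parts. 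This uses that $\nabla^{k-1}\Delta$ pairs against $\nabla^{k-1}$ to give $-\nabla^k\cdot\nabla^k$, which I verify separately for even and odd $k-1$.
\item For $Q_k$: $\dot Q_k = \|\nabla^{k-1}v\|^2 + A^2\int\nabla^{k-1}u\cdot\nabla^{k-1}\Delta u = B_k - A^2 D_k$. This equals $AF_k$ by the definition of $F_k$.
\item For $A=(1+D_1)^{-1}$: $\dot A = -A^2\dot D_1 = -2A^2Q_2$.
\end{itemize}

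The derived quantities then follow by the chain rule. For $E_k$,
\[
\dot E_k = \frac{\dot B_k}{A} - \frac{B_k\dot A}{A^2} + \dot A D_k + A\dot D_k = -2AQ_{k+1} + 2Q_2B_k - 2A^2Q_2D_k + 2AQ_{k+1}.
\]
This gives $2Q_2(B_k - A^2 D_k) = 2AQ_2F_k = 2Q_2\dot Q_k$. For $F_k$ the same terms appear with the signs of the last two flipped:
\[
\dot F_k = -2AQ_{k+1} + 2Q_2B_k + 2A^2Q_2D_k - 2AQ_{k+1} = 2A(Q_2E_k - 2Q_{k+1}).
\]

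The only delicate point is the parity bookkeeping in the notation $\nabla^{2k}=\Delta^k$ versus $\nabla^{2k+1}=\nabla\Delta^k$. Specifically, I need to confirm that $\int\nabla^{m}f\cdot\nabla^{m}\Delta g = -\int\nabla^{m+1}f\cdot\nabla^{m+1}g$ for both parities of $m$:
\begin{itemize}
\item For $m=2l$, integrate $\Delta^l f\,\Delta^{l+1}g$ by parts once using $\Delta=\operatorname{div}\nabla$.
\item For $m=2l+1$, write $\nabla\Delta^l f\cdot\nabla\Delta^{l+1}g$ and move one divergence, giving $-\Delta^{l+1}f\,\Delta^{l+1}g$.
\end{itemize}
Both cases are routine on the torus, so I expect no real obstacle beyond keeping the signs straight.
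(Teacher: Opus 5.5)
Your proposal is correct and follows essentially the same route as the paper: derive the flow $\dot u = v$, $\dot v = A^2\Delta u$, read off $\dot D_k$ directly from the definition of $Q_{k+1}$, obtain $\dot B_k$ and $\dot Q_k$ by one integration by parts on $\T^n$, get $\dot A$ from $\dot D_1$, and finish $\dot E_k$, $\dot F_k$ by the chain rule. Your explicit parity check of $\int \nabla^m f\cdot\nabla^m\Delta g = -\int\nabla^{m+1}f\cdot\nabla^{m+1}g$ is a useful detail the paper leaves implicit. The only blemishes are cosmetic: the garbled intermediate expression for $\nabla_u H$ (your final $\nabla_u H=-A^2\Delta u$ is right), and the stray mention of integration by parts in the $D_k$ bullet, which you then correct yourself.
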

\begin{proof}
	Let $(u,v)$ solve \eqref{sysJ}. Then the formula
	\begin{equation}\label{dot D}
		\dot{D}_k=2Q_{k+1} 
	\end{equation}
	is obvious, since $u_t=v$. The formula for $\dot{B}_k$ follows immediately from \eqref{KP A} and integration by parts, since
	\begin{equation}\label{dot B}
		\begin{aligned}
			\dot{B}_k & = 2\int_{\T^n} \nabla^{k-1}v \cdot \nabla^{k-1}v_t\, dx = 2\int_{\T^n} \nabla^{k-1}v \cdot \nabla^{k-1}u_{tt}\, dx \\
			& = 2A^2\int_{\T^n} \nabla^{k-1}v \cdot \nabla^{k+1}u\, dx = - 2A^2\int_{\T^n} \nabla^{k}v \cdot \nabla^{k}u\, dx = -2A^2Q_{k+1}.
		\end{aligned}
	\end{equation}
	The formula for $\dot{Q}_k$ follows similarly, since
	\begin{equation}\label{dot Q}
		\begin{aligned}
			\dot{Q}_k & = \int_{\T^n} \nabla^{k-1}u_t \cdot \nabla^{k-1}v\, dx + \int_{\T^n} \nabla^{k-1}u \cdot \nabla^{k-1}v_t\, dx \\
			& = \| \nabla^{k-1}v \|^2 + \int_{\T^n} \nabla^{k-1}u \cdot \nabla^{k-1}u_{tt}\, dx
			= \| \nabla^{k-1}v \|^2 - A^2  \| \nabla^k u \|^2 = B_k-A^2D_k = AF_k.
		\end{aligned}
	\end{equation}
	Next, by \eqref{dot D} we have
	\begin{equation}\label{dot A}
		\dot A = -\frac{\dot{D}_1}{(1+D_1)^2} = -2A^2Q_2.
	\end{equation}
	Finally, using \eqref{dot D}-\eqref{dot A}, we get
	\begin{equation}\label{dot E}
		\dot{E}_k=\frac{\dot{B}_k}{A}-\frac{\dot A}{A^2}B_k+{\dot A}D_k+A\dot{D}_k= 2Q_2(B_k-A^2D_k)= 2AQ_2 F_k =2Q_2\dot{Q}_k
	\end{equation}
	and
	\begin{equation}\label{dot F}
		\dot{F}_k =\frac{\dot{B}_k}{A}-\frac{\dot A}{A^2}B_k-{\dot A}D_k-A\dot{D}_k
		= -4AQ_{k+1}+2Q_2(B_k+A^2 D_k) = 2A(Q_2E_k-2Q_{k+1}).
	\end{equation}
\end{proof}

\smallskip 
In order to prove Proposition \ref{costanti!} we prove two preliminary results.
\begin{lemma}
	We have that for all $2\leq  k_1\leq  k_2$
	\begin{equation}\label{mic}
		\begin{aligned}
			\dot{Q}_{k_1}Q_{k_2}=\begin{cases}
				\dfrac12 \ddt Q_{k_1}^2 \,,&\quad k_1= k_2 \\ &\\
				\dfrac18 \ddt (E_{k_1}^2 - F_{k_1}^2) \,, & \quad k_1= k_2-1\\ &\\
				\dfrac{1}{4}\ddt\left(E_{k_1}E_{k_2-1}-F_{k_1}F_{k_2-1}\right)-  Q_{k_1+1}\dot{Q}_{k_2-1}, &\quad 2\leq   k_1< k_2-1 
			\end{cases}
		\end{aligned}
	\end{equation}
\end{lemma}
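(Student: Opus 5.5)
The plan is to derive all three cases from the evolution formulas of Lemma \ref{time derivatives}. The identities needed are
\[
\dot Q_k = AF_k, \qquad \dot E_k = 2AQ_2F_k, \qquad \dot F_k = 2A(Q_2E_k - 2Q_{k+1}).
\]
The basic computation is, for $2\le a\le b$,
\[
\ddt\bigl(E_aE_b - F_aF_b\bigr) = \dot E_aE_b + E_a\dot E_b - \dot F_aF_b - F_a\dot F_b .
\]
Substituting the formulas above gives
\[
2AQ_2F_aE_b + 2AQ_2E_aF_b - 2A(Q_2E_a-2Q_{a+1})F_b - 2AF_a(Q_2E_b-2Q_{b+1}).
\]
All terms containing $Q_2$ cancel pairwise. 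What remains is
\[
\ddt\bigl(E_aE_b - F_aF_b\bigr) = 4AQ_{a+1}F_b + 4AF_aQ_{b+1} = 4Q_{a+1}\dot Q_b + 4\dot Q_a Q_{b+1}.
\]
This single identity is the key step, and it is purely algebraic.

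Next I specialize it.

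\emph{Case $k_1=k_2$.} This is immediate: $\dot Q_{k_1}Q_{k_1} = \tfrac12\ddt Q_{k_1}^2$.

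\emph{Case $k_1=k_2-1$.} Set $a=b=k_1$ in the identity. This gives $\ddt(E_{k_1}^2-F_{k_1}^2) = 8\dot Q_{k_1}Q_{k_1+1}$, which is exactly the claimed formula, since $k_1+1=k_2$.

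\emph{Case $k_1<k_2-1$.} Set $a=k_1$, $b=k_2-1$, so that $a<b$. The identity gives
\[
\tfrac14\ddt\bigl(E_{k_1}E_{k_2-1}-F_{k_1}F_{k_2-1}\bigr) = Q_{k_1+1}\dot Q_{k_2-1} + \dot Q_{k_1}Q_{k_2}.
\]
Rearranging this yields the third line of \eqref{mic}.

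I do not expect a real obstacle. The only point needing care is the bookkeeping of the cancellation of the $Q_2$ terms, together with checking the index shift $b+1=k_2$. The regularity needed is $(u,v)\in\H^{k_2}$, so that $Q_{k_2}$ and the brackets are defined. For less regular data one argues on smooth solutions and then uses density.
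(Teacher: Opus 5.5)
Your proof is correct and is essentially the paper's argument. The paper solves the formula for $\dot F_{k_2-1}$ for $Q_{k_2}$ to get $\dot Q_{k_1}Q_{k_2}=\tfrac14\dot E_{k_1}E_{k_2-1}-\tfrac14 F_{k_1}\dot F_{k_2-1}$, then applies the product rule using $-\tfrac14E_{k_1}\dot E_{k_2-1}+\tfrac14\dot F_{k_1}F_{k_2-1}=-Q_{k_1+1}\dot Q_{k_2-1}$; these two pieces add up exactly to your identity $\frac{d}{dt}(E_aE_b-F_aF_b)=4Q_{a+1}\dot Q_b+4\dot Q_aQ_{b+1}$, which you package slightly more cleanly (no division by $A$, and both nontrivial cases are specializations of one formula).
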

\begin{proof}
	If $k_1=k_2$  \eqref{mic} is obvious. Otherwise, we solve for $Q_{k+1}$ in the formula for $\dot F_k$ in \eqref{BDQderivate} (applied with $k=k_2-1$) and we get
	\begin{equation}\label{mic2}
		\begin{aligned}
			\dot{Q}_{k_1}Q_{k_2}&= \dot Q_{k_1} \left(\dfrac{1}{2}Q_{2}E_{k_2-1}-\dfrac{\dot{F}_{k_2-1}}{4A}\right)= \frac14 \dot E_{k_1}E_{k_2-1} -\frac14 F_{k_1}\dot F_{k_2-1}
		\end{aligned}
	\end{equation}
	if $k_1= k_2-1$ this concludes the proof of \eqref{mic}, otherwise
	\[
	\begin{aligned}
		&\dot{Q}_{k_1}Q_{k_2}= \frac14 \dot E_{k_1}E_{k_2-1} -\frac14 F_{k_1}\dot F_{k_2-1} = \frac14\ddt ( E_{k_1}E_{k_2-1}-F_{k_1}F_{k_2-1}) -\frac14 E_{k_1}  \dot E_{k_2-1} +\frac14 \dot F_{k_1} F_{k_2-1}
		\\
		&= \frac14\ddt ( E_{k_1}E_{k_2-1}-F_{k_1}F_{k_2-1}) -\frac12 E_{k_1} Q_2 \dot Q_{k_2-1} +\frac 12 A( Q_2 E_{k_1} -2 Q_{k_1+1} ) F_{k_2-1}
		\\
		&= \frac14\ddt ( E_{k_1}E_{k_2-1}-F_{k_1}F_{k_2-1}) - Q_{k_1+1} \dot Q_{k_2-1}.
	\end{aligned}
	\]
\end{proof}

\begin{lemma}\label{provo}
	We have that for all $2\leq  k_1 \leq  k_2$
	\begin{equation}
		\dot{Q}_{k_1}Q_{k_2}=\ddt \left( \frac18 \sum_{j=0}^{k_2-k_1-1}(E_{k_1+j}E_{k_2-j-1}-F_{k_1+j}F_{k_2-j-1}) - \frac12 \sum_{j=1}^{k_2-k_1-1} Q_{k_1+j} Q_{k_2-j} \right).
	\end{equation}
\end{lemma}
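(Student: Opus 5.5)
The plan is to argue by induction on the gap $m:=k_2-k_1\geq 0$, iterating the recursion of the previous lemma \eqref{mic}. The base cases $m=0$ and $m=1$ come straight from the first two lines of \eqref{mic}. For $m=0$ the first sum has no terms and the second is empty. Then $\dot Q_{k_1}Q_{k_1}=\frac12\frac{d}{dt}Q_{k_1}^2$, which matches the claimed formula if we read the second sum as contributing the ``diagonal'' term $-\frac12 Q_{k_1}Q_{k_1}$ with the sign convention fixed accordingly. Here I expect some care is needed with the conventions for empty and degenerate sums. For $m=1$ the first sum consists of the single term $j=0$, namely $\frac18(E_{k_1}^2-F_{k_1}^2)$, and the second sum is empty. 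This is exactly the second case of \eqref{mic}.

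For the inductive step, take $m\geq 2$, so that $k_1<k_2-1$. The third line of \eqref{mic} gives
\[
\dot Q_{k_1}Q_{k_2}=\frac14\frac{d}{dt}\left(E_{k_1}E_{k_2-1}-F_{k_1}F_{k_2-1}\right)-Q_{k_1+1}\dot Q_{k_2-1}.
\]
We then rewrite $Q_{k_1+1}\dot Q_{k_2-1}=\frac{d}{dt}(Q_{k_1+1}Q_{k_2-1})-\dot Q_{k_1+1}Q_{k_2-1}$. The pair $(k_1+1,k_2-1)$ has gap $m-2$, so the inductive hypothesis applies to $\dot Q_{k_1+1}Q_{k_2-1}$. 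Substituting, $\dot Q_{k_1}Q_{k_2}$ becomes a total derivative. It equals the $j=0$ term of the $E,F$ sum (together with its mirror $j=m-1$, which is the same product), minus the $j=1$ term $Q_{k_1+1}Q_{k_2-1}$ of the $Q$ sum, plus the formula for the shifted pair. Reindexing $j\mapsto j+1$ in the shifted sums should reproduce the full sums over $j$.

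The main obstacle is the bookkeeping of coefficients. The target carries $\frac18$ on the $E,F$ sum and $\frac12$ on the $Q$ sum, while the recursion generates $\frac14$ and $1$. These match only because each sum is symmetric under $j\mapsto (k_2-k_1-1)-j$ (respectively $j\mapsto k_2-k_1-j$), so every off-centre term appears twice. Centre terms appear once: the $E,F$ centre term when $m$ is odd, where it comes from the base case $m=1$ with $\frac18$, and the $Q$ centre term when $m$ is even, where it comes from the base case $m=0$ with $\frac12$. I will therefore check the two parities separately, descending via $(k_1,k_2)\to(k_1+1,k_2-1)$ until the gap reaches $0$ or $1$, and verify that the coefficients and the signs (minus on $Q$) agree term by term. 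In the even case this includes checking that the base-case term $\frac12 Q^2$ enters with the correct sign after the alternating substitutions.

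Should the induction become messy, the fallback is to unroll the recursion completely. That gives $\dot Q_{k_1}Q_{k_2}$ as an alternating telescoping sum over the chain of pairs $(k_1+i,k_2-i)$, which can then be grouped directly into the two symmetric sums.
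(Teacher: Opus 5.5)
Your strategy is the paper's: induction on $m=k_2-k_1$ via the third line of \eqref{mic}, with the step $(k_1,k_2)\to(k_1+1,k_2-1)$ and the pairing of off-centre terms. The base case $m=0$, however, is wrong as you state it. For $k_1=k_2$ both sums in the statement are empty under the usual convention, so the right-hand side is $0$. The left-hand side is $\dot Q_{k_1}Q_{k_1}=\frac12\frac{d}{dt}Q_{k_1}^2=AF_{k_1}Q_{k_1}$, which is not identically zero. For instance, on $\mathbb{T}^1$ with $u=\epsilon\cos x$, $v=\cos x$, $k_1=k_2=2$, one gets $Q_2=\epsilon\pi$ and $\dot Q_2=\pi(1-A^2\epsilon^2)$. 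So the statement is false at $k_1=k_2$. Reading in a ``diagonal term $-\frac12Q_{k_1}Q_{k_1}$'' gives the wrong sign. Only the nonstandard oriented convention $\sum_{j=a}^{b}=-\sum_{j=b+1}^{a-1}$, which turns $\sum_{j=1}^{-1}$ into $-Q_{k_1}^2$, would make it true.

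This matters for the induction. If you feed the lemma's gap-$0$ instance into your step at $m=2$, you get $\frac14(E_{k_1}E_{k_1+1}-F_{k_1}F_{k_1+1})-Q_{k_1+1}^2$ instead of the correct expression ending in $-\frac12Q_{k_1+1}^2$. Your bookkeeping remark is off for the same reason. The even $Q$ centre term is not produced once, with coefficient $\frac12$, by the gap-$0$ case. It is produced by the gap-$2$ step with coefficient $-1$, and the diagonal identity corrects it by $+\frac12$.

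The repair is simple. Prove the lemma for $k_2>k_1$ only; this is all Corollary \ref{lemma1} uses. Take $m=1$ and $m=2$ as base cases. For $m=2$ the third line of \eqref{mic} reads $\dot Q_{k_1}Q_{k_1+2}=\frac14\frac{d}{dt}(E_{k_1}E_{k_1+1}-F_{k_1}F_{k_1+1})-Q_{k_1+1}\dot Q_{k_1+1}$. Combined with $Q_{k_1+1}\dot Q_{k_1+1}=\frac12\frac{d}{dt}Q_{k_1+1}^2$, this gives exactly the stated $m=2$ formula. For $m\geq 3$ the $j=1$ and $j=m-1$ terms of the $Q$-sum are distinct. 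Your pairing and the reindexing $j\mapsto j+1$ then go through verbatim from gap $m-2\geq1$.

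The paper's own proof has the same hole. It checks only $m=1$, and at $m=2$ it invokes the induction hypothesis at gap $0$, which is neither verified nor true with empty sums. Your instinct to treat the bottom of the even chain separately is therefore the right one, once it is done as above.
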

\begin{proof}
	We proceed by induction on $k_2-k_1$. If $k_2-k_1=1$, the thesis follows immediately by the second line in \eqref{mic}.
	If $k_2-k_1\geq2$ we use the third line in \eqref{mic} and obtain
	\begin{equation}\label{Q dot Q}
		\begin{aligned}
			\dot{Q}_{k_1}Q_{k_2}
			&= \frac14\ddt ( E_{k_1}E_{k_2-1}-F_{k_1}F_{k_2-1}) - Q_{k_1+1} \dot Q_{k_2-1}
			\\
			&= \ddt\left( \frac14( E_{k_1}E_{k_2-1}-F_{k_1}F_{k_2-1}) - Q_{k_1+1} Q_{k_2-1}\right) + \dot Q_{k_1+1} Q_{k_2-1}
		\end{aligned}
	\end{equation}
	Since $k_1+1\leq k_2-1$, we can apply the induction hypothesis to $\dot Q_{k_1+1} Q_{k_2-1}$. Thus, we get from \eqref{Q dot Q}
	\begin{equation}\label{Q dot Q 2}
		\begin{aligned}
			&\dot{Q}_{k_1}Q_{k_2}
			= \ddt\left( \frac14( E_{k_1}E_{k_2-1}-F_{k_1}F_{k_2-1}) - Q_{k_1+1} Q_{k_2-1}\right)\\
			& + \ddt \left( \frac18 \sum_{j=0}^{k_2-k_1-3}(E_{k_1+j+1}E_{k_2-j-2}-F_{k_1+j+1}F_{k_2-j-2}) - \frac12 \sum_{j=1}^{k_2-k_1-3} Q_{k_1+j+1} Q_{k_2-j-1} \right)\\
			& = \ddt \left( \frac18 \sum_{j=0}^{k_2-k_1-1}(E_{k_1+j}E_{k_2-j-1}-F_{k_1+j}F_{k_2-j-1}) - \frac12 \sum_{j=1}^{k_2-k_1-1} Q_{k_1+j} Q_{k_2-j} \right).
		\end{aligned}
	\end{equation}
\end{proof}

\begin{corollary}\label{lemma1}
	Let $k\geq2$. We have that
	\begin{equation}
		\dot E_k =	 \ddt\left( \sum_{j=2}^k Q_2 Q_{k-j+2} - \frac14 \sum_{j=2}^{k-1}(E_j E_{k-j+1} - F_j F_{k-j+1}) \right).
	\end{equation}
\end{corollary}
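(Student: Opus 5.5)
The plan is to start from the identity $\dot E_k=2Q_2\dot Q_k$ of Lemma \ref{time derivatives} and move the time derivative from $Q_k$ onto $Q_2$, so that Lemma \ref{provo} applies with $k_1=2$, $k_2=k$. I read the first sum in the statement as $\sum_{j=2}^k Q_jQ_{k+2-j}$, consistent with \eqref{def I}; the printed ``$Q_2Q_{k-j+2}$'' appears to be a typo. Indeed, with the printed version the corollary would not yield $\dot\I_k=0$.

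First I would write
\[
\dot E_k = 2Q_2\dot Q_k = 2\ddt\big(Q_2Q_k\big) - 2\dot Q_2\, Q_k .
\]
Since $2\le k$, Lemma \ref{provo} with $(k_1,k_2)=(2,k)$ gives
\[
\dot Q_2 Q_k = \ddt\Big(\tfrac18\sum_{j=0}^{k-3}(E_{2+j}E_{k-j-1}-F_{2+j}F_{k-j-1}) - \tfrac12\sum_{j=1}^{k-3}Q_{2+j}Q_{k-j}\Big).
\]
Substituting this, $\dot E_k$ becomes the time derivative of
\[
2Q_2Q_k+\sum_{j=1}^{k-3}Q_{2+j}Q_{k-j}-\tfrac14\sum_{j=0}^{k-3}(E_{2+j}E_{k-1-j}-F_{2+j}F_{k-1-j}).
\]

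The only remaining work is reindexing, which I expect to be the sole (mild) obstacle. In the $Q$-terms, set $i=2+j$. The middle sum then runs over $i=3,\dots,k-1$ with partner index $k+2-i$. The term $2Q_2Q_k$ supplies exactly the two endpoint contributions $i=2$ and $i=k$, each equal to $Q_2Q_k$. Together these give $\sum_{i=2}^kQ_iQ_{k+2-i}$. In the $E,F$-terms, the same substitution $i=2+j$ gives $i=2,\dots,k-1$ with partner $k+1-i$, which matches the second sum of the statement.

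Finally I would check the degenerate cases separately. For $k=2$ the identity reads $\dot E_2=2Q_2\dot Q_2=\ddt Q_2^2$; this is consistent because the $E,F$ sum is empty and the $Q$ sum reduces to $Q_2^2$. For $k=3$ the middle $Q$-sum is empty and the $E,F$ sum has the single term $j=0$. Both cases agree with the general formula, since Lemma \ref{provo} (or the first line of \eqref{mic}) covers $k_1=k_2$ and $k_2=k_1+1$.
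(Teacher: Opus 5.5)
Your proposal is correct and follows the paper's proof. As the paper does, you start from $\dot E_k=2Q_2\dot Q_k$, rewrite it as $2\ddt(Q_2Q_k)-2\dot Q_2Q_k$, apply Lemma~\ref{provo} with $(k_1,k_2)=(2,k)$ and handle $k=2$ directly; you also rightly read the first sum as $\sum_{j=2}^kQ_jQ_{k+2-j}$, and your $-2\dot Q_2Q_k$ is the correct form of the term the paper misprints as $-2Q_2\dot Q_k$.

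The only inaccuracy is your closing remark. For $k_1=k_2$ the formula of Lemma~\ref{provo} (with empty sums) gives $0$ rather than $\tfrac12\ddt Q_{k_1}^2$, so the case $k=2$ must rest on the first line of \eqref{mic}, which is exactly what your direct check $\dot E_2=\ddt Q_2^2$ does.
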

\begin{proof}
	By \eqref{BDQderivate} we have $\dot E_k = 2Q_2 \dot Q_k$. If $k=2$, then $\dot E_2 = \ddt Q_2^2$, which is the thesis for $k=2$. If $k\geq3$, then
	\[
	\dot E_k = 2Q_2 \dot Q_k = 2\ddt(Q_2 Q_k)- 2 Q_2 \dot Q_k
	\]
	and the thesis follows by applying Lemma \ref{provo} with $k_1=2$, $k_2=k$.
\end{proof}
\begin{proof}[Proof of Proposition \ref{costanti!}]
	The fact that the $\I_k$ defined in \eqref{def I} are constants of motion is a direct consequence of Corollary \ref{lemma1}. The fact that they are independent  is due to the fact that the quadratic parts in $u,v$ are so.
\end{proof}

\begin{remark}
	To give some examples, we list explicitly the first few $\I_k's$:
	\begin{align*}
		\I_2 &= E_2 - Q_2^2 \\
		\I_3 &= E_3 - 2Q_2 Q_3 + \frac{1}{4}\left(E_2^2 - F_2^2\right) \\
		\I_4 &= E_4 - 2Q_2 Q_4 + \frac{1}{2}(E_2 E_3 - F_2 F_3) - Q_3^2 \\
		\I_5 &= E_5 - 2Q_2 Q_5 + \frac{1}{2}(E_2 E_4 - F_2 F_4) - 2Q_3 Q_4 + \frac{1}{4}\left(E_3^2 - F_3^2\right) \\
		\I_6 &= E_6 - 2Q_2 Q_6 + \frac{1}{2}(E_2 E_5 - F_2 F_5) + \frac{1}{2}(E_3 E_4 - F_3 F_4) - 2Q_3 Q_5 - Q_4^2
	\end{align*}
	For instance, as one can explicitly verify, $\I_2$ coincides with the second order conservation law introduced by Pohozaev, while the third order conservation law found by Boiti and Manfrin is $\I_3-\frac14\I_2^2$ in our language.
\end{remark}
It may be convenient to define 
\begin{equation}\label{def:J1}
	\I_{1} = A - B_{1} - 1 = -2H -1.
\end{equation}

\section{Poisson algebra and involution}
The next step is naturally to investigate whether the constants of motion $\I_k$ are in involution. 
\begin{definition}
	Let $\mathscr{P}$ be the ring of finite\footnote{Since we have infinitely many variables we need to specify that we are here considering only finite sums of monomials.}  polynomials in the variables $\{E_k,F_k,Q_k\}_{k\geq 2}, A ,1/A$ defined in \eqref{BDQ}, namely of finite linear combinations of monomials
	\[
	P = \sum_{i=1}^N P_i A^{d_i}\,\prod_{k=2}^M E_k^{a_{i,k}} F_k^{b_{i,k}} Q_k^{c_{i,k}} \,,\quad a_{i,k},b_{i,k},c_{i,k}\in \N,\,d_i\in \Z\,,\quad  P_i\in \R\,.
	\]
	Let $\mathscr C$ be the subring of polynomials in $\mathscr P$ which are constants of motion for \eqref{KP}
	.\end{definition}

%
\begin{proposition}\label{involuz}
	The space $\mathscr{P}$ is a Poisson algebra, moreover $\mathscr{C}$ is generated\footnote{Namely any element of $\mathscr{C}$ is expressed as a polynomial in the $\J_k$.}  by  $(\I_k)_{k\geq 2}$.
	Finally, for any $k_1,k_2\geq 1$ we have 
	\[
	\{\I_{k_1},\I_{k_2}\} =0\,.
	\]
\end{proposition}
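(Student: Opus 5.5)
The plan has three parts: compute the brackets of the basic quantities, characterise $\mathscr C$ as the kernel of the derivation $\{\cdot,H\}$, and then get involution from a parity argument under $v\mapsto -v$.

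\emph{Poisson algebra.} Throughout, write $D_m:=\int_{\T^n} u\,(-\Delta)^m u\,dx$ and similarly for $B_m,Q_m$; this agrees with \eqref{BDQ} after integration by parts. The $L^2$-gradients are
\[
\nabla_u D_j=2(-\Delta)^j u,\qquad \nabla_v B_k=2(-\Delta)^{k-1}v,\qquad \nabla_u Q_k=(-\Delta)^{k-1}v,\qquad \nabla_v Q_k=(-\Delta)^{k-1}u .
\]
From these one computes
\[
\{D_j,B_k\}=4Q_{j+k},\qquad \{D_j,Q_k\}=2D_{j+k-1},\qquad \{Q_j,B_k\}=2B_{j+k-1},
\]
\[
\{Q_j,Q_k\}=\{D_j,D_k\}=\{B_j,B_k\}=0 .
\]
Hence the brackets of the $D,B,Q$ are again linear in $D,B,Q$, with the indices adding. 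The remaining generators are handled as follows:
\begin{itemize}
\item $A=(1+D_1)^{-1}$, and its brackets follow from the chain rule, e.g. $\{A,G\}=-A^2\{D_1,G\}$.
\item $D_1=A^{-1}-1$ lies in $\mathscr P$.
\item $B_k=\tfrac A2(E_k+F_k)$ and $D_k=\tfrac1{2A}(E_k-F_k)$, so every bracket of two generators of $\mathscr P$ is a polynomial in $A^{\pm1},E_k,F_k,Q_k$.
\end{itemize}
The Leibniz rule then extends this to all of $\mathscr P$, so $\mathscr P$ is closed under the bracket. The only care needed is in checking that the indices produced stay $\ge 2$, or reduce to $A$ when the index is $1$.

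\emph{Generation of $\mathscr C$.} This is the step I expect to be the main obstacle. The first point is that the functions $A,E_k,F_k,Q_k$ are algebraically independent on the infinite-dimensional phase space. I would check this by choosing finitely many Fourier modes with distinct $|j|^2$. Granting that, $\mathscr C$ is the kernel of the derivation $L=\{\cdot,H\}$ acting on the free generators by Lemma \ref{time derivatives}:
\[
L(A)=-2A^2Q_2,\qquad L(Q_k)=AF_k,\qquad L(E_k)=2AQ_2F_k,\qquad L(F_k)=2A(Q_2E_k-2Q_{k+1}).
\]
Next I change generators, replacing each $E_k$ by $\I_k$. This change is triangular: $\I_k=E_k+{}$terms involving only lower $E_j$, $F_j$, $Q_j$ and the quadratic $Q_2Q_k$-type terms. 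So $\mathscr P$ is the free ring in $A^{\pm1}$, $\I_k$, $F_k$, $Q_k$, and $L(\I_k)=0$. Take $P\in\ker L$ and let $m$ be the largest index of an $F$ or $Q$ occurring in $P$. Then:
\begin{itemize}
\item The variable $Q_{m+1}$ appears in $L(P)$ only through $-4A\,\partial_{F_m}P$. Hence $\partial_{F_m}P=0$.
\item After that, $F_m$ appears in $L(P)$ only through $AF_m\,\partial_{Q_m}P$. Hence $\partial_{Q_m}P=0$ as well.
\end{itemize}
Descending in $m$, $P$ depends only on $A$ and the $\I_k$. Finally $L(P)=-2A^2Q_2\,\partial_AP=0$ forces $\partial_AP=0$. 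The base case $m=2$ needs separate attention, because $Q_2$ also enters $L(E_k)$ and $L(A)$. To handle it, I would rewrite everything in the $\I$-coordinates and compare coefficients of the highest power of $F_2$.

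\emph{Involution.} Consider the map $\sigma:(u,v)\mapsto(u,-v)$. It is anti-symplectic, so $\{F\circ\sigma,G\circ\sigma\}=-\{F,G\}\circ\sigma$. Under $\sigma$ the quantities $A,E_k,F_k$ are even and $Q_k$ is odd, so each $\I_k$, being quadratic in the $Q$'s, is even. Now fix $k_1,k_2\ge 2$. The function $\{\I_{k_1},\I_{k_2}\}$ belongs to $\mathscr P$, and by the Jacobi identity it is a constant of motion, so it lies in $\mathscr C$. By the generation result it is a polynomial in the $\I_k$, hence even under $\sigma$. On the other hand, by anti-symplecticity it is odd under $\sigma$. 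Therefore it vanishes. The case $k_1=1$ or $k_2=1$ is immediate: $\I_1=-2H-1$ and every $\I_k$ Poisson-commutes with $H$.
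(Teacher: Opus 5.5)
Your proposal is correct and follows essentially the same route as the paper. Closure comes from the basic bracket relations. Generation of $\mathscr C$ uses the triangular substitution $E_k\mapsto \I_k$ followed by top-index elimination: $F_m$ is removed via the coefficient of $Q_{m+1}$, then $Q_m$ via the coefficient of $F_m$. Involution comes from the $Q$-parity argument, which you phrase cleanly through the anti-symplectic involution $(u,v)\mapsto(u,-v)$.

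Your explicit check of the algebraic independence of $A,E_k,F_k,Q_k$ supplies a point the paper uses tacitly. Your worry about the base case $m=2$ is unnecessary: the argument only extracts the coefficients of $Q_{m+1}=Q_3$ and of $F_2$, and neither is affected by the $Q_2$ appearing in $L(A)$ and $L(F_h)$.
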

\begin{remark}
	We mention that the involution property of the $\I_k$ is proved also in \cite{campos}, see Proposition 3.2. Their proof relies on the construction of a generating function and on direct algebraic computations. Ours is more based on the Poisson algebra structure of $\mathscr{P}$. In fact it seems to us that our statement is slightly stronger, since Proposition \ref{involuz} amounts to saying that any constant of motion which is a finite polynomial in $\{E_k,F_k,Q_k\}_{k\geq 2}, A ,1/A$ can be expressed as a polynomial in the $\I_k$. This applies for instance to the  constants of motion of Boiti Manfrin.
\end{remark}

%

We now compute the Poisson brackets between the building blocks defined in \ref{BDQ}.
\begin{lemma}\label{relations1}
The following  commutation relations hold:
\begin{equation}
	\begin{aligned}
		&\{D_{k_1},D_{k_2}\}=0,\quad\quad&\{B_{k_1},B_{k_2}\}=0\quad\quad&\{Q_{k_1},Q_{k_2}\}=0\\
		&\{D_{k_1},B_{k_2}\}=4Q_{k_1+k_2},\quad\quad&\{D_{k_1},Q_{k_2}\}=2D_{k_1+k_2-1}\quad\quad&\{B_{k_1},Q_{k_2}\}=-2B_{k_1+k_2-1}\\
		&\{A,D_{k}\}=0,\quad\quad&\{A,B_{k}\}=-4A^2Q_{k+1}\quad\quad&\{A,Q_{k}\}=-2A^2D_k
	\end{aligned}
\end{equation}
\end{lemma}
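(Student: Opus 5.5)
The plan is to compute everything directly from the definition \eqref{def Poisson} of the Poisson bracket, once the $L^2$-gradients of the building blocks are written in closed form. The key observation is that each of $D_k,B_k,Q_k$ is a quadratic form in $(u,v)$ built from a power of the nonnegative self-adjoint operator $-\Delta$. Integrating by parts on $\T^n$ (which has no boundary), and treating the even and odd cases $\nabla^{2m}=\Delta^m$, $\nabla^{2m+1}=\nabla\Delta^m$ separately, one checks that
\[
D_k=\int_{\T^n} u\,(-\Delta)^k u\,dx,\qquad B_k=\int_{\T^n} v\,(-\Delta)^{k-1} v\,dx,\qquad Q_k=\int_{\T^n} u\,(-\Delta)^{k-1} v\,dx .
\]
Equivalently, in Fourier variables \eqref{fourier} these read $D_k=\sum_j |j|^{2k}|u_j|^2$, $B_k=\sum_j|j|^{2k-2}|v_j|^2$ and $Q_k=\sum_j |j|^{2k-2}u_j\bar v_j$. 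The Fourier form also settles the regularity issues: all the quantities appearing in a given identity are well defined once $s$ is large compared with the indices involved, and the identities are then verified pointwise on such $(u,v)$.

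From these formulas, and using that $(-\Delta)^m$ is self-adjoint, the gradients are
\[
\nabla_u D_k=2(-\Delta)^k u,\quad \nabla_v D_k=0,\quad \nabla_u B_k=0,\quad \nabla_v B_k=2(-\Delta)^{k-1}v,
\]
\[
\nabla_u Q_k=(-\Delta)^{k-1}v,\qquad \nabla_v Q_k=(-\Delta)^{k-1}u .
\]
Substituting into \eqref{def Poisson} gives the brackets one by one.

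\begin{itemize}
\item $\{D_{k_1},D_{k_2}\}=0$ and $\{B_{k_1},B_{k_2}\}=0$ hold because in each term one of the two gradients vanishes.
\item $\{Q_{k_1},Q_{k_2}\}=\int (-\Delta)^{k_1-1}v\,(-\Delta)^{k_2-1}u-(-\Delta)^{k_1-1}u\,(-\Delta)^{k_2-1}v=0$. Moving all powers of $-\Delta$ onto $u$ by self-adjointness shows that the two integrals coincide.
\item $\{D_{k_1},B_{k_2}\}=4\int (-\Delta)^{k_1}u\,(-\Delta)^{k_2-1}v=4\int u(-\Delta)^{k_1+k_2-1}v=4Q_{k_1+k_2}$.
\item $\{D_{k_1},Q_{k_2}\}=2\int (-\Delta)^{k_1}u\,(-\Delta)^{k_2-1}u=2D_{k_1+k_2-1}$.
\item $\{B_{k_1},Q_{k_2}\}=-2\int(-\Delta)^{k_1-1}v\,(-\Delta)^{k_2-1}v=-2B_{k_1+k_2-1}$.
\end{itemize}
In each case the indices are matched by collecting the total power of $-\Delta$ and comparing with the definitions above.

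For the last row I would use the chain rule, since $A=(1+D_1)^{-1}$ is a function of $D_1$ alone and therefore $\{A,G\}=-A^2\{D_1,G\}$ for every $G$. This gives:
\begin{itemize}
\item $\{A,D_k\}=0$;
\item $\{A,B_k\}=-4A^2Q_{k+1}$;
\item $\{A,Q_k\}=-2A^2D_k$.
\end{itemize}
As a consistency check, these last two formulas reproduce $\dot A=\{A,H\}=-2A^2Q_2$ from Lemma \ref{time derivatives}, using $H=\frac12(B_1-A)$.

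There is no real obstacle in this argument. The only step requiring care is the index bookkeeping: one must verify the identification $\|\nabla^k u\|^2=\int u(-\Delta)^k u$ in both the even and odd cases of $k$, and then match the total power of $-\Delta$ to the correct $D$, $B$ or $Q$ index, keeping in mind that $B_k$ and $Q_k$ carry $k-1$ powers while $D_k$ carries $k$.
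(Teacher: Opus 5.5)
Your proposal is correct and takes the same route as the paper, which only says the lemma is a direct computation. Your gradients $\nabla_u D_k=2(-\Delta)^k u$, $\nabla_v B_k=2(-\Delta)^{k-1}v$, $\nabla_u Q_k=(-\Delta)^{k-1}v$, $\nabla_v Q_k=(-\Delta)^{k-1}u$ and the chain rule $\{A,G\}=-A^2\{D_1,G\}$ carry out that computation explicitly, with all index matchings and the check $\dot A=-2A^2Q_2$ right.
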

\begin{proof}
This is a simple direct computation.
\end{proof}
As a consequence we show that $\mathscr{P}$ is closed under Poisson brackets.
\begin{lemma}[Algebra Rules]\label{algebra}	We have the following commutation rules: 
\begin{align*}
	\{E_{k_1},E_{k_2}\} &= 4F_{k_1}Q_{k_2+1}-4F_{k_2}Q_{k_1+1} \,,\quad\qquad\quad\quad\;\;
	\{E_{k_1},Q_{k_2}\} = F_{k_1}\big(E_{k_2}-F_{k_2}\big)-2F_{k_1+k_2-1} 
	\\[4pt]
	\{E_{k_1},F_{k_2}\} &= 8Q_{k_1+k_2}+4F_{k_1}Q_{k_2+1}-4E_{k_2}Q_{k_1+1} \,,\quad 
	\{F_{k_1},F_{k_2}\} = 4E_{k_1}Q_{k_2+1}-4E_{k_2}Q_{k_1+1} 
	\\[4pt]
	\{F_{k_1},Q_{k_2}\} &= E_{k_1}\big(E_{k_2}-F_{k_2}\big)-2E_{k_1+k_2-1} 
\end{align*}
moreover
\begin{align}
	\{A,E_{k}\} &= \{A,F_{k}\} = -4A\,Q_{k+1} ,\quad 
	\{A,Q_{k}\} =  -A\big(E_{k}-F_{k}\big)\,. 
\end{align}
\end{lemma}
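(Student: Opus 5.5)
The bracket $\{\cdot,\cdot\}$ is a biderivation, so every rule in the statement follows from Lemma \ref{relations1} by the Leibniz rule once $E_k,F_k$ are written in terms of $A,B_k,D_k$. The only auxiliary identities needed are the brackets with $1/A$. From $\{A,\cdot\}$ in Lemma \ref{relations1} we get $\{1/A,G\}=-A^{-2}\{A,G\}$, hence
\[
\{1/A,D_k\}=0,\qquad \{1/A,B_k\}=4Q_{k+1},\qquad \{1/A,Q_k\}=2D_k .
\]
We also need $\{A,A\}=0$. Throughout, we use the substitutions $B_k=\tfrac{A}{2}(E_k+F_k)$ and $AD_k=\tfrac12(E_k-F_k)$ to return to the variables $E,F,Q,A$.

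First we treat the brackets involving $A$. Since $\{A,D_k\}=0$,
\[
\{A,E_k\}=\frac{1}{A}\{A,B_k\}+B_k\{A,1/A\}+D_k\{A,A\}=\frac1A(-4A^2Q_{k+1})=-4AQ_{k+1},
\]
and the computation for $F_k$ is identical. For $Q_k$ we have $\{A,Q_k\}=-2A^2D_k=-A(E_k-F_k)$.

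Next we expand $\{E_{k_1},Q_{k_2}\}$:
\[
\{E_{k_1},Q_{k_2}\}=\frac1A\{B_{k_1},Q_{k_2}\}+B_{k_1}\{1/A,Q_{k_2}\}+A\{D_{k_1},Q_{k_2}\}+D_{k_1}\{A,Q_{k_2}\}.
\]
Substituting, this equals
\[
-\tfrac{2}{A}B_{k_1+k_2-1}+2B_{k_1}D_{k_2}+2AD_{k_1+k_2-1}-2A^2D_{k_1}D_{k_2}.
\]
The first and third terms combine to $-2F_{k_1+k_2-1}$. The second and fourth combine to $2AD_{k_2}(B_{k_1}/A-AD_{k_1})=F_{k_1}(E_{k_2}-F_{k_2})$. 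This is the stated rule. The rule for $\{F_{k_1},Q_{k_2}\}$ comes from the same four terms with the signs of the $D_{k_1}$ contributions flipped, which gives $E_{k_1}(E_{k_2}-F_{k_2})-2E_{k_1+k_2-1}$.

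For the $E$–$F$–$E$ brackets we write $E_k=X_k+Y_k$ and $F_k=X_k-Y_k$ with $X_k=B_k/A$ and $Y_k=AD_k$. It suffices to compute $\{X_{k_1},X_{k_2}\}$, $\{Y_{k_1},Y_{k_2}\}$ and $\{X_{k_1},Y_{k_2}\}$. Clearly $\{Y,Y\}=0$, since $A$ and all $D$'s commute.

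For $\{X_{k_1},X_{k_2}\}$ the $\{B,B\}$ term vanishes, leaving
\[
\{X_{k_1},X_{k_2}\}=\frac{B_{k_1}}{A}\cdot 4Q_{k_2+1}-\frac{B_{k_2}}{A}\cdot 4Q_{k_1+1}=4X_{k_1}Q_{k_2+1}-4X_{k_2}Q_{k_1+1}.
\]

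The mixed bracket $\{X_{k_1},Y_{k_2}\}$ is the term where the $Q_{k_1+k_2}$ contribution appears. It splits into $\tfrac1A\cdot A\{B_{k_1},D_{k_2}\}$ plus the contributions of $\{B,A\}$ and $\{1/A,D\}$; the latter vanishes. The result is
\[
\{X_{k_1},Y_{k_2}\}=-4Q_{k_1+k_2}+4AD_{k_2}Q_{k_1+1}=-4Q_{k_1+k_2}+4Y_{k_2}Q_{k_1+1}.
\]

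Assembling these via bilinearity gives $\{E,E\}$, $\{E,F\}$ and $\{F,F\}$. Here the antisymmetry $\{Y_{k_1},X_{k_2}\}=-\{X_{k_2},Y_{k_1}\}$ makes the $Q_{k_1+k_2}$ terms cancel in $\{E,E\}$ and $\{F,F\}$, while they add up to $8Q_{k_1+k_2}$ in $\{E,F\}$.

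The only real obstacle is bookkeeping: getting the signs in this last assembly right, and re-expressing $X,Y$ as $\tfrac12(E\pm F)$ in the final formulas, for example $4(X_{k_1}-Y_{k_1})Q_{k_2+1}=4F_{k_1}Q_{k_2+1}$. As a consistency check, the rule for $\{E_{k_1},F_{k_2}\}$ must reproduce $\dot F_k$ of Lemma \ref{time derivatives}, via $H=\tfrac12(AE_1)-A+\tfrac12A^2$ with $k_1=1$.
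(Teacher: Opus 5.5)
Your proposal is correct and takes essentially the paper's route. You expand by the Leibniz rule into the blocks $B_k/A$ and $AD_k$ and use Lemma \ref{relations1} together with $\{1/A,\cdot\}=\{D_1,\cdot\}$. Your $X_k=B_k/A$, $Y_k=AD_k$ bookkeeping is the same four-block computation the paper writes out for $\{E_{k_1},F_{k_2}\}$, organized so that $\{E,E\}$, $\{F,F\}$ and the $Q$- and $A$-brackets (which the paper leaves as ``direct computation'') follow at once; I checked your signs and the final formulas, as well as your $\dot F_k$ consistency check, and they are right.
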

\begin{proof}
This is again a direct computation. Let us briefly prove the first equality. We expand by bilinearity:
\begin{equation}
	\{E_{k_1}, F_{k_2}\} = \left\{ \frac{B_{k_1}}{A}, \frac{B_{k_2}}{A} \right\} - \left\{ \frac{B_{k_1}}{A}, A D_{k_2} \right\} + \left\{ A D_{k_1}, \frac{B_{k_2}}{A} \right\} - \{A D_{k_1}, A D_{k_2}\}
\end{equation}

We evaluate the individual blocks using the Leibniz rule and the fundamental relations:
\begin{enumerate}
	
	\item \textbf{The $\{B, B\}$ block:} Since $\{B_{k_1}, B_{k_2}\} = 0$ and $\left\{\frac{1}{A}, B_k\right\} = 4Q_{k+1}$, we obtain:
	\begin{equation}
		\left\{ \frac{B_{k_1}}{A}, \frac{B_{k_2}}{A} \right\} = \frac{B_{k_1}}{A}(4Q_{k_2+1}) - \frac{B_{k_2}}{A}(4Q_{k_1+1})
	\end{equation}
	
	\item \textbf{The cross $\{B, D\}$ blocks:} Computing the first term, given that $\{B_{k_1}, D_{k_2}\} = -4Q_{k_1+k_2}$ and $\left\{\frac{B_{k_1}}{A}, A\right\} = 4AQ_{k_1+1}$:
	\begin{equation}
		\left\{ \frac{B_{k_1}}{A}, A D_{k_2} \right\} = A\left\{\frac{B_{k_1}}{A}, D_{k_2}\right\} + D_{k_2}\left\{\frac{B_{k_1}}{A}, A\right\} = -4Q_{k_1+k_2} + 4AD_{k_2}Q_{k_1+1}
	\end{equation}
	By antisymmetry, swapping indices yields the second term:
	\begin{equation}
		\left\{ A D_{k_1}, \frac{B_{k_2}}{A} \right\} = 4Q_{k_1+k_2} - 4AD_{k_1}Q_{k_2+1}
	\end{equation}
	\item \textbf{The $\{D, D\}$ block:}  is clearly equal to zero.
\end{enumerate}

Substituting these four relations back into the initial expansion:
\begin{equation}
	\{E_{k_1}, F_{k_2}\} = 8Q_{k_1+k_2} + 4\left(\frac{B_{k_1}}{A} - AD_{k_1}\right)Q_{k_2+1} - 4\left(\frac{B_{k_2}}{A} + AD_{k_2}\right)Q_{k_1+1}
\end{equation}

Finally, recognizing the original definitions $F_{k_1} = \frac{B_{k_1}}{A} - AD_{k_1}$ and $E_{k_2} = \frac{B_{k_2}}{A} + AD_{k_2}$, we arrive at the general identity:
\begin{equation}
	\{E_{k_1}, F_{k_2}\} = 8Q_{k_1+k_2} + 4F_{k_1}Q_{k_2+1} - 4E_{k_2}Q_{k_1+1}
\end{equation}
\end{proof}
\begin{proof}[Proof of Proposition \ref{involuz}] The fact that $\mathscr P$ is closed under Poisson brackets is a direct consequence of Lemma \ref{algebra}. 
Now we note that $\I_k\in \mathscr C$ and moreover we may recusively invert each relation so that  that, for any $i\geq 2$
\[
E_i= P_i(\I_2,\dots,\I_i,F_2,\dots, F_{i-1},Q_2,\dots,Q_i)\,.
\]
for some polynomial $P_i$. 
Consider  a polynomial constant of motion
\[
g(Q_2,\dots, Q_{h}, F_2,\dots, F_{h}, E_2,\dots, E_h,,A,1/A)\in \mathscr C\,.
\]
Substituting the $E_i$, there must exist a finite polynomial  $f$ in the variables  $F,Q,\I,A,1/A$ such that 
\[
\begin{aligned}
	f(Q_2,\dots, Q_{h}, F_2,&\dots, F_{h},\I_1,\dots, \I_{h},A,1/A)\\
	&= g(Q_2,\dots, Q_{h}, F_2,\dots, F_{h}, P_2(\I,F,Q),\dots, P_h(\I,F,Q),A,1/A)
\end{aligned}
\]
Let us now denote by $k_1$ the largest $h\geq 2$ such that $f$ depends non-trivially on $Q_h$; similarly let us denote by  $k_2$ the largest $h\geq 2$ such that $f$ depends non-trivially on $F_h$.
By definition $f$ is a constant of motion, so 
\[
0= \frac d{dt}f = \partial_{Q_{k_1}} f  \dot Q_{k_1} +
\partial_{F_{k_2}} f  \dot F_{k_2} + \mbox{other terms}\,.
\]
Now, if $k_2\geq k_1$ the only term in the expression above which contains $ Q_{k_2+1}$ is
$-4 A Q_{k_2+1}\partial_{F_{k_2}} f$, which in turn means that  $\partial_{F_{k_2}} f$ must be zero and $f$ cannot depend on $F_{k_2}$.
In the same way if $k_1>k_2$ then
\[
\frac d{dt}f=\partial_{Q_{k_1}} f A F_{k_1} + \mbox{other terms} =0
\]  
by construction all the other terms cannot contain $F_{k_1}$ so again we must have $\partial_{Q_{k_1}} f =0$ and hence a trivial dependence on $Q_{k_1}$.
This is only possible if $f$ does not depend on $F,Q$.
Now suppose that $f= P(\I,A,1/A)$ is a constant of motion. Passing to the common denominator, we have  that $P(A,1/A)=F(A)/A^M$, where  $F$  is a polynomial and $M$ a non-negative integer. Then
$$
\frac{d}{dt}P(A,1/A)= \frac{d}{dt}\left( \frac{F(A)}{A^M} \right) = \frac{\dot A}{A^{M+1}}\left(AF'(A)-MF(A)\right)=0,
$$
so that  $AF'(A)=MF(A)$, thus $F(A)=\alpha A^M$ and $P(A,1/A)$ does not depend on $A$.
This proves that any $g\in \mathscr{C}$ is in fact a finite polynomial in the $\I_k$.

\smallskip
Regarding the last statement in the Proposition,
the fact that all the $\I_k$ are in involution with $\I_1$, defined in \eqref{def:J1}, is the definition of constant of motion. 
\\
To prove that $\{\I_{k_1},\I_{k_2}\} =0$  with $k_i\geq 2$ we proceed by contradiction. Suppose that for some $k_1,k_2$ we have
$g(E,F,Q)= \{\I_{k_1},\I_{k_2}\} \not\equiv 0$. By the Poisson algebra property and Jacobi identity  we have that $g\in \mathscr{C}$ hence, as we proved above, we must have $g(E,F,Q)= f(\I(E,F,Q)) $ for some polynomial $f$. Since all the $\I_k$ contain only even powers of the variables $Q_i$ but, by Lemma \ref{involuz}, $g$ is linear in $Q$, we obtain a contradiction.   
\end{proof}

\section{Generating functions}\label{gege}
Once one has a family of constants of motion in involution, it is natural to associate to them a generating function. In this section we construct this function at a purely formal level, without discussing its domain of definition and its regularity. 
\begin{equation}\label{EFQlambda}
	D(\lambda):= \sum_{k=2}^\infty \lambda^{k-1} D_k, \quad B(\lambda):= \sum_{k=2}^\infty \lambda^{k-1} B_k\,,
	\quad Q(\lambda):= \sum_{k=2}^\infty \lambda^{k-1} Q_k.
\end{equation} and coherently
\[
\I(\lambda):=\sum_{k=2}^\infty \lambda^{k-1} \I_k, \quad E(\lambda):= \sum_{k=2}^\infty \lambda^{k-1} E_k, \quad F(\lambda):= \sum_{k=2}^\infty \lambda^{k-1} F_k\,.
\]
Note that the formula above corresponds to formula (3.5) of \cite{campos} (provided that $\lambda\to -z$).
\begin{lemma}\label{lemma generating function}
	The following identity holds:
	\begin{equation}\label{generating function}
		\begin{aligned}
			\I(\lambda)&=E(\lambda)-\frac{1}{\lambda}Q^2(\lambda)+\frac{1}{4}(E^2(\lambda)-F^2(\lambda)) \\ &= E(\lambda) - \frac{1}{\lambda}Q^2(\lambda) + D(\lambda)B(\lambda)= (D(\lambda)+ \frac1A)(B(\lambda) +A) -\frac1\lambda Q^2-1
		\end{aligned}
	\end{equation}
\end{lemma}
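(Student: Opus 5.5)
The plan is to verify the identity coefficient by coefficient, treating all series as formal power series in $\lambda$; no convergence issues arise, since each coefficient involves only finitely many of the quantities in \eqref{BDQ}. I will first establish the first line of \eqref{generating function} by multiplying \eqref{def I} by $\lambda^{k-1}$ and summing over $k\geq 2$. The second and third expressions then follow by pure algebra from the definitions of $E_k,F_k$.

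\textbf{First line.} The leading term gives $\sum_k \lambda^{k-1}E_k=E(\lambda)$.

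For the quadratic $Q$-term I expand
\[
Q^2(\lambda)=\sum_{a,b\geq 2}\lambda^{a+b-2}Q_aQ_b .
\]
Setting $a=j$ and $b=k+2-j$ with $2\le j\le k$, the exponent becomes $a+b-2=k$. Hence
\[
\sum_{k\ge2}\lambda^{k-1}\sum_{j=2}^kQ_jQ_{k+2-j}=\frac1\lambda Q^2(\lambda).
\]

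For the $E,F$ term I similarly write $E^2(\lambda)=\sum_{a,b\ge2}\lambda^{a+b-2}E_aE_b$ and set $a=j$, $b=k+1-j$ with $2\le j\le k-1$. Now $a+b-2=k-1$, which is exactly the required power. The range $2\le j\le k-1$ together with $a,b\geq 2$ covers every pair with $a+b=k+1$, and for $k=2$ the sum is empty, consistent with $a+b\ge4$. The same computation applies to $F^2(\lambda)$, giving the term $\frac14(E^2(\lambda)-F^2(\lambda))$.

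\textbf{Remaining equalities.} Since the factor $A$ does not depend on $k$, the definitions in \eqref{BDQ} lift to the series:
\[
E(\lambda)=\frac{B(\lambda)}{A}+AD(\lambda),\qquad F(\lambda)=\frac{B(\lambda)}{A}-AD(\lambda).
\]
Therefore
\[
E^2(\lambda)-F^2(\lambda)=(E-F)(E+F)=2AD(\lambda)\cdot \frac{2B(\lambda)}{A}=4D(\lambda)B(\lambda),
\]
which gives the second expression. For the third, I expand
\[
\Big(D(\lambda)+\frac1A\Big)\big(B(\lambda)+A\big)-1=D(\lambda)B(\lambda)+AD(\lambda)+\frac{B(\lambda)}{A}=D(\lambda)B(\lambda)+E(\lambda),
\]
and subtracting $\frac1\lambda Q^2$ yields the last expression.

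The only delicate point is the index bookkeeping in the first line, namely matching the ranges of $j$ with the exponents of $\lambda$; the rest is immediate.
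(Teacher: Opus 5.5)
Your proof is correct and takes the same approach as the paper, which matches the coefficients of $\lambda^{k-1}$ on both sides to recover the defining formula for $\I_k$. You additionally spell out the index bookkeeping and the elementary algebra ($E^2-F^2=(E-F)(E+F)=4DB$ and the expansion of $(D+\frac1A)(B+A)-1=DB+E$) that the paper leaves implicit.
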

\begin{proof}
	By construction taking the terms of homogeneous degree $k-1$ on both sides of the equation.
	\[
	\I_k = E_k +\frac14 \sum_{\substack{k_1,k_2\geq 2\\ k_1+k_2 = k+1}} E_{k_1} E_{k_2} - F_{k_1} F_{k_2} - 
	\sum_{\substack{k_1,k_2\geq 2\\ k_1+k_2 = k+2} }Q_{k_1} Q_{k_2} 
	\]	
	which is exactly \eqref{def I}.
\end{proof}
Note that the building blocks of the generating function can be expressed at least formally in terms of pseudo-differential operators
\begin{equation}
	\label{cita}
	D(\lambda) = \sum_{j \neq 0} \frac{\lambda |j|^4}{1 - \lambda |j|^2} |u_j|^2\,,\quad B(\lambda) = \sum_{j \neq 0} \frac{\lambda |j|^2}{1 - \lambda |j|^2} |v_j|^2\,,\quad  Q(\lambda) = \sum_{j \neq 0} \frac{\lambda |j|^2}{1 - \lambda |j|^2}  u_j \bar v_j
\end{equation}
so in conclusion
\[
\I(\lambda)= (\sum_{j\ne 0}  \frac{ |j|^2}{1 - \lambda |j|^2} |u_j|^2+1)(\sum_{j \neq 0}  \frac{\lambda |j|^2}{1 - \lambda |j|^2} |v_j|^2 +A) -\frac1\lambda (\sum_{j \neq 0} \frac{\lambda |j|^2}{1 - \lambda |j|^2} u_j \bar v_j)^2 -1\,.
\]
It may be convenient to set $\mathcal J(\lambda):= \I(\lambda)-\I_1$, where $\I_1=A-B_1-1$ is defined in \eqref{def:J1},  so that 
\[
\begin{aligned}
	\mathcal J(\lambda)&=\left( \sum_{j \neq 0} \frac{|j|^2}{1 - \lambda |j|^2} |u_j|^2 \right) \left( \sum_{j \neq 0} \frac{\lambda |j|^2}{1 - \lambda |j|^2} |v_j|^2 + A \right) + \sum_{j \neq 0} \frac{1}{1 - \lambda |j|^2} |v_j|^2 - \frac{1}{\lambda} \left( \sum_{j \neq 0} \frac{\lambda |j|^2}{1 - \lambda |j|^2} u_j \bar{v}_j \right)^2\\
	=& \sum_{j \neq 0}  \frac{ A |j|^2  |u_j|^2+|v_j|^2}{1 - \lambda |j|^2} + \lambda \left[\left( \sum_{j \neq 0} \frac{|j|^2}{1 - \lambda |j|^2} |u_j|^2 \right) \left( \sum_{j \neq 0} \frac{ |j|^2}{1 - \lambda |j|^2} |v_j|^2 \right) - \left( \sum_{j \neq 0} \frac{ |j|^2}{1 - \lambda |j|^2} u_j \bar{v}_j \right)^2\right]
	\\= & \sum_{j \neq 0}  \frac{ A |j|^2  |u_j|^2+|v_j|^2}{1 - \lambda |j|^2} + \frac{\lambda}{2} \sum_{j \neq 0} \sum_{k \neq 0} \frac{|j|^2 |k|^2}{(1 - \lambda |j|^2)(1 - \lambda |k|^2)} |u_j v_k - u_k v_j|^2\\
	&= \sum_{j \neq 0} \frac{1}{1 - \lambda |j|^2} \left[ A |j|^2 |u_j|^2 + |v_j|^2 + \sum_{\substack{k \neq 0 \\ |k|^2 \neq |j|^2}} \frac{|j|^2 |k|^2}{|j|^2 - |k|^2} |u_j v_k - u_k v_j|^2 \right] \\
	&\quad + \frac{\lambda}{2} \sum_{\substack{j, k \neq 0 \\ |j|^2 = |k|^2}} \frac{|j|^4}{(1 - \lambda |j|^2)^2} |u_j v_k - u_k v_j|^2=\sum_{\nu \in \Lambda} \left[ \frac{1}{1 - \lambda \nu} \mathcal I_\nu^{(1)} + \frac{\lambda \nu^2}{(1 - \lambda \nu)^2} \mathcal I_\nu^{(2)} \right]
\end{aligned}
\]
Taylor expanding at $\lambda=0$ we obtain at least at a formal level the relations \eqref{calIJ}.
We have formally recovered the constants of motion $\cI_\nu^{(i)}$ from the generating function.

\medskip


\subsection{Expansion of the generating function at infinity.}
As we have seen in formula  \ref{cita}, we may write at least at the formal level
\[
D(\lambda) = \sum_{j \neq 0} \frac{\lambda |j|^4}{1 - \lambda |j|^2} |u_j|^2 \,,\quad B(\lambda) = \sum_{j \neq 0} \frac{\lambda |j|^2}{1 - \lambda |j|^2} |v_j|^2 \,,\quad Q(\lambda) = \sum_{j \neq 0} \frac{\lambda |j|^2}{1 - \lambda |j|^2}{ u_j \bar v_j }
\]
so we may apply the formal change of variables $\Phi:\lambda\to \kappa= 1/\lambda$, we set
\[
\tilde D(\kappa) = -D(\frac1\kappa) = \sum_{j \neq 0} \frac{ |j|^2}{1 - \kappa |j|^{-2}} |u_j|^2 \,,\quad \tilde B(\kappa) =  \sum_{j \neq 0} \frac{ 1}{1 - \kappa |j|^{-2}} |v_j|^2 \,,\quad  \tilde Q(\kappa) =  \sum_{j \neq 0} \frac{ 1}{1 - \kappa |j|^{-2}} u_j\bar v_j 
\]
so that
\[
\tilde \I(\kappa)=\I(\frac1\kappa) = - A \tilde D(\kappa) -\frac{\tilde B(\kappa)}{A} + \tilde D(\kappa) \tilde B(\kappa) -\kappa \tilde Q(\kappa)^2
\]
Now we Taylor expand at $\kappa=0$ to obtain constants of motion involving the negative Sobolev norms.
At the formal level we have 
\[
\hat D(\kappa) =\sum_{h=-1}^\infty \kappa^{h+1} D_{-h}\,, \hat B(\kappa) =\sum_{h=-1}^\infty \kappa^{h+1} B_{-h}\,,\quad \hat Q(\kappa) =\sum_{h=-1}^\infty \kappa^{h+1} Q_{-h}
\]
where we have extended in the trivial way the building blocks $D_k,B_k,Q_k$  defined in \ref{BDQ} to negative values of $k$.
Then, for any $h\geq -1$ one has that
\begin{equation}\label{Jnegative}
	\I_{-h}:= - A  D_{-h} -\frac{ B_{-h}}{A} + \sum_{\substack{h_1,h_2\geq -1\\ h_1+h_2= h-1}}  D_{-h_1}B_{-h_2} -
	\sum_{\substack{h_1,h_2\geq -1\\ h_1+h_2= h-2}} Q_{-h_1} Q_{-h_2} 
\end{equation}
are constants of motion.\\
In the case $h>-1$, by expanding $B_{-h}/A$ as $B_{-h}+D_1B_{-h}$ and the second summation in \eqref{Jnegative} as
\begin{equation*}
	\sum_{\substack{h_1,h_2\geq -1\\ h_1+h_2= h-1}}  D_{-h_1}B_{-h_2}=\sum_{\substack{h_1,h_2\geq 0\\ h_1+h_2= h-1}}  D_{-h_1}B_{-h_2}+
	D_1B_{-h}+D_{-h}B_1
\end{equation*}
we can write
\begin{equation*}
	\I_{-h}= (B_1-A)D_{-h} -B_{-h} + \sum_{\substack{h_1,h_2\geq 0\\ h_1+h_2= h-1}}  D_{-h_1}B_{-h_2} -
	\sum_{\substack{h_1,h_2\geq -1\\ h_1+h_2= h-2}} Q_{-h_1} Q_{-h_2}\\
\end{equation*} 
Explicilty
\begin{align*}
	\I_{1} &= A - B_{1} - 1 \,,\quad 	\I_{0}= (B_1-A)D_{0} - B_{0} - Q_{1}^2 \,,\quad 
	\I_{-1} = (B_1-A)D_{-1}- B_{-1}+ D_{0}B_{0}  - 2Q_{1}Q_{0} \\
	\I_{-2} &= (B_1-A)D_{-2}- B_{-2}+ D_{0}B_{-1} + D_{-1}B_{0}  - 2Q_{1}Q_{-1} - Q_{0}^2\\
	\I_{-3} &= (B_1-A)D_{-3}- B_{-3} + D_{0}B_{-2} + D_{-1}B_{-1} + D_{-2}B_{0}  - 2Q_{1}Q_{-2} - 2Q_{0}Q_{-1} \\
	\I_{-4} &= (B_1-A)D_{-4}- B_{-4}  + D_{0}B_{-3} + D_{-1}B_{-2} + D_{-2}B_{-1} + D_{-3}B_{0}  - 2Q_{1}Q_{-3} - 2Q_{0}Q_{-2} - Q_{-1}^2\,.
\end{align*}
Note that $\I_1$ is the one defined in \eqref{def:J1}.
Recalling the definition of $\mathcal J(\lambda)$ we note that, at least formally
\[
\mathcal J(\frac1\kappa)= 
\sum_{\nu \in \Lambda} \left( \frac{\kappa \nu^{-1}}{\kappa\nu^{-1} - 1} \mathcal I_\nu^{(1)} + \frac{\kappa}{ (\kappa\nu^{-1} -  1)^2} \mathcal I_\nu^{(2)} \right)= \sum_{h=0}^\infty \kappa^{h+1} \I_{-h}
\]
Taylor expanding at $\kappa=0$ we obtain at least at a formal level
\begin{equation}\label{Inegative}
	\I_{-h}= \sum_{\nu\in \Lambda} \Big(- \nu^{-h-1}\cI_\nu^{(1)} + (h+1) \nu^{-h} \cI^{(2)}_\nu\Big)\,.
\end{equation}

\section{Constants of motion in the energy space}
The aim of this section is to prove Theorem \ref{genero}. We proceed in various step proving first that the $\mathcal I^{(n)}_\nu$ are well defined, then that they generate the $\I_k$ and finally that they are in involution. While these results could be proved by direct computations, our approach relies on the density of trigonometric polynomials in $\H^s$. Note that, on Galerkin truncations, the functions $\cI$ are polynomials, the  function $\mathcal{J}(\lambda)$ is rational and hence the formal identities \eqref{calIJ} and \eqref{Inegative} are true identities.
\begin{lemma}\label{tameI}
If $(u, v) \in \H^1$, then for every $\nu \in \Lambda$, the functions  $\mathcal I_\nu^{(n)}$ are well-defined, and the infinite series in \eqref{eq:Gamma1} converges absolutely. Finally 
\begin{equation}
	\label{uno}
	\sum_{\nu\in \Lambda}  \mathcal I_\nu^{(1)} \leq 	\|(u,v)\|^2_{\H^{1}}+ C \|(u,v)\|^2_{\H^{1}} \|(u,v)\|^2_{\H^{2}}\,,
\end{equation}
while, for $k>0$ we have the tame estimate
\begin{equation}
	\label{due}
	\sum_{\nu\in \Lambda} \nu^k \mathcal I_\nu^{(1)} \leq 	\|(u,v)\|^2_{\H^{k+1}}+ C \|(u,v)\|^2_{\H^{k+1}} \|(u,v)\|^2_{\H^{2}}\,.
\end{equation}
	\end{lemma}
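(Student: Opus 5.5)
The key point is that the only dangerous factor in $\mathcal I^{(1)}_\nu$ is $\frac{\nu\nu'}{\nu-\nu'}$, whose denominator can be small relative to $\nu\nu'$. On $\Lambda\subset\N$ (the $\nu=|j|^2$ are integers), however, $|\nu-\nu'|\ge 1$ whenever $\nu\neq\nu'$, so
\[
\frac{\nu\nu'}{|\nu-\nu'|}\le \nu\nu'.
\]
Throughout I use $|u_jv_k-u_kv_j|^2\le 2(|u_j|^2|v_k|^2+|u_k|^2|v_j|^2)$ and $A\le 1$.

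Fix $\nu$. The double series in \eqref{eq:Gamma1} is then bounded termwise by
\[
2\sum_{j\in S_\nu}\sum_{k\notin S_\nu}\nu\nu'\bigl(|u_j|^2|v_k|^2+|u_k|^2|v_j|^2\bigr)\le 2\Bigl(\sum_j |j|^2|u_j|^2\Bigr)\Bigl(\sum_k |k|^2|v_k|^2\Bigr)+\dots
\]
As written this needs $v\in H^1$. To stay in $\H^1$ I instead use the sharper split
\[
\frac{\nu\nu'}{|\nu-\nu'|}\le
\begin{cases}
2\nu, & \nu'\le 2\nu\ \ \text{or}\ \ \nu'\ge 2\nu\ \ (\text{then } \nu'/|\nu-\nu'|\le 2),\\
\nu\nu', & \nu/2<\nu'<2\nu.
\end{cases}
\]
More precisely: when $\nu'\ge2\nu$ one has $\nu'/(\nu'-\nu)\le 2$, so the factor is $\le 2\nu$; when $\nu'\le\nu/2$ the factor is $\le 2\nu'$; and in the middle range $\nu'\sim\nu$ the factor is $\le\nu\nu'\lesssim \nu'^2$. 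For fixed $\nu$, all constants depend on $\nu$, $S_\nu$ is finite, and the sum over $k$ is controlled by $\|u\|_{L^2}^2+\|v\|_{L^2}^2$ times a $\nu$-dependent constant (the middle range is a finite set of $\nu'$). This gives absolute convergence and shows that $\mathcal I^{(1)}_\nu$ is a bounded quartic-plus-quadratic polynomial on $\H^1$, hence analytic. $\mathcal I^{(2)}_\nu$ is a finite sum and is trivially fine.

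For \eqref{uno} and \eqref{due} I sum over $\nu$ with weight $\nu^k$, $k\ge0$. The quadratic part gives $\sum_j |j|^{2k}(A|j|^2|u_j|^2+|v_j|^2)\le \|(u,v)\|^2_{\H^{k+1}}$. For the quartic part I bound, for each pair $(j,k)$ with $\nu=|j|^2$ and $\nu'=|k|^2$,
\[
\nu^k\frac{\nu\nu'}{|\nu-\nu'|}\le \nu^{k}\nu\nu'.
\]
I then have to distribute the weight $\nu^{k+1}\nu'$ so that one factor carries at most $\H^{k+1}$ weight and the other at most $\H^2$ weight. The terms are $|u_j|^2|v_k|^2$ and $|u_k|^2|v_j|^2$, with weights $\nu^{k+1}\nu'$.

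\begin{itemize}
\item For $|u_j|^2|v_k|^2$: if $\nu\ge\nu'$, bound $\nu^{k+1}|u_j|^2\le \|u\|^2_{H^{k+1}}$ and $\nu'|v_k|^2\le\|v\|^2_{H^1}$. If $\nu<\nu'$, use $\nu^{k+1}\nu'\le \nu\,\nu'^{k+1}$ and bound by $\|u\|^2_{H^1}\|v\|^2_{H^{k+1}}$, which needs $v\in H^{k+1}$. This is not an $\H^{k+1}$ quantity (it would require $H^{k}$), so I fix it as in the next step.
\item The fix is to exploit the antisymmetry in $(\nu,\nu')$. The quartic part summed over $\nu$ is $\sum_{\nu\ne\nu'}\frac{\nu^k\nu\nu'}{\nu-\nu'}|\cdot|^2$, and symmetrizing turns the weight into $\nu\nu'\frac{\nu^k-\nu'^k}{\nu-\nu'}\le k\,\nu\nu'\max(\nu,\nu')^{k-1}$. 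For $k=0$ this vanishes, which explains why \eqref{uno} should be read as exact cancellation, the $\H^2$ term being harmless. Now the weight $\nu\nu'\max^{k-1}$ places $\max^{k}\cdot$(one power) on the larger frequency and one power on the smaller. On the $u$ side, $|j|^{2}\max^{k-1}$ plus one derivative gives $H^{k}$-type weight on $u$ and $H^{1}$-type on $v$, or $H^{k-1}$ on $v$ and $H^1$ on $u$. Both are dominated by $\|(u,v)\|^2_{\H^{k+1}}\|(u,v)\|^2_{\H^2}$, since $\|v\|_{H^{k-1}}$ is part of $\H^{k}$ and $\|v\|_{H^1}$ is part of $\H^2$.
\end{itemize}

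The main obstacle is exactly this bookkeeping of weights. The individual $\mathcal I^{(1)}_\nu$ contain a quartic term with coefficient of order $\nu\nu'$, which is not absolutely summable with weight $\nu^k$ in $\H^{k+1}$. So \eqref{uno} and \eqref{due} must be understood with the antisymmetric cancellation between the $(\nu,\nu')$ and $(\nu',\nu)$ terms, that is, summing the series in the symmetric order. I would first try this symmetrization, and only if it fails fall back on the cruder bound, accepting a higher norm.
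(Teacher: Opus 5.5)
Your fixed-$\nu$ step is fine: indeed $\sup_{\nu'\neq\nu}\frac{\nu\nu'}{|\nu-\nu'|}\le\nu(\nu+1)$, so for fixed $\nu$ the inner series is controlled by $L^2$ norms alone.

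The gap is in the summed estimates \eqref{uno}--\eqref{due}. There you go back to the crude bound $\frac{\nu\nu'}{|\nu-\nu'|}\le\nu\nu'$. You see that for $\nu<\nu'$ it would force $v\in H^{k+1}$, and you conclude that the quartic part is not absolutely summable, so the lemma must be read modulo the antisymmetric cancellation. That conclusion is false.

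The missing step is to use, in the region $\nu<\nu'$, the bound you already had for fixed $\nu$:
\[
\frac{\nu\nu'}{\nu'-\nu}=\nu\bigl(1+\frac{\nu}{\nu'-\nu}\bigr)\le\nu(\nu+1),
\]
which puts the weight on the \emph{smaller} frequency. Set $U_\nu=\sum_{j\in S_\nu}|u_j|^2$ and $V_\nu=\sum_{j\in S_\nu}|v_j|^2$, and relabel $(\nu,\nu')\to(\nu',\nu)$ in the $V_\nu U_{\nu'}$ terms. Then the sum of the absolute values of all quartic terms in $\sum_\nu\nu^k\cI^{(1)}_\nu$ is at most $2\sum_{\nu\neq\nu'}\frac{\nu\nu'}{|\nu-\nu'|}(\nu^k+\nu'^k)U_\nu V_{\nu'}$, and
\[
\frac{\nu\nu'}{|\nu-\nu'|}(\nu^k+\nu'^k)\le
\begin{cases}
2\,\nu'^k\,\nu(\nu+1), & \nu'>\nu,\\
2\,\nu^{k+1}\nu', & \nu'<\nu.
\end{cases}
\]
This gives $\lesssim\|u\|^2_{H^2}\|v\|^2_{H^k}+\|u\|^2_{H^{k+1}}\|v\|^2_{H^1}\le\|(u,v)\|^2_{\H^{k+1}}\|(u,v)\|^2_{\H^2}$ for every $k\ge0$, with $k=0$ giving \eqref{uno}. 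This is exactly the paper's argument, and it yields \emph{absolute} convergence with no cancellation needed.

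The symmetrization cannot replace this. The identity $\sum_{\nu\neq\nu'}\nu^k\frac{\nu\nu'}{\nu-\nu'}X_{\nu\nu'}=\frac12\sum_{\nu\neq\nu'}\nu\nu'\frac{\nu^k-\nu'^k}{\nu-\nu'}X_{\nu\nu'}$ rearranges a double series. It is legitimate only once absolute convergence is known, which is precisely what you claim fails.

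The partial sums $\sum_{\nu\le N}\nu^k\cI^{(1)}_\nu$ are not symmetric truncations, since each $\cI^{(1)}_\nu$ already contains the full sum over $\nu'$. So bounding symmetrically truncated double sums says nothing about convergence of the series in the statement. Moreover, the lemma is used in Lemma \ref{Iinverso} exactly to get absolute convergence, hence boundedness and analyticity, of $\sum_\nu\nu^{k-1}\cI^{(1)}_\nu$ on $\H^k$. Reading \eqref{uno} as ``exact cancellation'' would defeat that purpose.

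Your observation that the quartic part of $\sum_\nu\cI^{(1)}_\nu$ cancels is correct; it is consistent with $\sum_\nu\cI^{(1)}_\nu=AD_1+B_1=2H+1$. But on $\H^2$ it is a consequence of the absolute bound above, not a substitute for it.

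A small slip in your bookkeeping: when $\nu<\nu'$, the weight $\nu\nu'\max(\nu,\nu')^{k-1}$ puts $H^k$, not $H^{k-1}$, on $v$. This is still inside $\H^{k+1}$.
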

	
	\begin{proof}
Fix an energy level $\nu \in \Lambda$. 
The fact that the $M_j$ and $\mathcal I_\nu^{(2)}$ are well defined is obvious, since they are finite sums.  Moreover, by definition we have that
$0<A\leq \frac{1}{1+\|u\|_{H^1}^2}\leq 1$.
\\
It remains to prove the absolute convergence of the infinite sum in \eqref{eq:Gamma1} for any fixed $j \in S_\nu$:
\begin{equation}\label{eq:Aj}
	A_j = \sum_{\Lambda\ni \nu'\ne \nu} \sum_{k \in S_{\nu'}} \left| \frac{\nu \nu' }{\nu - \nu'} \right| |u_j v_k - u_k v_j|^2.
\end{equation}
recalling that $ |u_j v_k - u_k v_j|^2 \leq 2\left( |u_j|^2 |v_k|^2 + |v_j|^2 |u_k|^2 \right) $, that $|\nu-\nu'|\geq 1$ and 
\begin{equation}\label{piacimento}
	\sup_{\nu' \in \Lambda \setminus \{\nu\}} \left| \frac{\nu \nu'}{\nu - \nu'} \right| \leq \nu(\nu+1)
\end{equation} we obtain 
\[
|A_j |\leq 2 \nu( \nu |u_j|^2   \sum_{k\notin S_\nu} |v_k|^2 +  |v_j|^2 \sum_{k\notin S_\nu}|k|^2 |u_k|^2) \leq 2 \nu^2 |u_j|^2  \|v\|_{L^2}^2 + 2\nu |v_j|^2 \|u\|_{H^1}^2
\]
which imples \eqref{uno}
To prove \eqref{due}, consider 
\[
\begin{aligned}
	\sum_{j\in \Z^n} |j|^{2 k} A_j &= \sum_{\nu} \sum_{\nu' \neq \nu} \nu^k \left\vert{} \frac{\nu \nu'}{\nu - \nu'} \right\vert{} \sum_{j \in S_\nu} \sum_{k \in S_{\nu'}} \vert{}u_j v_k - u_k v_j\vert{}^2 \\ &\leq  2 \sum_{\nu \neq \nu'} \nu^k \left\vert{} \frac{\nu \nu'}{\nu - \nu'} \right\vert{} \Big( U_\nu V_{\nu'} + V_\nu U_{\nu'} \Big)\\ & \leq 2\sum_{\nu \neq \nu'}  \left\vert{} \frac{\nu \nu'}{\nu - \nu'} \right\vert{}(\nu^k+(\nu')^k) U_\nu V_{\nu'} 
\end{aligned}
\]
where we have set for compactness of notation
\begin{equation}
	\label{blocchi}
	U_\nu:=\sum_{j\in S_\nu} |u_j|^2\,,\quad 	V_\nu:=\sum_{j\in S_\nu} |v_j|^2\,.
\end{equation}
Now we bound (using also \eqref{piacimento})
\begin{equation}\label{estimatetame}
	\frac{\nu \nu'}{|\nu - \nu'|} (\nu^k+(\nu')^k)  \leq	\begin{cases}
		2(\nu')^{k} \nu(\nu+1) \qquad &\mbox{if}\quad \nu'>\nu
		\\ 2(\nu)^{k+1} \nu' \qquad &\mbox{if}\quad \nu'<\nu
	\end{cases}
\end{equation}
which implies \eqref{due}.
\end{proof}

\begin{lemma}\label{Iinverso}
Let $\I_k$, $\mathcal I^{(1)}_\nu$, and $\mathcal I^{(2)}_\nu$ be defined as above. Then, the following identities hold:
\begin{equation}\label{Iuno}
	\I_1 = -\sum_{\nu \in \Lambda} \mathcal I^{(1)}_\nu
\end{equation}
and for all $k \geq 2$,
\begin{equation}\label{Ik}
	\I_k = \sum_{\nu \in \Lambda} \nu^{k-1} \mathcal I^{(1)}_\nu + (k-1) \sum_{\nu \in \Lambda} \nu^{k} \mathcal I^{(2)}_\nu\,,
\end{equation}
where the right-hand side of \eqref{Iuno} and \eqref{Ik} are absolutely convergent series respectively in $\H^{2}$ and $\H^{k}$.
\end{lemma}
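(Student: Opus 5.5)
The plan is to prove the identities first on Galerkin truncations, where everything is a finite algebraic identity, and then pass to the limit by density, using the tame bounds of Lemma \ref{tameI}.

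\emph{Step 1: trigonometric polynomials.} Fix $(u,v)$ with finite Fourier support, say $u_j=v_j=0$ for $|j|^2>N$. Then $D(\lambda),B(\lambda),Q(\lambda)$ in \eqref{cita} are finite sums of rational functions of $\lambda$, analytic for $|\lambda|<1/N$. Their Taylor coefficients at $\lambda=0$ are exactly $D_k,B_k,Q_k$, since $\frac{\lambda\nu^2}{1-\lambda\nu}=\sum_{k\ge2}\lambda^{k-1}\nu^{k}$, and similarly for the others. Hence the algebraic identity of Lemma \ref{lemma generating function} is a genuine identity between convergent power series. The chain of equalities leading to
\[
\mathcal J(\lambda)=\sum_{\nu\in\Lambda}\Big[\tfrac{1}{1-\lambda\nu}\cI^{(1)}_\nu+\tfrac{\lambda\nu^2}{(1-\lambda\nu)^2}\cI^{(2)}_\nu\Big]
\]
involves only finite sums, so it holds for $|\lambda|<1/N$. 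The key algebraic step is the splitting of the symmetric double sum using $\frac{1}{(1-\lambda a)(1-\lambda b)}=\frac{1}{\lambda(a-b)}\big(\frac{a}{1-\lambda a}-\frac{b}{1-\lambda b}\big)$ for $a\neq b$. We then compare coefficients of $\lambda^{k-1}$, using $\frac{1}{1-\lambda\nu}=\sum\lambda^{m}\nu^m$ and $\frac{\lambda\nu^2}{(1-\lambda\nu)^2}=\sum_{m\ge1}m\lambda^m\nu^{m+1}$. For $k\ge2$ this gives \eqref{Ik}, since $\cJ(\lambda)=\I(\lambda)-\I_1$ has $\lambda^{k-1}$-coefficient $\I_k$. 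The constant term gives $0=\I_1+\sum_\nu\cI^{(1)}_\nu$, i.e.\ \eqref{Iuno}. I would double-check this constant term directly: $\sum_\nu\cI^{(1)}_\nu=AD_1+B_1+\sum_{\nu\neq\nu'}$, and the antisymmetric factor $\frac{\nu\nu'}{\nu-\nu'}$ against the symmetric $|u_jv_k-u_kv_j|^2$ kills the double sum. This leaves $AD_1+B_1=1-A+B_1=-\I_1$.

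\emph{Step 2: convergence.} For general $(u,v)$, absolute convergence of $\sum_\nu\nu^{k-1}\cI^{(1)}_\nu$ in $\H^k$ (and in $\H^2$ for $k=1$) is \eqref{due} and \eqref{uno}, more precisely the same bound applied to absolute values, which is what the proof of Lemma \ref{tameI} actually estimates. For the $\cI^{(2)}$ series, $|u_jv_k-u_kv_j|^2\le2(|u_j|^2|v_k|^2+|u_k|^2|v_j|^2)$ gives $\cI^{(2)}_\nu\le 2U_\nu V_\nu$ with the notation \eqref{blocchi}. Then
\[
\sum_\nu\nu^k\cI^{(2)}_\nu\le2\sum_\nu(\nu U_\nu)(\nu^{k-1}V_\nu)\le2\|u\|_{H^1}^2\|v\|_{H^{k-1}}^2 ,
\]
which is finite on $\H^k$.

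\emph{Step 3: density.} Let $\Pi_N$ be the Fourier projection on $|j|^2\le N$. Then $\I_k\circ\Pi_N\to\I_k$ on $\H^k$, since $\I_k$ is a continuous function of finitely many $E_j,F_j,Q_j$ with $j\le k$, and $A$ stays in $(0,1]$. On the right-hand side, each $\cI^{(a)}_\nu(\Pi_N(u,v))$ converges to $\cI^{(a)}_\nu(u,v)$ term by term. The tails are bounded uniformly in $N$ by the above estimates applied to the tails of $(u,v)$, so dominated convergence for series applies.

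The main obstacle is precisely this uniform domination for the off-diagonal part of $\cI^{(1)}_\nu$. Truncation changes the inner sum over $\nu'$, so the convergence of the $\nu'$-sum must be controlled too. I would handle it by viewing the whole expression as a double series over $(j,k)$ and dominating it by the absolutely convergent majorant $\sum_{\nu\neq\nu'}\frac{\nu\nu'}{|\nu-\nu'|}\nu^{k-1}(U_\nu V_{\nu'}+V_\nu U_{\nu'})$, which is bounded via \eqref{estimatetame}.
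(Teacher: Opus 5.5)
Your proposal is correct and follows the paper's proof: you obtain the identities on the Galerkin spaces $\mathscr{U}_N$ from the rational generating function $\mathcal J(\lambda)$ and then extend them by density. The only difference is in the limit step, where you use dominated convergence along $\Pi_N(u,v)$ with the majorant from Lemma \ref{tameI}, while the paper invokes continuity of both sides as bounded polynomials. One harmless slip: the partial-fraction identity should read $\frac{1}{(1-\lambda a)(1-\lambda b)}=\frac{1}{a-b}\big(\frac{a}{1-\lambda a}-\frac{b}{1-\lambda b}\big)$ (equivalently $\frac{1}{\lambda(a-b)}\big(\frac{1}{1-\lambda a}-\frac{1}{1-\lambda b}\big)$), without the extra factor $\frac{1}{\lambda}$.
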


\begin{proof}
	For every $N\in\N_{\geq 1}$, let $\Lambda_N$ be the set, $\{\nu_1,\ldots,\nu_N\}$, of the first $N$ elements of the ordered set $\Lambda$ and set $\mathscr{U}_N:=\{ (u,v)\in \H^1\,\vert{} \quad u_j=v_j=0\,,\quad \forall |j|^2>\nu_N\}$ and consider the functions $\I_k$ and the  $\cI^{(i)}_{\nu_1},\ldots,\cI^{(i)}_{\nu_N}$, $i=1,2$ restricted on $\mathscr{U}_N$.\\
	Under this restriction,  the $\I,\cI$ are polynomials in a finite number of variables while $\mathcal{J}(\lambda)$ is a rational function in $\lambda$ which is analytic at zero. Then, Taylor expanding as explained in Section \ref{gege} we obtain the result as a true identity (and not just a formal one).
	Since the spaces $\mathscr{U}_N$ are dense in $\H^k$ in order to conlcude our proof we only need to show that both the left and the right hand side of \eqref{Iuno} and \eqref{Ik} are bounded polynomials and hence (see \cite{muj})  analytic functions. The fact that the $\I_k$ are bounded polynomials  in $\H^k$ follows directly from their definition. The fact that the right hand sides of  \eqref{Iuno} and \eqref{Ik} are formal polynomials comes form the fact that \eqref{eq:Aj} is formally well defined. It remains to prove the boundedness, respectively in $\H^2$ and $\H^k$.  The boundedness of the right hand side of \eqref{Iuno} follows from \eqref{uno}. Regarding \eqref{Ik} the boundedness of the first summand follows from \eqref{due} with $k\rightsquigarrow k-1$.  The boundedness of the second  summand follows from 
	\[
	\sum_{\nu\in\Lambda}\nu^k \cI^{(2)}_\nu \leq \|u\|_{H^k}^2\|v\|_{L^2}^2\,.
	\]
	
\end{proof}

\begin{lemma}\label{prop:comm_gamma2}
On the space $\H^3$, the quantities $\cI_\nu^{(1)}$, $\cI_\nu^{(2)}$, and $M_j$ form a set of mutually commuting functions with respect to the Darboux Poisson bracket. That is, for any indices $j,k\in \Z^n$ and $\nu, \mu\in \Lambda$:
\begin{enumerate}
	\item $\{M_j, M_k\} = \{\cI_\nu^{(2)}, M_k\} = \{\cI_\nu^{(2)}, \cI_\mu^{(2)}\} =\{\cI_\nu^{(1)}, M_k\} = 0$,
	\item $ \{\cI_\nu^{(1)}, \cI_\mu^{(2)}\} = 0$,
	\item $\{\cI_\nu^{(1)}, \cI_\mu^{(1)}\} = 0$.
\end{enumerate}
\end{lemma}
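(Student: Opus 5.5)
The plan is to prove all identities first on the finite-dimensional Galerkin spaces $\mathscr U_N$ from the proof of Lemma \ref{Iinverso}, and then pass to $\H^3$ by density and continuity. On $\mathscr U_N$ every function involved is a polynomial in finitely many variables.

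\textbf{Reduction to $\mathscr U_N$.} Each term of $\mathcal I^{(1)}_\nu$ that involves a mode $k$ with $|k|^2>\nu_N$ is either $|u_k|^2,|v_k|^2$ or $|u_jv_k-u_kv_j|^2$. In both cases it is at least quadratic in $(u_k,v_k)$. Hence the gradient of $\mathcal I^{(1)}_\nu$ in the directions $(u_k,v_k)$ vanishes on $\mathscr U_N$, and the same holds for $\mathcal I^{(2)}_\nu$ and $M_j$. Therefore, for $(u,v)\in\mathscr U_N$, the Poisson bracket \eqref{def Poisson} of any two of these functions equals the finite-dimensional bracket of their restrictions.

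For $(u,v)\in\H^3$ the brackets are given by absolutely convergent series. The same estimates as in Lemma \ref{tameI} should show that they are continuous functions on $\H^3$: the gradients of $\mathcal I^{(1)}_\nu$ involve factors $\frac{\nu\nu'}{\nu-\nu'}$, which costs about one more derivative than in \eqref{uno}. This tameness check is the point where the choice of $\H^3$ enters. Since $\bigcup_N\mathscr U_N$ is dense in $\H^3$, it suffices to prove every identity on each $\mathscr U_N$.

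\textbf{Item (1).} This is algebraic. For real fields, group the modes of $S_\nu$ into real coordinates. The pairs $(u_j,v_j)_{j\in S_\nu}$ then carry the diagonal action of $O(S_\nu)$, whose infinitesimal generators are the angular momenta $L_{jk}\sim u_jv_k-u_kv_j$; the $M_j$ are among them, namely the rotation in the $(j,-j)$ plane. The functions $\mathcal I^{(2)}_\nu=\frac12\sum_{j,k\in S_\nu}L_{jk}^2$ are the quadratic Casimirs of $\mathfrak{so}(S_\nu)$, and blocks with different $\nu$ involve disjoint variables. From this, $\{M_j,M_k\}=\{\mathcal I^{(2)}_\nu,M_k\}=\{\mathcal I^{(2)}_\nu,\mathcal I^{(2)}_\mu\}=0$ follow.

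Similarly, each summand of $\mathcal I^{(1)}_\nu$ is invariant under $\prod_{\nu'}O(S_{\nu'})$. Indeed, $\sum_{j\in S_\nu}(A\nu|u_j|^2+|v_j|^2)$ is invariant since $A$ depends only on norms, and $\sum_{j\in S_\nu,k\in S_{\nu'}}|u_jv_k-u_kv_j|^2=|u^\nu|^2|v^{\nu'}|^2+|u^{\nu'}|^2|v^\nu|^2-2(u^\nu\cdot v^\nu)(u^{\nu'}\cdot v^{\nu'})$-type expressions are invariant as well. Hence $\{\mathcal I^{(1)}_\nu,M_k\}=0$ and $\{\mathcal I^{(1)}_\nu,L_{jk}\}=0$, and the latter gives $\{\mathcal I^{(1)}_\nu,\mathcal I^{(2)}_\mu\}=0$ directly. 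I would keep this argument as a cross-check for (2).

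\textbf{Items (2)--(3) via the generating function.} On $\mathscr U_N$, the function $\mathcal J(\lambda)$ is rational in $\lambda$ and analytic at $0$, and its Taylor coefficients are the $\I_k$ up to $\I_1$. Proposition \ref{involuz} gives $\{\I_{k_1},\I_{k_2}\}=0$ for all $k_1,k_2\ge1$. Therefore $\{\mathcal J(\lambda),\mathcal J(\mu)\}=0$ for small $\lambda,\mu$, and hence as rational functions.

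Now use the partial-fraction form
\[
\mathcal J(\lambda)=\sum_{\nu\le\nu_N}\Big[\frac{\mathcal I^{(1)}_\nu-\nu\mathcal I^{(2)}_\nu}{1-\lambda\nu}+\frac{\nu\mathcal I^{(2)}_\nu}{(1-\lambda\nu)^2}\Big].
\]
By bilinearity, $\{\mathcal J(\lambda),\mathcal J(\mu)\}$ is a finite combination of the functions $(1-\lambda\nu)^{-a}(1-\mu\nu')^{-b}$, with $a,b\in\{1,2\}$ and distinct $\nu$'s. These functions are linearly independent, so every coefficient vanishes. This yields $\{\mathcal I^{(2)}_\nu,\mathcal I^{(2)}_{\nu'}\}=0$, then $\{\mathcal I^{(1)}_\nu-\nu\mathcal I^{(2)}_\nu,\mathcal I^{(2)}_{\nu'}\}=0$, hence (2), and finally (3).

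I expect the main obstacle to be not this algebra but the functional-analytic step. One must verify that the brackets, which are infinite sums with weights $\nu\nu'/(\nu-\nu')$ and their derivatives (including the derivative of $A$), define continuous functions on $\H^3$, so that vanishing on the dense set $\bigcup_N\mathscr U_N$ implies vanishing on all of $\H^3$.
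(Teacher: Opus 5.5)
Your proof is correct, but its algebraic core differs from the paper's.

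\textbf{The paper's route.} After restricting to $\mathscr U_N$, the paper proves item (2) first, by a direct computation in the shell variables $U_\nu,V_\nu,W_\nu$. The relations $\{U_\nu,V_\nu\}=4W_\nu$, $\{U_\nu,W_\nu\}=2U_\nu$, $\{V_\nu,W_\nu\}=-2V_\nu$ show that $\cI^{(2)}_\nu=U_\nu V_\nu-W_\nu^2$ is the Casimir of this $\mathfrak{sp}_2$-type algebra. Hence it commutes with every $U_\mu,V_\mu,W_\mu$, with $A$, and therefore with $\cI^{(1)}_\mu$ and with all $\I_k$. The paper then gets item (3) from the $N\times N$ system $(-\I_1,\I_2,\dots,\I_N)^T=\M_1\cI^{(1)}+\M_2\cI^{(2)}$. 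It inverts the Vandermonde matrix $\M_1$ and uses the involutions among the $\I_k$ and $\cI^{(2)}_\nu$ that are already known.

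\textbf{Your route.} Your $O(S_\nu)$-invariance argument is the dual side of the same picture. The $\mathfrak{so}(S_\nu)$ Casimir coincides with $U_\nu V_\nu-W_\nu^2$, and $\cI^{(1)}_\nu$ depends only on the invariants $U,V,W$ and $A$. Your partial-fraction step amounts to inverting a \emph{confluent} Vandermonde system built from $2N$ constants $-\I_1,\I_2,\dots,\I_{2N}$. It expresses both $\cI^{(1)}_\nu$ and $\cI^{(2)}_\nu$ on $\mathscr U_N$ as constant-coefficient combinations of the $\I_k$. Consequently items (2), (3) and $\{\cI^{(2)}_\nu,\cI^{(2)}_\mu\}=0$ all follow from Proposition \ref{involuz} alone, with no separate computation for (2).

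\textbf{Trade-off.} The paper's route uses only $N$ of the $\I_k$ but must establish item (2) beforehand. Yours is more uniform and self-contained.

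\textbf{Other remarks.} You make explicit why the bracket commutes with restriction to $\mathscr U_N$: the transverse gradients vanish there. The paper uses this only tacitly. Your final density and continuity step on $\H^3$ is sketched at about the same level of detail as the paper's Step 3. One small imprecision: because of $A=(1+D_1)^{-1}$, the restricted functions are rational rather than polynomial. This affects nothing in the argument.
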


\begin{proof}
The fact that the $\cI^{(2)}_\nu,M_j$ are all in involution is trivial, as well as the fact that $\cI^{(1)}_\nu$ Poisson commutes with all the $M_j$. Therefore item (1) follows.\\
In order to deal with the remaining brackets we divide the proof in three steps:\\
{\bf Step 1. } We first prove the item (2)  restricted to $\mathscr{U}_N$, so that all the sums are finite.
It is useful to express  the $\cI^{(n)}_\nu$ in terms of the blocks $U,V,W$ where $W_\nu:=\sum_{j\in S_\nu} u_j\bar v_j$\,.
We have
\begin{equation}\label{eq:Gamma1_macro}
	\cI_\nu^{(1)} = A \nu U_\nu + V_\nu + \sum_{\nu' \ne \nu} \frac{\nu \nu'}{\nu - \nu'} \big( U_\nu V_{\nu'} + V_\nu U_{\nu'} - 2 W_\nu W_{\nu'} \big),\quad \cI_{\nu}^{(2)} = U_{\nu}V_{\nu} - W_{\nu}^2
\end{equation}
using the commutation rules we have
\begin{equation}
	\{U_\nu, V_\nu\} = 4W_\nu, \quad \{U_\nu, W_\nu\} = 2U_\nu, \quad \{V_\nu, W_\nu\} = -2V_\nu
\end{equation}
(of course trivially $\{U_\nu, U_\mu\} = 0$, etc., for $\nu \ne \mu$).
From the above commutation rules, one can easily verify that $\cI^{(2)}_\nu$ commutes with the quantities $U_\mu,\,V_\mu,\,W_\mu$ for all $\nu,\mu\in\Lambda$, so that, item (2) follows directly. Moreover, by \eqref{Ik} together with item (2) , we have that $\cI_\nu^{(2)}$ also commutes with all the $\I_k$.\\
{\bf Step 2. }We study item (3) restricted to $\mathscr{U}_N$. Thanks to Lemma \ref{Iinverso} we have the linear system
\begin{equation}
	\begin{pmatrix}
		-\I_1\\
\J_2\\		\vdots\\
		\I_N
	\end{pmatrix}=\M_1
	\begin{pmatrix}
		\cI^{(1)}_{\nu_1}\\
			\cI^{(1)}_{\nu_2}\\
		\vdots\\
		\cI^{(1)}_{\nu_N}
	\end{pmatrix}+\M_2\begin{pmatrix}
		\cI^{(2)}_{\nu_1}\\
			\cI^{(2)}_{\nu_2}\\
		\vdots\\
		\cI^{(2)}_{\nu_N}
	\end{pmatrix},
\end{equation}
where
\begin{equation}
	\M_1=\begin{pmatrix}
		1&\ldots&1\\
		\nu_1&\ldots&\nu_N\\
		\vdots&&\vdots\\
		\nu_1^{N-1}&\ldots&\nu_N^{N-1}
	\end{pmatrix},\quad \M_2=\begin{pmatrix}
		0&\ldots&0\\
		\vdots&&\vdots\\
		(k-1)\nu_1^k&\ldots&(k-1)\nu_N^k\\
		\vdots&&\vdots\\
		(N-1)\nu_1^N&\ldots&(N-1)\nu_N^N
	\end{pmatrix}.
\end{equation}
Since $\I$ and $\cI^{(2)}$ are in involution and $\M_1$ is a Vandermonde matrix and hence invertible,  we have that item (3) restricted to $\mathscr{U}_N$ holds for any $N\in\N$.\\

{\bf Step 3.}  It remains to show that the expressions in items (1),(2),(3) are bounded polynomials in $\H^3$. We only discuss the case of item (3), which is the most complicated.
 Since momentum preserving formal power series are closed with respect to formal Poisson brackets (see for instance \cite{ProcesiStolo}) we have that the Poisson bracket between $\cI^{(1)}_\nu$ and $\cI^{(1)}_\mu$ is a formal polynomial.
 In performing the bracket $\{\cI_\nu^{(1)}, \cI_\mu^{(1)}\} $  several terms arise. Let us consider explicitly
 \[
\begin{aligned}
	&\nu\mu  U_\nu  \sum_{\mu' \ne \mu}  \frac{\mu'}{\mu - \mu'} \{	A ,( U_\mu V_{\mu'} + V_\mu U_{\mu'} - 2 W_\mu W_{\mu'}) \}
\\
&= -A^2 \nu\mu  U_\nu  \sum_{\mu' \ne \mu}  \frac{\mu'}{\mu - \mu'} \{	\sum_{\nu'} \nu' U_{\nu'} ,( U_\mu V_{\mu'} + V_\mu U_{\mu'} - 2 W_\mu W_{\mu'}) \}
\end{aligned}
 \]
 There are two cases, either $\nu'=\mu$ or $\nu'=\mu'$.  If $\nu'=\mu$ the worst case scenario is a term of the type $\sum_{\mu' \ne \mu}\mu' V_{\mu'}$ which is convergent provided that $(u,v)\in \H^2$. If $\nu'=\mu'$ then  the worst case scenario is a term of the type $\sum_{\mu' \ne \mu}(\mu' )^2V_{\mu'}$ which is convergent provided that $(u,v)\in \H^3$.  All the other terms can be estimated similarly.

\end{proof}
We conclude by proving that also the identities \eqref{Inegative} hold not only at formal level.
\begin{lemma}\label{invonegative}
	For every $h\geq -1$ the identity \eqref{Inegative} holds on
	on $\mathcal{H}^2$.
As a consequence, for every $h,k\in \Z$  one has
	\begin{equation*}\label{PoissonMista}
	\{\I_{h},\I_k\}=0 \qquad\text{ on }\mathcal{H}^{m}.
	\end{equation*}
	where $m= \max(2, h+1,k+1)$.
\end{lemma}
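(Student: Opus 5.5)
The proof follows the scheme of Lemma \ref{Iinverso}: first establish \eqref{Inegative} on Galerkin truncations, then extend by density using boundedness of both sides. Finally, deduce involution from Lemma \ref{prop:comm_gamma2}.

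\textbf{Identity on truncations.} We restrict to $\mathscr{U}_N$. There $\mathcal J(\lambda)$ is a rational function of $\lambda$ with poles only at $\lambda=1/\nu_i$. Hence $\mathcal J(1/\kappa)$ is analytic at $\kappa=0$, with radius of convergence $\nu_1\geq 1$. The expansions of $\tilde D,\tilde B,\tilde Q$ in powers of $\kappa$ are then finite-sum geometric series and are legitimate. So the coefficient identification giving both \eqref{Jnegative} and \eqref{Inegative} is a true identity on $\mathscr{U}_N$. This uses only the algebraic manipulation already carried out in Section \ref{gege}, together with the fact that $\I(\lambda)$ and $\mathcal J(\lambda)$ differ by the constant $\I_1$.

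\textbf{Boundedness on $\H^2$.} For $h\geq -1$ the left-hand side $\I_{-h}$ only involves $D_{-h},B_{-h},Q_{-h}$ with $-h\leq 1$, together with $A$ and $B_1$. These are bounded quadratic forms on $\H^1$, so the left-hand side is a bounded polynomial.

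For the right-hand side, the term $\sum_\nu \nu^{-h-1}\cI^{(1)}_\nu$ is dominated, since $\nu^{-h-1}\le \nu^{0}$ for $h\ge -1$, by $\sum_\nu \cI^{(1)}_\nu$ with absolute values inside. By \eqref{uno} this is bounded on $\H^2$.

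The term $(h+1)\sum_\nu \nu^{-h}\cI^{(2)}_\nu$ is bounded by
\[
\sum_\nu \nu^{-h} U_\nu V_\nu\le \|u\|_{H^1}^2\|v\|_{L^2}^2 \qquad (h\ge -1).
\]
Hence both sides are bounded polynomials on $\H^2$, and therefore analytic (\cite{muj}). They agree on the dense set $\bigcup_N\mathscr{U}_N$, so they agree on all of $\H^2$.

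\textbf{Involution.} Take $h,k\in\Z$. Each $\I_h$ is, by \eqref{Ik} for $h\ge 2$, \eqref{Iuno} for $h=1$, and \eqref{Inegative} for $h\le 0$, a linear combination $\sum_\nu (a_\nu\cI^{(1)}_\nu+b_\nu\cI^{(2)}_\nu)$ with polynomially weighted coefficients.

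On $\mathscr{U}_N$ the sums are finite. Lemma \ref{prop:comm_gamma2} then gives $\{\I_h,\I_k\}=0$ there by bilinearity.

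To extend, it suffices that $\{\I_h,\I_k\}$ be a bounded polynomial on $\H^m$ with $m=\max(2,h+1,k+1)$. On that space each $\I_h,\I_k$ is a bounded polynomial whose gradients are controlled: $\nabla_u\I_k\in H^{-(m-2)}$-type spaces pair with $\nabla_v\I_h$. Concretely, $\I_k$ for $k\ge2$ is an explicit polynomial in $E_j,F_j,Q_j$ with $j\le k$ and in $A$. Its Poisson brackets are computed via Lemma \ref{algebra} and involve at most $Q_{k+1}$, $D_{k}$-type terms. These are well defined on $\H^{k+1}$, and similarly for the negative-index ones, which are smoother.

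So the bracket is a bounded polynomial on $\H^m$, vanishing on a dense set, hence zero.

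\textbf{Main obstacle.} The main delicate point is the last regularity count. One must ensure that the bracket of the explicit polynomial expressions (rather than the infinite $\cI$-sums) is bounded on $\H^m$. I would handle this by working directly with \eqref{def I} and \eqref{Jnegative} and Lemma \ref{algebra}, rather than interchanging the bracket with the infinite sums in $\nu$.
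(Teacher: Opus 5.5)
Your proof of the identity \eqref{Inegative} is correct and follows the paper's route. You get the exact identity on $\mathscr{U}_N$ from the expansion of $\mathcal J(1/\kappa)$, then pass to $\H^2$ by density. Your explicit bounds (\eqref{uno} for the $\cI^{(1)}$ part, $\cI^{(2)}_\nu\le U_\nu V_\nu$ for the rest) fill in what the paper calls ``reasoning as in Lemma \ref{tameI}''. The vanishing of $\{\I_h,\I_k\}$ on $\mathscr{U}_N$ via Lemma \ref{prop:comm_gamma2} is also as in the paper.

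The gap is in the regularity count you use to extend the involution to $\H^m$. It is false that brackets computed with Lemma \ref{algebra} only produce blocks of index at most $\max(h,k)+1$. That lemma gives $\{E_{k_1},F_{k_2}\}=8Q_{k_1+k_2}+\dots$, $\{E_{k_1},Q_{k_2}\}=\dots-2F_{k_1+k_2-1}$ and $\{F_{k_1},Q_{k_2}\}=\dots-2E_{k_1+k_2-1}$. Since $\I_h$ contains $E_h$, $Q_2,\dots,Q_h$ and $E_j,F_j$ for $2\le j\le h-1$, a term-by-term expansion of $\{\I_h,\I_k\}$ with $h,k\ge 2$ produces blocks of index up to $h+k-1$. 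For example, in $\{\I_3,\I_4\}$ the piece $\{E_3,-\tfrac12 F_2F_3\}$ contributes $-4F_2Q_6$, and $\{E_3,-2Q_2Q_4\}$ contributes $4Q_2F_6$. Neither is bounded on $\H^5$, although $m=5$. These terms cancel only after the full algebraic computation, which is precisely Proposition \ref{involuz}. So for $3\le h<k$ your density argument does not close as written.

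The repair is easy. For $h,k\ge 2$, invoke Proposition \ref{involuz} directly: the bracket is $0$ in $\mathscr{P}$ and no density is needed. When one index, say $k$, is $\le 1$, then $\I_k$ is a polynomial in $A$, $1/A=1+D_1$ and $D_j,B_j,Q_j$ with $j\le 1$. Lemma \ref{relations1} holds verbatim for nonpositive indices. It shows that bracketing these with blocks of index $\le \max(h,1)$ produces at worst $Q_{\max(h,1)+1}$, $D_{\max(h,1)}$ and $B_{\max(h,1)}$, all bounded on $\H^{\max(2,h+1)}$. In this range your count is right. Working with the explicit finite expressions \eqref{def I} and \eqref{Jnegative} is then a legitimate alternative to the paper's appeal to Step 3 of Lemma \ref{prop:comm_gamma2}, with the advantage that it avoids the double sums over $\nu,\mu$.
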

\begin{proof}
	We recall that  identity \eqref{Inegative} restricted to
	$\mathscr{U}_N$ holds not just formally. The left hand side of \eqref{Inegative} is well defined in $\H^2$ by \eqref{Jnegative}, and the right hand side is absolutely convergent in $\H^2$, reasoning as in Lemma \ref{tameI}. The first result follows.
	\\
	To prove the involution property we only need to prove that $	\{\I_{h},\I_k\}$ is a bounded polynomial on $\mathcal{H}^{m}$, this is done just as in Step 3 of the proof of Lemma \ref{prop:comm_gamma2}.
\end{proof}
\section{The Kirchhoff-Pohozaev hierarchy in Fourier/complex notation}\label{complessi}



In this section, we introduce the standard complex coordinates that diagonalize the quadratic part of the Hamiltonian, and we study the expression of the constants of motion in terms of the complex variables.
Recalling \eqref{fourier}, we define
\begin{equation}\label{z bar z}
z_j := \frac{|j|^{\frac12} u_j + i |j|^{-\frac12} v_j}{\sqrt2}, \qquad
\bar z_j := \frac{|j|^{\frac12} u_{-j} - i |j|^{-\frac12} v_{-j}}{\sqrt2}.
\end{equation}
It is also convenient to define
\[	\begin{aligned}
	H^{(2)}_k:= \sum_{j \neq 0} |j|^{2k-1} |z_j|^2\,,\quad P_k:= \sum_{j \neq 0} |j|^{2k-1}
	\text{Re}(z_j z_{-j})\,,
\end{aligned}
\]
so that, in the variables $(z,\bar z)$, the quantities defined in \eqref{BDQ} read
\[
\begin{aligned}
	D_k &= H^{(2)}_k + P_k\,, \quad  B_k = H^{(2)}_k - P_k\,, \quad Q_k = \sum_{j \neq 0} |j|^{2k-2} \text{Im}(z_j z_{-j}), \\
	E_k &= 2 H^{(2)}_k + \frac{D_1^2 + 2D_1}{1+D_1} B_k - \frac{2D_1}{1+D_1} H^{(2)}_k, \\
	F_k &= -2 P_k + \frac{D_1^2 + 2D_1}{1+D_1} B_k + \frac{2D_1}{1+D_1} P_k\,.
\end{aligned}
\]
Substituting in the expressions for $H$ and $\I_k$, we obtain 
\begin{equation}\label{costanticompl}
	\begin{aligned}
		H &= -\frac12 + H_1^{(2)} - \frac{D_1^2}{2(1+D_1)} \quad\text{or equivalently}\\
		\I_1&= -2H_1^{(2)} + \frac{D_1^2}{1+D_1},\\[15pt]
		\I_k &= 2H^{(2)}_k + \frac{D_1^2 + 2D_1}{1+D_1} B_k - \frac{2D_1}{1+D_1} H^{(2)}_k \\
		&\quad + \sum_{j=1}^{k-2} \left( H^{(2)}_{1+j}H^{(2)}_{k-j} - P_{1+j}P_{k-j} \right) - \sum_{j=2}^k Q_{j}Q_{k-j+2} \quad\  \forall k\geq2.
	\end{aligned}
\end{equation}
Focusing on the quadratic part of the constants of motion, we have the following lemma.
\begin{lemma}\label{lemma complex const}
	One has
	\begin{equation}\label{quadratic constants z}
		\begin{aligned}
			\I_1&= -2H_1^{(2)} + \I_1^{(\geq4)}, \qquad \I_k= 2H_k^{(2)} + \I_k^{(\geq4)} \quad\  \forall k\geq2,\\
			\cI_\nu^{(1)} &= 2\sqrt{\nu} \sum_{j\in S_\nu} |z_j|^2 + \cI_\nu^{(1, \geq4)} \quad\ \forall\nu\in\Lambda,
		\end{aligned}
	\end{equation}
	where $\I_1^{(\geq4)}, \I_k^{(\geq4)}, \cI_\nu^{(1, \geq4)}$ are of polynomial homogeneity $\geq4$ as functions of the variables $(z,\bar z)$.
\end{lemma}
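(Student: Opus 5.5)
The plan is a direct expansion in homogeneity, using the complex-variable expressions \eqref{costanticompl} for $\I_1,\I_k$ and definition \eqref{eq:Gamma1} for $\cI^{(1)}_\nu$. Throughout, $D_k,B_k,Q_k,H^{(2)}_k,P_k$ are homogeneous quadratic forms in $(z,\bar z)$. Moreover $A=1/(1+D_1)$ is analytic near the origin, with $A=1-D_1+O(|z|^4)$.

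\textbf{Step 1: $\I_1$.} In $\I_1=-2H^{(2)}_1+\frac{D_1^2}{1+D_1}$ the term $\frac{D_1^2}{1+D_1}=D_1^2\sum_{m\ge0}(-D_1)^m$ is a convergent series of homogeneous polynomials of degree $\ge4$. This gives the first identity.

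\textbf{Step 2: $\I_k$, $k\ge2$.} Take the expression in \eqref{costanticompl}.
\begin{itemize}
\item The factors $\frac{D_1^2+2D_1}{1+D_1}$ and $\frac{2D_1}{1+D_1}$ vanish to order two at the origin. Multiplied by the quadratic forms $B_k$, $H^{(2)}_k$, they give terms of degree $\ge4$.
\item The sums $H^{(2)}_{1+j}H^{(2)}_{k-j}-P_{1+j}P_{k-j}$ and $Q_jQ_{k-j+2}$ are products of two quadratic forms, hence quartic.
\item The only surviving quadratic term is $2H^{(2)}_k$.
\end{itemize}
As a sanity check, the quadratic part of $E_k$ is $B_k+D_k$ (set $A=1$), and $D_k+B_k=2H^{(2)}_k$ from the formulas for $D_k,B_k$.

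\textbf{Step 3: $\cI^{(1)}_\nu$.} In \eqref{eq:Gamma1} the double sum $\sum_{\nu'\neq\nu}\sum_k\frac{\nu\nu'}{\nu-\nu'}|u_jv_k-u_kv_j|^2$ is quartic. Writing $A\nu|u_j|^2=\nu|u_j|^2-\frac{D_1}{1+D_1}\nu|u_j|^2$, the second piece is of degree $\ge4$. So the quadratic part is $\sum_{j\in S_\nu}(\nu|u_j|^2+|v_j|^2)$, and it remains to check
\[
\nu|u_j|^2+|v_j|^2 \;\to\; \sqrt\nu\,(|z_j|^2+|z_{-j}|^2)
\]
after summing over $j\in S_\nu$, with $|j|=\sqrt\nu$.

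\textbf{Step 4: the identity in Step 3.} By \eqref{z bar z} and reality of $u,v$, we have $u_{-j}=\bar u_j$ and $v_{-j}=\bar v_j$. Then
\[
|z_j|^2=\tfrac12\bigl(|j||u_j|^2+|j|^{-1}|v_j|^2+2\,\mathrm{Im}(\bar u_j v_j)\bigr),
\]
with the sign of the cross term to be fixed carefully. The cross term changes sign under $j\to-j$, so
\[
|z_j|^2+|z_{-j}|^2=|j||u_j|^2+|j|^{-1}|v_j|^2 .
\]
Since $S_\nu$ is symmetric, summing over $j\in S_\nu$ gives
\[
\sum_{j\in S_\nu}\bigl(\nu|u_j|^2+|v_j|^2\bigr)=|j|\sum_{j\in S_\nu}\bigl(|z_j|^2+|z_{-j}|^2\bigr)=2\sqrt\nu\sum_{j\in S_\nu}|z_j|^2 .
\]
The same computation gives $D_k+B_k=\sum_j|j|^{2k-1}(|z_j|^2+|z_{-j}|^2)$, which recovers the normalization of $H^{(2)}_k$.

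The only real care needed is this bookkeeping of the $z_j$ versus $z_{-j}$ conventions. The analyticity of the remainders is immediate from Lemma \ref{tameI}, since all expansions converge for $D_1<1$.
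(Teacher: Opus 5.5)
Your proposal is correct and follows the same route as the paper. You read off the quadratic parts from \eqref{costanticompl} and \eqref{eq:Gamma1} using $A=(1+D_1)^{-1}=1+O(|z|^2)$, then pass to the variables \eqref{z bar z}, explicitly supplying the check $|z_j|^2+|z_{-j}|^2=|j|\,|u_j|^2+|j|^{-1}|v_j|^2$ that the paper leaves as a ``straightforward computation'' (the cross term in $|z_j|^2$ is actually $-\mathrm{Im}(\bar u_j v_j)$, but as you note it cancels under $j\to -j$, so the sign is immaterial).
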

\begin{proof}
	For $\I_1, \I_k$, the thesis follows directly from \eqref{costanticompl}. For $\cI_\nu^{(1)}$, from the definition in \eqref{eq:Gamma1} it is enough to note that $A=(1+D_1)^{-1}$ and therefore the quadratic part of $\cI_\nu^{(1)}$ is just $\sum_{j \in S_\nu} (\nu|u_j|^2 + |v_j|^2)$. The thesis follows by a straightforward computation by applying the change of coordinates \eqref{z bar z}.
\end{proof}

\section{On Integrability of the Kirchhoff-Pohozaev hierarchy.}
\subsection{Restriction to finite dimensional support}
As explained in the introduction, due to the constants of motion $M_j$, for any symmetric $T\subset \Z^n\setminus\{0\}$ the sets
\[
\mathscr{U}_T:=\{(u,v)\in \H^1 \;\vert{} \quad u_j=v_j=0 \,,\quad \forall j\notin T \}
\]
defined in \eqref{UT} are invariant under the dynamics. Let us assume that $T$ has finite cardinality and write w.l.o.g, $T=\cup_{h=1}^N T_h$ where $T_h= T\cap S_{\nu_h}\neq \emptyset$ for some  increasing sequence $\nu_h\in \Lambda$ and consider the functions $\cI,M$ restricted to $\mathscr{U}_T$.
\begin{lemma}\label{restringo}
	The number $A$ of of algebraically independent  resatricted constants of motion $\cI,M$ is 
	\[
	N+|T|\leq A\leq N + |T|/2 + K 
	\]
	where $K$ is the number of $T_h$ with cardinality $\geq 4$. In particular in dimension $n=1$ the number of independent constants of motion is equal to $|T|$ and the restricted system is integrable. 
\end{lemma}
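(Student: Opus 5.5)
The statement as printed has a typo in the lower bound. The introduction (and the count below) gives
\[
N+|T|/2\leq A\leq N+|T|/2+K,
\]
and this is what I would prove. The strategy is to list which of the restricted functions $\cI,M$ are non-trivial, discard those that are obviously functions of the others (upper bound), and show that the remaining core family has algebraically independent quadratic parts at the origin (lower bound).

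\textbf{Step 1: which functions survive.} On $\mathscr{U}_T$ all $u_j,v_j$ with $j\notin T$ vanish.
\begin{itemize}
\item For $\nu\notin\{\nu_1,\dots,\nu_N\}$ every term of \eqref{eq:Gamma1} contains a factor $u_j$ or $v_j$ with $j\in S_\nu$, so $\cI^{(1)}_\nu\equiv0$; likewise $\cI^{(2)}_\nu\equiv 0$.
\item The sum over $\nu'$ in \eqref{eq:Gamma1} becomes finite, so all restricted functions are polynomials on $\R^{2|T|}$.
\item Since $M_{-j}=-M_j$, the momenta give at most $|T|/2$ distinct functions.
\end{itemize}
The candidates are therefore $\cI^{(1)}_{\nu_h}$, $\cI^{(2)}_{\nu_h}$ for $h=1,\dots,N$, and one $M_j$ for each pair $\{j,-j\}\subset T$.

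\textbf{Step 2: upper bound.} If $|T_h|=2$, say $T_h=\{j,-j\}$, then
\[
\cI^{(2)}_{\nu_h}=|u_jv_{-j}-u_{-j}v_j|^2 .
\]
Using the reality relations $u_{-j}=\bar u_j$ and $v_{-j}=\bar v_j$, this equals a constant multiple of $|M_j|^2$ (indeed $M_j=\tfrac12(u_j\bar v_j-\bar u_jv_j)$ is purely imaginary), so it is algebraically dependent on $M_j$. Hence only the $\cI^{(2)}_{\nu_h}$ with $|T_h|\geq4$ can add new independent functions, and there are $K$ of them. This gives $A\leq N+|T|/2+K$.

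\textbf{Step 3: lower bound.} I would show that the core family $\cI^{(1)}_{\nu_1},\dots,\cI^{(1)}_{\nu_N}$ together with one $M_j$ per pair $\{j,-j\}$ is algebraically independent.
\begin{itemize}
\item By Lemma \ref{lemma complex const}, the quadratic part of $\cI^{(1)}_{\nu_h}$ is $2\sqrt{\nu_h}\sum_{j\in T_h}|z_j|^2$.
\item A direct computation in the variables \eqref{z bar z} shows that $M_j$ is exactly quadratic and proportional to $|z_j|^2-|z_{-j}|^2$.
\item These are linear forms in the actions $I_j=|z_j|^2$. The block sums over the $T_h$ and the pair differences are linearly independent: the sums $I_j+I_{-j}$ within a block and the differences $I_j-I_{-j}$ span independent directions.
\item Since the $I_j$ are algebraically independent (distinct monomials), linearly independent linear forms in them are algebraically independent.
\end{itemize}
Now suppose $P(f_1,\dots,f_m)\equiv0$ with $P\neq0$, where the $f_i$ are the core functions. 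Write $f_i=q_i+O(|z|^4)$ and let $P_d$ be the lowest-degree nonzero homogeneous component of $P$. A constant term cannot occur, since all $f_i$ vanish at $0$. Then the homogeneous component of degree $2d$ in $z$ of $P(f)$ is $P_d(q_1,\dots,q_m)$, which must vanish. This contradicts the independence of the $q_i$. Hence $A\geq N+|T|/2$.

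\textbf{Step 4: the case $n=1$.} Here every $T_h$ is a pair $\{j,-j\}$, so $K=0$ and $N=|T|/2$. The number of independent constants is therefore $N+|T|/2=|T|$, which is half the real dimension $2|T|$ of $\mathscr{U}_T$. Pairwise involution holds by Lemma \ref{prop:comm_gamma2}, which applies without regularity issues because on $\mathscr{U}_T$ all sums are finite. Together these give Liouville integrability of the restricted system.

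\textbf{Expected main obstacle.} The delicate point is Step 3: verifying precisely that the quadratic parts in complex variables are independent, and checking that the leading-order argument transfers algebraic independence from the quadratic parts to the full polynomials.
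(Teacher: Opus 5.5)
Your proposal is correct and follows essentially the same route as the paper. You discard the restrictions that vanish, bound the count by the candidates $\cI^{(1)}_{\nu_h}$, one $M_j$ per pair $\{j,-j\}$, and the $K$ functions $\cI^{(2)}_{\nu_h}$ with $|T_h|\geq 4$ (the paper uses the equivalent $L_i$), and you get the lower bound from independence of the quadratic parts in the $z_j$, which you justify more carefully than the paper via the lowest-degree homogeneous component; your corrections $N+|T|/2\leq A$ and $M_{-j}=-M_j$ (so that $\cI^{(2)}_{\nu_h}=-4M_j^2$ when $|T_h|=2$) are also right.

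Two small points you share with the paper. First, the $\cI^{(1)}_{\nu}$ are rational rather than polynomial because of the factor $(1+D_1)^{-1}$, which does not affect your argument. Second, for Liouville integrability you should add one line saying that algebraic independence of these rational functions gives linearly independent differentials on an open dense set; this follows from the Jacobian criterion, or directly from the quadratic parts.
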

\begin{proof}
	This is a direct consequence of the definitions. We start by noticing that all the restricted $M_j$ with $j\notin T$  are zero as well as all the restricted $\cI^{(a)}_{\nu}$ such that $T\cap S_{\nu}=0$. Recalling that $M_j=M_{-j}$ we divide 
	$T= T_+\cup T_{-}$ with $T_{-}=-T_{+}$ disjoint.  We 
	consider  the list $\{M_j\}_{j\in T_+}$, of course they are independent since the have different variables. Now we add the list $\{\cI^{(1)}_{\nu_h}\}_{h=1}^N$, these are all non-trivial polynomials. Finally we remark that
	\[
	\cI_{\nu}^{(2)}= 2\sum_{j\in S_\nu} M_j^2 + \frac12 \sum_{\substack{j,k\in S_\nu\\ j\neq \pm k}}|u_j v_k -v_j u_k|^2
	\]
	thus, if $\{h_i\}_{i=1}^K$ is the increasing sequence  of indexes for which $|T_{h_i}|\geq 4$, we define $$L_i:= \sum_{\substack{j,k\in S_{\nu_{h_i}}\\ j\neq \pm k}}|u_j v_k -v_j u_k|^2$$
	In conclusion our candidates for independent constants of motion are the restrictions to $\mathscr{U}_T$ of
	\begin{equation}
		\label{costantine}
		\{M_j\}_{j\in T_+}\cup\{\cI^{(1)}_{\nu_h}\}_{h=1}^N\cup  \{ L_i \}_{i=1}^K
	\end{equation}
	the fact that  the list $\{M_j\}_{j\in T_+}\cup\{\cI^{(1)}_{\nu_h}\}_{h=1}^N$ is made of algebraically independent functions follows since they are polynomials and their quadratic parts are (written in the complex variables $z_j$ of section \ref{complessi})
	\[
	M_j=(M_j)^{\rm quadratic} = |z_j|^2-|z_{-j}|^2\,,\quad (\cI_{\nu_h}^{(1)})^{\rm quadratic} = \sqrt{\nu_h}\sum_{j\in T_h} |z_j|^2
	\]
	which are evidently independent. Regarding the $L_i$, it is obvious that the are independent from each other and from the $M_j$, however it is not so trivial to control the indepedndence from the $\cI^{(1)}$ (we strongly believe this to be true, however...)
\end{proof}
We are now ready to prove Theorem \ref{finito}.
\begin{proof}[Proof of Theorem \ref{finito}. ]
	We start by noticing that, if $$T=\{j_1,-j_1,\dots, j_d,-j_d\}\subset  \Z^n\setminus\{0\}$$ then $K=0$ and the constants of motion in \eqref{costantine} reduce to \eqref{massimali?}. Since these are $2d$ independent constants of motion in involution we have an integrable system. Moreover the Hamiltonian \ref{hamiltonian1int}, by Lemma \ref{calIJ} satisfies
	\[
	H= \frac12 (\sum_{j=1}^d \cI^{(1)}_{|j_d|} -1)\,,
	\]
	hence it is integrable. This concludes the proof of item $(i)$. In order to prove item $(ii)$ we just apply Vey's theorem (see \cite{Vey1978},\cite{Eliasson1990}) in the version proposed for example by \cite{Ito1989} or \cite{Stolovitch2000}.
\end{proof}
\subsection{ Formal Birkhoff  Normal Form on sequence spaces}
In the paper \cite{ProcesiStolo} there is a discussion of the Poisson algebra of formal Hamiltonians with a fixed point at zero, on formal symplectic changes of variables which preserve the fixed point and on the existence of a Formal Birkhoff normal form. Our purpose here is to show that formal Hamiltonians in involution can be put in simultaneous normal form, and how the presence of commuting constants of motion simplfies the normal form itself. This is a classical argument in finite dimension, see for example \cite{Ito1989}, we briwfly review the argument as well as those resulta  nneded to apply it in infinte dimension.
\\
Even though we could work in the real variables, it is convenient to pass to the complex notations of the previous section. Here, for the first time, we shall strongly need to restrict to the one dimensional torus, hence set  $n=1$ so that $j\in \Z\setminus\{0\}$.
Let $I\subset \Z$ be an index set (we shall be only interested in the case $I=\Z_0=\Z\setminus\{0\}$).
As usual given a vector $k\in \Z^I$, $|k|:=\sum_{j\in I}|k_j|$. We denote $\N^I_f$ to be the set of  finitely supported sequences of non negative integers, similarly for $\Z^I_f$. If $j\in\mathbb{I}$ then $\be_j\in \Z^I_f$ denotes the vector the $j$-coordinate of which is  $1$, while the others are zero.

\begin{defn}[Formal power series]\label{Hr}
	We consider the space $\cF$ of formal  power series expansions  
	in $z=(z_j)_{j\in I}\in \C^I$:
	$$ 
	H(u)  = \sum_{\substack{\al,\bt\in\N^I_f} }H_{\al,\bt}z^\al \bar z^\bt\,,
	\qquad u\in \C^\Z,\quad
	z^\al:=\prod_{j\in I}z_j^{\al_j} 
	$$ 
	with the following properties: 
	\begin{enumerate}
		\item $H_{0,0}= 0$, $H_{\be_0,0}= H_{0,\be_0}=0$ 
		\item Reality condition:
		\begin{equation}\label{real}
			H_{\al,\bt}= \overline{ H}_{\bt,\al}\,;
		\end{equation}
		\item Momentum conservation:
		\begin{equation}
			H_{\al,\bt}= 0 \quad\mbox{if}\;\, \pi(\al,\bt):= \sum_{j\in I} j(\al_j-\bt_j)\ne 0 \label{mconserv}
		\end{equation}
	\end{enumerate}
	We shall denote 
	\[
	\cM:=\{(\al,\bt) \in \N^I_f:\qquad \pi(\al,\bt)=0\}
	\]
	so that $H\in \cF$ can be written as
	\[
	\sum_{(\al,\bt)\in \cM}H_{\al,\bt}z^\al \bar z^\bt
	\] 
	
\end{defn}

\begin{lemma}\label{anello}
	The space $\cF$ is a ring  with respect to the  sum and multiplication
	\[
	\begin{aligned}
		(H+F)(z) &:=  \sum_{(\al,\bt)\in \cM} (H_{\al,\bt}+ F_{\al,\bt}) z^\al \bar z^\bt \\
		(H \cdot F )(z) &:=  \sum_{(\al,\bt)\in \cM} \Big(\sum_{(\al_1,\bt_1) \in \cM} \sum_{\substack {(\al_2,\bt_2) \in \cM \\
				\al_1+\al_2=\al\,,\\ \bt_1+\bt_2=\bt}} H_{\al_1,\bt_1} F_{\al_2,\bt_2}\Big) z^\al \bar z^\bt\,.
	\end{aligned}
	\]
	Moreover $\cF$ is a Poisson albebra with respect to the Brackets
	\begin{equation}\label{well}
		\set{F,G} := \im\sum_{  (\al^{(i)},\bt^{(i))}\in\cM }  \bcoeffu{F}\bcoeffd{G} \sum_j \pa{\aluno_j \btdue_j  - \btuno_j \aldue_j }z^{\aluno + \aldue -\be_j}\bar{u}^{\btuno + \btdue - \be_j}\,.
	\end{equation}
	
\end{lemma}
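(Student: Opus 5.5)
The plan is to check the three claims of Lemma \ref{anello} directly at the level of coefficients. Throughout, the key point is that every coefficient of a sum, product or bracket is a \emph{finite} sum, so that all operations are well defined on formal series. After that, closure of the three structural conditions of Definition \ref{Hr} and the algebraic identities follow.

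\textbf{Ring structure.} For the sum nothing needs to be done: conditions (1)--(3) of Definition \ref{Hr} are linear. For the product, fix $(\al,\bt)$. The decompositions $\al=\al_1+\al_2$, $\bt=\bt_1+\bt_2$ with $\al_i,\bt_i\in\N^I_f$ are finitely many, since each $\al_i\le\al$ componentwise and $\al$ has finite support. Hence each coefficient of $H\cdot F$ is a finite sum.

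Momentum is additive, $\pi(\al_1+\al_2,\bt_1+\bt_2)=\pi(\al_1,\bt_1)+\pi(\al_2,\bt_2)$, so only pairs in $\cM$ contribute and the product lies in $\cM$. The reality condition follows by swapping $\al\leftrightarrow\bt$ in the convolution and conjugating. The vanishing of the constant term follows since both factors have $H_{0,0}=F_{0,0}=0$.

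The degree-one coefficients of the product also vanish. Indeed, such a coefficient would need one factor with $(\al_i,\bt_i)=(0,0)$, which is excluded. This argument does not even use the assumption $H_{\be_0,0}=0$, which in any case is empty for $I=\Z_0$. Associativity, commutativity and distributivity are inherited from the corresponding identities for finite polynomials, because coefficients are computed by the same finite convolutions.

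\textbf{Poisson bracket.} Fix a target monomial $z^\al\bar z^\bt$. In \eqref{well} it receives contributions from pairs with $\aluno+\aldue=\al+\be_j$ and $\btuno+\btdue=\bt+\be_j$. The index $j$ is not a priori bounded, and this is the main obstacle. I would handle it as follows: if $j\notin\mathrm{supp}(\al)\cup\mathrm{supp}(\bt)$, the factor $\aluno_j\btdue_j-\btuno_j\aldue_j$ forces, say, $\aluno_j=1$ and $\btdue_j=1$, with $\aldue_j=\btuno_j=0$.

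The remaining worry is that infinitely many such $j$ could contribute. Here I would use momentum conservation. Since $(\al^{(1)},\bt^{(1)})\in\cM$ contains $z_j$ with $j$ outside the support of $(\al,\bt)$, the coefficient $F_{\al^{(1)},\bt^{(1)}}$ is a fixed finite object only if we also restrict its degree. The degree is fixed: $|\al^{(1)}|+|\bt^{(1)}|+|\al^{(2)}|+|\bt^{(2)}|=|\al|+|\bt|+2$. However, the momentum constraint alone does not bound $j$ in general.

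This is the point where I expect to need the hypotheses of \cite{ProcesiStolo}: one works in the graded setting where brackets are computed degree by degree and the coefficient sums are required to converge. Concretely, I would either restrict to the subclass actually used later (finite sums in each degree, or coefficient sequences with a summability condition), or appeal to \cite{ProcesiStolo}, where closure of momentum-preserving formal series under the bracket is proved. In practice I would cite that result for well-definedness.

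With well-definedness in hand, the algebraic checks are routine. Momentum conservation of $\{F,G\}$ follows because removing $\be_j$ from both $\al$ and $\bt$ leaves $\pi$ unchanged. The reality condition follows from the factor $\im$ together with the antisymmetry of $\aluno_j\btdue_j-\btuno_j\aldue_j$ under conjugation. Antisymmetry of the bracket and the Leibniz rule are checked monomial by monomial.

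The Jacobi identity I would deduce by restricting to finitely many variables $\{z_j\}_{|j|\le N}$. There the bracket is the standard one, so Jacobi holds. Each coefficient of the triple bracket is then determined by the truncations for $N$ large, provided the same finiteness issue is controlled.
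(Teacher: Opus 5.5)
Your ring argument is the paper's own: each coefficient of $H\cdot F$ is a finite convolution because $\al_1\le\al$ and $\bt_1\le\bt$ componentwise. For the bracket you end up, as the paper does, citing \cite{ProcesiStolo}. However, the obstruction you describe does not exist, and your claim that momentum conservation does not bound $j$ is false in this setting. The fallbacks you propose (restricting to a subclass, imposing summability) are therefore unnecessary, and they would only prove a weaker statement than the lemma.

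The missing step is short. Fix $(\al,\bt)$. For the finitely many $j\in\mathrm{supp}(\al)\cup\mathrm{supp}(\bt)$, the splittings of $(\al+\be_j,\bt+\be_j)$ are finite in number. Now take a contributing term with $j$ outside this support. As you observed, there are two possible configurations. Either $\al^{1}=\al'+\be_j$, $\bt^{1}=\bt'$, $\al^{2}=\al''$, $\bt^{2}=\bt''+\be_j$, or the symmetric one $\al^{1}=\al'$, $\bt^{1}=\bt'+\be_j$, $\al^{2}=\al''+\be_j$, $\bt^{2}=\bt''$. In both cases $\al'+\al''=\al$ and $\bt'+\bt''=\bt$. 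Since $(\al^{1},\bt^{1})\in\cM$,
\[
0=\pi(\al^{1},\bt^{1})=\pi(\al',\bt')\pm j ,
\]
so $j=\mp\pi(\al',\bt')$. Thus $j$ is determined by the sub-multi-index $(\al',\bt')\le(\al,\bt)$, of which there are finitely many; in particular $|j|\le\sum_k |k|(\al_k+\bt_k)$. Hence every coefficient of $\{F,G\}$ is a finite sum. This uses only that the momentum map $j\mapsto j$ is injective on $I$, and it is exactly the closure property you wanted to import from \cite{ProcesiStolo}.

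With this bound, your truncation argument for the Jacobi identity goes through without the proviso. Each coefficient of $\{F,\{G,H\}\}$ at $(\al,\bt)$ involves only finitely many coefficients of $F,G,H$, all supported in $\{|k|\le N\}$ for some $N=N(\al,\bt)$. So it equals the corresponding coefficient computed with the standard bracket in the finitely many variables $z_k,\bar z_k$, $|k|\le N$, where Jacobi holds.
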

\begin{proof}
	We only need to show that the definitions are  well posed, which is  obvious for the sum and, for the multiplication, is due to the fact that, since $\al_i\bt_i\geq 0$ the sums over these indexes are finite. Regarding the Poisson brackets, this is proved in \cite{ProcesiStolo}.
\end{proof}

Let us set
\begin{equation}\label{nocciolina}
	\cK:=\left\{Z\in \cF\, : \, Z(z) = \!\!\sum_{\substack{\al\in \cM:}} Z_{\al,\al}|u|^{2\al}\right\}\,,\qquad \cR:=\left\{R\in \cF\, : \, R(z) = \!\!\!\!\!\!\!\!\sum_{\al,\bt\in \cM:\, \al\ne \bt }\!\!\!\!\!\!\!\! R_{\al,\bt}z^\al\bar z^\bt\right\}
\end{equation}

\begin{defn}[ scaling degree]\label{scialla}
	For $d\in \N$, we denote by $\cF^\td\subset \cF$ the vector space of homogeneous formal polynomials of degree $\td+2$, and define 
	\[
	\cF^{\leq d}= \oplus_{h\leq d} \cF^{h}\,,\quad \cF^{>d}:=\widehat\oplus_{h>d} \cF^{h} \,,\quad  \cF^{\geq d}:= \cF^{>d}\oplus \cF^{d}\,,\cF =\cF^{\leq d}\oplus\cF^{> d},\dots
	\]
	We define the projections  associated to these direct sum decompositions
	\[
	\Pi^{(\td)} H= \sum_{|\al|+|\bt|=\td+2}H_{\al,\bt} z^\al\bar z^\bt \,,\quad \Pi^{(> \td)} H= \sum_{|\al|+|\bt|> \td+2}H_{\al,\bt} z^\al\bar z^\bt\,,\dots
	\]
	Elements of $\cF^{\geq d}$ (resp. $\cF^{> d}$) are said to be {\it of scaling degree} $\geq d+2$ (resp. $>d+2$).
	Finally we define
	\[
	\Pi^\cK H =  \sum_{\al}H_{\al,\al} |z|^{2\al}\,,\quad \Pi^\cR H = \sum_{\al\ne \bt}H_{\al,\bt} z^\al\bar z^\bt.
	\]
	We denote by $\cK^d:= \cF^d \cap \cK$  and similarly for $\cR$ and $\geq \td ,\leq \td$.
	Note that $\cF= \widehat \oplus_d \cF^d$.
\end{defn}

\begin{rmk}
	By construction  the scaling degree is well behaved under multiplication and Poisson brackets. Indeed, if one has $H\in\cF^{\td_1}$, $K\in\cF^{\td_2}$  then $H K\in\cF^{\td_1+\td_2+2}$ and $\{H,K\} \in\cF^{\td_1+\td_2}$.
\end{rmk}
\begin{rmk}\label{gogna}
	Let $H_i\in \cF^{\geq \td_i}$ be a sequence of formal Hamiltonians with  $\td_{i+1}> \td_i$ for all $i\geq 1$. Then the series
	\[
	H= \sum_{i=1}^\infty H_i \in \cF^{\geq \td_1}
	\]
	is well defined since for any $\td\geq \td_0$ the projection
	\[\Pi^{(\leq \td)} H = \Pi^{(\leq \td)} \sum_{i: \td_i \leq \td}  H_i  \] is a finite sum.
\end{rmk}
We say that  a linear operator $L:\cF\to \cF$ is of degree (or increase the degree by) $\td$  if for all $h$ 
\[
L:\cF^{ \geq h} \to \cF^{ \geq h+\td}\,.
\]
We now state some Lemmas, regarding formal power series in infinitely many variables, which are proved in \cite{ProcesiStolo} and are used to ensure that a formal Birckhoff Normal Form exists.
\begin{lemma}\label{lemniscata}
	let $L_n$ be a sequence of linear operators on $\cF$  and let  $\td_n$ be the degree of $L_n$. If the sequence $\td_n$ increases to infinity then 
	\[
	L:= \sum_{n=1}^\infty L_n \,,\qquad T =\prod_{n=1}^\infty (\id + L_n)-\id 
	\]
	are  linear operators on $\cF$ of degree  $\td_1$.
	\\
	Given $G\in \cF^{\geq  \td}$, with $\td\geq 1$  we define 
	\begin{equation}
		\label{flusso}
		\ad_G:= \{G,\cdot \} \,,\quad \Phi_G:=	\exp(\{G,\cdot\})= \sum_{k\geq 0} \frac{\ad_G^k}{k!}\,,
	\end{equation}
	then $ 	\ad_G$ and  $\Phi_G-\id$  are operators of degree $\td$, namely
	\[
	\ad_G,\Phi_G -\id: \cF^{\geq h} \to \cF^{\geq h+\td}\,.
	\]
	Similarly for any sequence $b_k$  one has that 
	\[
	\sum_{k\geq \tn } b_k{\ad_G^k}: \cF^{\geq h} \to \cF^{\geq h+\td\tn }\,.
	\] 
\end{lemma}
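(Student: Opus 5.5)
The argument reduces to the grading by scaling degree; the key step is that operators of positive degree only affect finitely many homogeneous components at each level. Throughout we use the Remark that $\ad_G$ maps $\cF^{h}$ into $\cF^{h+\td}$ when $G\in\cF^{\td}$, together with Remark \ref{gogna} on summability of series whose degrees increase.

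\textbf{Series and infinite products.} Fix $H\in\cF$ and a degree $m$. The projection $\Pi^{(\le m)}L_nH$ vanishes as soon as $\td_n>m$, because $L_n$ maps $\cF$ into $\cF^{\ge \td_n}$. (Here $\cF=\cF^{\ge 0}$, since constant and linear terms are excluded.) Hence $\Pi^{(\le m)}\sum_n L_nH$ is a finite sum, and $LH:=\sum_n L_nH$ is well defined by Remark \ref{gogna}. Linearity and the degree statement follow componentwise: if $H\in\cF^{\ge h}$ then each $L_nH\in\cF^{\ge h+\td_n}\subset\cF^{\ge h+\td_1}$, provided $\td_n\ge\td_1$ (if the sequence is not monotone, use $\min\td_n$). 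For the product, write the finite partial product $T_N:=\prod_{n\le N}(\id+L_n)-\id$ as a sum over nonempty ordered index sets $n_1<\dots<n_r\le N$ of the compositions $L_{n_r}\cdots L_{n_1}$; the order is fixed by the product convention. Such a composition has degree $\sum_i\td_{n_i}\ge\td_{n_r}$. Consequently, on degrees $\le m$ only compositions with all indices satisfying $\td_{n_i}\le m$ contribute. These are finitely many, so $T_N H$ stabilizes componentwise as $N\to\infty$, and the limit defines $T$ with degree $\td_1$. One technical point remains: individual $\td_n$ might be $0$. The hypothesis "increases to infinity" excludes this except for finitely many $n$, and those are handled as a finite product.

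\textbf{Exponential.} For $G\in\cF^{\ge\td}$ with $\td\ge1$, the degree remark gives that $\ad_G=\{G,\cdot\}$ has degree $\td$. This is well defined on $\cF$ by Lemma \ref{anello}, and it extends to non-homogeneous $G$ by decomposing $G=\sum_{h\ge\td}\Pi^{(h)}G$ and applying the first part with $L_h=\ad_{\Pi^{(h)}G}$. Then $\ad_G^k$ has degree $k\td$, so the family $L_k=b_k\ad_G^k$ for $k\ge\tn$ has degrees $k\td\to\infty$, since $\td\ge1$. The first part therefore shows that $\sum_{k\ge\tn}b_k\ad_G^k$ is an operator of degree $\tn\td$. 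Taking $b_k=1/k!$ and $\tn=1$ shows that $\Phi_G-\id$ has degree $\td$.

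\textbf{Main obstacle.} The only genuinely delicate point is in the first step: checking that $\ad_G$ is a well-defined map $\cF\to\cF$ in infinitely many variables. This means each coefficient of $\{G,H\}$ must be a finite sum, and momentum conservation must be preserved. For this I would invoke Lemma \ref{anello}, i.e.\ \cite{ProcesiStolo}. Once that is granted, everything reduces to bookkeeping of degrees.
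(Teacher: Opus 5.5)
Your proof is correct. The paper itself gives no argument for Lemma \ref{lemniscata}: it only states it and refers to \cite{ProcesiStolo}. Your route is therefore a self-contained alternative to that citation.

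It shows that, once the bracket is known to be well defined on $\cF$ (Lemma \ref{anello}), the lemma is pure bookkeeping in the grading. At a fixed degree $m$, only the finitely many $L_n$ with $\td_n\le m$ contribute to $\Pi^{(\le m)}$, and hence only finitely many compositions in the partial products do. Sums and products therefore stabilize componentwise. The citation, by contrast, also covers the genuinely infinite-dimensional input.

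Two small points. First, the hypothesis you actually use is $\td_n\ge 0$ for all $n$. It is what gives $\sum_i\td_{n_i}\ge\max_i\td_{n_i}$. Without it the claim on $T$ can fail, since $L_2L_1$ may have degree $\td_1+\td_2<\td_1$. You should state this hypothesis explicitly. The separate treatment of $\td_n=0$ can be dropped: zero degrees need no special handling, because $\{n:\td_n\le m\}$ is finite in any case.

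Second, the point you defer as the main obstacle can be closed in a line. In \eqref{well}, fix a target monomial $z^\al\bar z^\bt$. An index $j\notin\mathrm{supp}\,\al\cup\mathrm{supp}\,\bt$ contributes only if the monomial of $F$ is $z^{\al'+\be_j}\bar z^{\bt'}$ or $z^{\al'}\bar z^{\bt'+\be_j}$ with $\al'\le\al$ and $\bt'\le\bt$. Momentum conservation then forces $j=-\pi(\al',\bt')$ or $j=\pi(\al',\bt')$ respectively. So each coefficient of $\{F,G\}$ is a finite sum.

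The same formula shows directly that every monomial of $\{G,H\}$ has total degree at least $\td+h+2$ when $G\in\cF^{\ge\td}$ and $H\in\cF^{\ge h}$. So decomposing $G$ into homogeneous parts to get the degree of $\ad_G$ is unnecessary.
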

\begin{defn}
	Given $G\in \cF^{\geq 1}$ we call the operator $\Phi_G$ defined in \eqref{flusso} a formal symplectic change of variables on $\cF$.
\end{defn}
The following Lemma, again proved in \cite{ProcesiStolo}, ensures the group structure of the formal symplectic changes of variables
\begin{lemma}[Baker-Campbell-Hausdorff]\label{BCH}
	Given   $F\in \cF^{\geq \td_1}$ and  $G\in \cF^{\geq \td_2}$, with $\td_i\geq 1$,  then there exists $K\in \cF^{\geq 1}$, such that
	\[
	e^{\{G,\}}e^{\{F,\}}=  e^{\{ K,\}} \,,\qquad  K-F-G \in  \cF^{ \geq \td_1+\td_2}
	\] 
\end{lemma}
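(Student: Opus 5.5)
The plan is to reduce the statement to the classical Baker--Campbell--Hausdorff identity in the completed free associative algebra on two letters $X,Y$. There one has $e^{X}e^{Y}=e^{\mathrm{BCH}(X,Y)}$, where $\mathrm{BCH}(X,Y)=X+Y+\frac12[X,Y]+\dots$ is a series of Lie monomials (Dynkin's formula). I would transport this identity to $\cF$ through the assignment $X\mapsto \ad_G$, $Y\mapsto\ad_F$, and use the scaling degree to make every infinite sum legitimate.

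\textbf{Step 1 (definition of $K$).} Set
\[
K:=\sum_{n\geq1}\mathrm{BCH}_n(G,F),
\]
where $\mathrm{BCH}_n(G,F)$ is obtained from the homogeneous part of length $n$ of $\mathrm{BCH}(X,Y)$ by replacing commutators with Poisson brackets, $X$ with $G$ and $Y$ with $F$. By the Remark on the behaviour of the scaling degree under brackets, a nested bracket containing $a$ copies of $G$ and $b$ copies of $F$ lies in $\cF^{\geq a\td_2+b\td_1}$. Since $\td_i\geq1$, this degree is at least $n=a+b$, so Remark \ref{gogna} shows that $K$ is a well defined element of $\cF^{\geq1}$. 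Moreover every term with $n\geq2$ in $\mathrm{BCH}$ is a Lie monomial containing both letters, hence has $a,b\geq1$. Therefore $K-F-G\in\cF^{\geq\td_1+\td_2}$.

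\textbf{Step 2 (homomorphism).} By the Jacobi identity in the Poisson algebra $\cF$ (Lemma \ref{anello}) one has $\ad_{\{A,B\}}=[\ad_A,\ad_B]$. By induction on the length of the nested bracket, the image of each Lie monomial under $X\mapsto\ad_G$, $Y\mapsto\ad_F$ is $\ad$ of the corresponding Poisson monomial. Consequently the unique algebra homomorphism $\rho$ from the free associative algebra to linear operators on $\cF$ with $\rho(X)=\ad_G$, $\rho(Y)=\ad_F$ satisfies $\rho(\mathrm{BCH}_n(X,Y))=\ad_{\mathrm{BCH}_n(G,F)}$.

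\textbf{Step 3 (extension to completions).} The main obstacle is to extend $\rho$ to formal series and to show it commutes with $\exp$ and with the infinite sum defining $\ad_K$, since $\cF$ has infinitely many variables and we cannot argue in finite dimension. Here the degree filtration does the work. By Lemma \ref{lemniscata}, the image of a word of length $n$ has degree $\geq n$. Thus, for fixed $h$ and $N$, the map
\[
\Pi^{(\leq N)}\circ\rho(\cdot)\;\text{ restricted to }\;\cF^{\geq h}
\]
sees only finitely many words, namely those of length $\leq N-h$. Lemma \ref{lemniscata} then also guarantees that $\sum_n\rho(\text{degree-}n\text{ part})$ converges to an operator, and that $\rho(e^{X}e^{Y})=\Phi_G\Phi_F$. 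In the same way, $\ad_K=\sum_n\ad_{\mathrm{BCH}_n}$, because $\ad$ is linear and $\Pi^{(\leq N)}$ involves only finitely many summands. Likewise $\rho(e^{\mathrm{BCH}})=e^{\rho(\mathrm{BCH})}=e^{\ad_K}$, checking again projection by projection.

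\textbf{Conclusion.} Applying $\rho$ to $e^Xe^Y=e^{\mathrm{BCH}(X,Y)}$ and comparing each projection $\Pi^{(\leq N)}$ on each $\cF^{\geq h}$ gives $e^{\{G,\cdot\}}e^{\{F,\cdot\}}=e^{\{K,\cdot\}}$. The degree estimate on $K-F-G$ was already obtained in Step 1.
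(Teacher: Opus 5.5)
The paper gives no argument for this lemma: it is quoted from \cite{ProcesiStolo}. Your proposal is therefore a genuinely independent, self-contained proof, and it is correct.

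\textbf{Your route.} You take the universal identity $e^Xe^Y=e^{\mathrm{BCH}(X,Y)}$ in the completed free algebra $\mathbb{Q}\langle\langle X,Y\rangle\rangle$ and transport it to operators on $\cF$ through $X\mapsto\ad_G$, $Y\mapsto\ad_F$. The two ingredients are:
\begin{itemize}
\item the Jacobi identity, which gives $\ad_{\{A,B\}}=[\ad_A,\ad_B]$;
\item the scaling degree: since every word of length $n$ is sent to an operator of degree at least $n$, each projection $\Pi^{(\leq N)}$ involves only finitely many words.
\end{itemize}
Your degree bookkeeping for $K-F-G$ is also sound. A bracket monomial with $a$ copies of $G$ and $b$ copies of $F$ lies in $\cF^{\geq a\td_2+b\td_1}$. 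Single-letter monomials of length at least $2$ vanish because $\{G,G\}=\{F,F\}=0$.

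\textbf{One point to state explicitly.} Your phrase ``checking projection by projection'' hides the continuity fact behind every interchange of limits in Step 3. It is worth writing down. If $L$ is linear with $L(\cF^{\geq h})\subset\cF^{\geq h}$ for all $h$, then
\[
\Pi^{(\leq N)}LH=\Pi^{(\leq N)}L\,\Pi^{(\leq N)}H ,
\]
so $L$ commutes with degree-convergent sums. This has three consequences:
\begin{itemize}
\item Applied to $L=\rho(u)$ for words $u$, it shows that the extension of $\rho$ to the completion is multiplicative. Hence $\rho(e^Xe^Y)=\Phi_G\Phi_F$ and $\rho(e^Z)=e^{\rho(Z)}$.
\item The analogous statement in the first slot of the bracket, namely that $\Pi^{(\leq N)}\{K,H\}$ depends only on $\Pi^{(\leq N)}K$, gives $\ad_{\sum_n K_n}=\sum_n\ad_{K_n}$.
\item You only need to fix one bracket representation of $\mathrm{BCH}_n$, for instance Dynkin's, to define $K$. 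Then $\rho(\mathrm{BCH}_n(X,Y))=\ad_{\mathrm{BCH}_n(G,F)}$ follows by linearity, so no well-definedness issue arises.
\end{itemize}

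\textbf{What each approach buys.} The citation saves space but leaves the lemma, which the paper then uses in Lemma \ref{compo} and Proposition \ref{formal-lin}, dependent on an external source. Your argument removes that dependence. It also produces $K$ explicitly as the Poisson-bracket BCH series, with the finer information that its bidegree-$(a,b)$ component lies in $\cF^{\geq a\td_2+b\td_1}$.
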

\begin{lemma} \label{compo} Given a sequence of generating functions $G_i\in \cF^{{\geq} \td_i}$ with $\td_{i+1} > \td_i\geq 1$ then there exists $\cG\in \cF^{\geq d_1}$ such that the composition
	\[
	\prod_i	e^{\{G_i,\}} = e^{\{\cG,\}}
	\] 
\end{lemma}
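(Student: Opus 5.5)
The plan is to build $\cG$ as the limit of the partial compositions, using Lemma~\ref{BCH} at each step and the degree bookkeeping of Lemma~\ref{lemniscata} and Remark~\ref{gogna} to make the limit meaningful. I read the infinite product as the limit of the finite products $P_m:=e^{\{G_m,\}}\cdots e^{\{G_1,\}}$; the other ordering is handled the same way. By Lemma~\ref{lemniscata}, writing $e^{\{G_i,\}}=\id+L_i$ with $L_i$ of degree $\td_i$, this product is a well-defined operator on $\cF$.

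\textbf{Step 1 (recursive construction).} Set $K_1:=G_1\in\cF^{\geq \td_1}$. Given $K_m\in\cF^{\geq \td_1}$ with $e^{\{K_m,\}}=P_m$, apply Lemma~\ref{BCH} with $G=G_{m+1}$ and $F=K_m$. This gives $K_{m+1}$ with
$$e^{\{K_{m+1},\}}=e^{\{G_{m+1},\}}e^{\{K_m,\}}=P_{m+1}$$
and
$$K_{m+1}-K_m-G_{m+1}\in\cF^{\geq \td_1+\td_{m+1}}.$$
Since $G_{m+1}\in\cF^{\geq \td_{m+1}}$, it follows that $K_{m+1}-K_m\in\cF^{\geq \td_{m+1}}$, and in particular $K_{m+1}\in\cF^{\geq\td_1}$, so the induction continues.

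\textbf{Step 2 (convergence of the generators).} Define
$$\cG:=K_1+\sum_{m\geq1}(K_{m+1}-K_m).$$
The $m$-th increment lies in $\cF^{\geq\td_{m+1}}$, and since the $\td_m$ are strictly increasing integers, $\td_m\to\infty$. Remark~\ref{gogna} therefore shows the series defines an element of $\cF^{\geq \td_1}$. Moreover $\cG-K_m\in\cF^{\geq \td_{m+1}}$, i.e.\ $\Pi^{(\leq \td)}\cG=\Pi^{(\leq\td)}K_m$ as soon as $\td_{m+1}>\td$.

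\textbf{Step 3 (identification of the operators).} This is the step requiring care. It must be shown that for every $H\in\cF$ and every $\td$,
$$\Pi^{(\leq\td)}e^{\{\cG,\}}H=\Pi^{(\leq\td)}P_mH \quad\text{for all large } m,$$
and likewise that $\Pi^{(\leq\td)}$ of the infinite product applied to $H$ coincides with $\Pi^{(\leq\td)}P_mH$ for large $m$.

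The second claim follows from the same expansion used in Lemma~\ref{lemniscata}: every term of $\prod_i(\id+L_i)$ involving some $L_i$ with $i>m$ has degree $\geq\td_{m+1}$.

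For the first claim, write $\cG=K_m+R_m$ with $R_m\in\cF^{\geq\td_{m+1}}$ and expand
$$e^{\{\cG,\}}-e^{\{K_m,\}}=\sum_{k\geq1}\frac1{k!}\Big((\ad_{K_m}+\ad_{R_m})^k-\ad_{K_m}^k\Big).$$
Each word in this expansion contains at least one factor $\ad_{R_m}$, which raises the degree by at least $\td_{m+1}$, while the other factors raise it by at least $\td_1\geq1$. The words containing $k$ factors in total therefore raise the degree by at least $\td_{m+1}+(k-1)$. Hence only finitely many $k$ contribute to any fixed truncation, and the whole difference maps $\cF^{\geq h}$ into $\cF^{\geq h+\td_{m+1}}$, which vanishes under $\Pi^{(\leq\td)}$ once $\td_{m+1}$ is large enough.

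I expect the main obstacle to be exactly this degree bookkeeping for noncommuting operators, i.e.\ justifying that the rearranged double series makes sense in the formal topology. I would handle it by working degree by degree with Lemma~\ref{lemniscata}, which already asserts the analogous degree bound for $\sum_k b_k\ad_G^k$. Combining the two claims gives $e^{\{\cG,\}}=\prod_i e^{\{G_i,\}}$.
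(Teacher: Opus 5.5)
Your proof is correct and follows the paper's route: iterate Lemma~\ref{BCH} to get generators $K_m$ with $e^{\{K_m,\}}$ equal to the partial products, use $K_{m+1}-K_m\in\cF^{\geq \td_{m+1}}$ to show that $\Pi^{(\leq \td)}K_m$ stabilizes, and take $\cG$ to be the limit. Your Step~3, which checks that $e^{\{\cG,\}}$ actually coincides with the infinite product, makes explicit a point the paper leaves implicit. The rearrangement you worried about is harmless, because at each fixed truncation degree only finitely many words contribute.
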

\begin{proof} 
	By Lemma \ref{lemniscata} with $L_n = e^{\{G_n,\}}  -\id$ we know that  $\prod_i	e^{\{G_i,\}}$ is a well defined operator of $\cF$. 
	Using Lemma \ref{BCH} we can define $F_k\in \cF^{\geq1}$ iteratively so that
	\[
	e^{\{F_k,\cdot\}}= e^{\{G_k,\cdot\}} e^{\{F_{k-1},\cdot\}}
	\]
	since $e^{\{G_k,\cdot\}} -\id$ is of degree $\td_k$  there exists $N(\td)$ such that if $k>N(\td)$ then 
	\[
	\Pi^{(\leq  \td )}	F_k = \Pi^{(\leq  \td )}	F_{N(\td)}
	\]
	Then  $\cG= \lim_{k\to \infty} F_k$ is well defined.
\end{proof}
	%
%

\subsection{Formal Birkhoff Normal Form for families of commuting Hamiltonians}	Let us  now fix $I=\Z_0=\Z\setminus\{0\}$ and let us assume that we have a family of  formal Hamiltonians of the form:
\[ 
H_k:= L_k+ R_k^{\geq 1}\in\cF \,,\qquad L_k = \sum_{j\in \Z_0} \omega_{k.j}|z_j|^2\,,\quad R_k^{\geq 1}\in \cF^{\geq 1}
\]
for $k\in \N_{\geq 1}$ ans satisfying the following

\begin{enumerate}
	\item[H1.] {\bf Involution:} For any choice of $k_1,k_2\in \N_{\geq1}$, as well as  $k\in \N_{\geq1}$ and $j\in \N_{\geq1}$ we have
	\[
	\{H_{k_1},H_{k_2}\}=0\,,\qquad \{H_k, M_j\}=0
	\]
	where  $M_j =|z_j|^2- |z_{-j}|^2$.
	\item[H2.]{\bf Independence:} 	for any $N\geq 1$  there exists $M=M(N)$ such that  $\ell\in \Z^\Z_f$ with $|\ell|_1\leq  N$  and $\ell_j=\ell_{-j}$
	satisfies 
	\begin{equation}\label{resonance}
		\omega_k\cdot \ell =0 \,,\quad \forall k=1,\dots, M(N)
	\end{equation} if and only if $\ell=0$.
\end{enumerate}

\begin{prop}\label{formal-lin}Consider a Family of Hamiltonians satisfying H1 and H2.
	There exists $S\in  \cF^{\geq 1}$ such that for all $k\in \N_{\geq1}$ one has 
	\[
	e^{\{S,\cdot\}} H_k = L_k  + Z_k\,,\quad Z_k \in \cK^{\geq 2}\,.
	\]
\end{prop}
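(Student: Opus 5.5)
We construct $S$ by induction on the scaling degree, as $S=\lim_{\td\to\infty}\cG_\td$ with $e^{\{\cG_\td,\cdot\}}=\prod_{i\le\td}e^{\{S_i,\cdot\}}$ and $S_i\in\cF^{i}$; the limit exists by Lemma \ref{compo}. The inductive hypothesis at step $\td\ge1$ is that, after conjugation by $e^{\{\cG_{\td-1},\cdot\}}$, every Hamiltonian has the form
\[
H_k^{(\td-1)}=L_k+Z_k^{<\td}+R_k^{(\td)}+R_k^{(>\td)},
\]
where $Z_k^{<\td}\in\cK$ has degree between $1$ and $\td-1$, $R_k^{(\td)}\in\cF^{\td}$, and $R_k^{(>\td)}\in\cF^{>\td}$. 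Since conjugation by $e^{\{\cG,\cdot\}}$ preserves Poisson brackets, the transformed Hamiltonians still satisfy H1. This holds provided each $S_i$ commutes with all $M_j$. We will preserve this by choosing $S_i$ to contain only monomials that commute with every $M_j$, that is, $\ell=\al-\bt$ with $\ell_j=\ell_{-j}$.

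At step $\td$ we look for $S_\td\in\cR^\td$ solving the homological equations $\{S_\td,L_k\}+\Pi^\cR R_k^{(\td)}=0$ for every $k$. We then set $Z^{(\td)}_k:=\Pi^\cK R_k^{(\td)}$. On monomials, $\ad_{L_k}$ acts diagonally:
\[
\{L_k,z^\al\bar z^\bt\}=\pm\im\,\omega_k\cdot(\al-\bt)\,z^\al\bar z^\bt .
\]
Fix a monomial with $\ell:=\al-\bt\neq0$ in $\Pi^\cR R_{k}^{(\td)}$ for some $k$. First we must show $\ell_j=\ell_{-j}$. Since $\{R^{(\td)}_k,M_j\}$ is the degree-$\td$ part of $\{H_k^{(\td-1)},M_j\}=0$, and $\{\cdot,M_j\}$ multiplies this monomial by $\im(\ell_j-\ell_{-j})$ up to sign, the coefficient must vanish unless $\ell_j=\ell_{-j}$. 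So only such $\ell$ occur, with $|\ell|_1\le\td+2$. By H2 there is some $k_0\le M(\td+2)$ with $\omega_{k_0}\cdot\ell\ne0$. We use $H_{k_0}$ to define
\[
S_\td:=\sum \frac{(R_{k_0}^{(\td)})_{\al,\bt}}{\pm\im\,\omega_{k_0}\cdot\ell}\,z^\al\bar z^\bt ,
\]
choosing $k_0=k_0(\ell)$ per monomial.

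The main obstacle is compatibility: the same $S_\td$ must also solve the homological equation for every other $k$. Here we use the involution. Take the degree-$\td$ component of $\{H^{(\td-1)}_{k_1},H^{(\td-1)}_{k_2}\}=0$. Because $Z^{<\td}$ lies in $\cK$ and has degree less than $\td$, its brackets with the $L$'s vanish. Brackets among the $Z^{<\td}$, and between $Z^{<\td}$ and the $R$'s, produce terms of degree $\neq\td$, or of degree $\td$ only when both factors have positive degree summing to $\td$. Those factors are already normalized and lie in $\cK$, and two elements of $\cK$ commute. The resulting identity is
\[
\{L_{k_1},R^{(\td)}_{k_2}\}=\{L_{k_2},R^{(\td)}_{k_1}\}.
\]
Reading this coefficientwise gives $\omega_{k_1}\!\cdot\ell\,(R_{k_2})_{\al,\bt}=\omega_{k_2}\!\cdot\ell\,(R_{k_1})_{\al,\bt}$. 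Hence $(R_{k})_{\al,\bt}/(\omega_k\cdot\ell)$ is independent of $k$ whenever the denominator is nonzero. If $\omega_k\cdot\ell=0$, taking $k_1=k$ and $k_2=k_0$ forces $(R_k)_{\al,\bt}=0$. Therefore $S_\td$ solves every homological equation at once.

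Two properties of $S_\td$ remain to check. Reality follows from the symmetry of the construction under $\al\leftrightarrow\bt$. Momentum conservation holds because $\ell$ already satisfies it. Finally, by Lemma \ref{lemniscata}, $e^{\{S_\td,\cdot\}}$ changes only degrees $\ge\td$. At degree $\td$ the change is exactly $\{S_\td,L_k\}$, which produces the normal form $Z_k^{(\td)}$, so the induction continues. Passing to the limit via Lemma \ref{compo} and Remark \ref{gogna} gives $Z_k=\sum_\td Z^{(\td)}_k$. The term $Z^{(1)}$ vanishes because degree-$3$ monomials cannot satisfy $\al=\bt$, so in fact $Z_k\in\cK^{\ge2}$.
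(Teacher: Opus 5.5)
Your proposal is correct and follows the same route as the paper's proof. Both arguments normalize degree by degree, use $\{H_k,M_j\}=0$ to restrict to $\ell_j=\ell_{-j}$, and use H2 to find a nonresonant index $k_0$. The degree-$\td$ part of $\{H_{k_1},H_{k_2}\}=0$ then gives $\omega_{k_1}\cdot\ell\,(R_{k_2})_{\al,\bt}=\omega_{k_2}\cdot\ell\,(R_{k_1})_{\al,\bt}$, so a single generator solves every homological equation, and Lemma \ref{compo} passes to the limit. You are slightly more careful than the paper in two places: you require each $S_\td$ to commute with the $M_j$, so that H1 survives the conjugation, and you let the remainder carry $\cK$-terms of higher degree instead of assuming it lies in $\cR$.
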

\begin{proof}
	We proceed by induction. Let us assume that, for some $n\geq 1$ there exists an $S_n$ such that
	for all $k$
	\[
	e^{\{S_n,\cdot\}} H_k=:H_{k,n}= L_k  + Z_{k,n}+ R_{k,n}\,,\quad Z_{k,n} \in \cK^{2\leq  \td < n}\,, \quad R_{k,n} \in \cR^{\geq n}\,.
	\]
	In order to make a normal form step we look for a formal change of variables $e^{\{F_n,\cdot\}}$, with $F_n\in \cR^{(n)}$ so that
	\[
	e^{\{F_n,\cdot\}} H_k = L_k  + Z_{k,n+1}+ R_{k,n+1 },,\quad Z_{k,n} \in \cK^{2\leq  \td < n+1}\,, \quad R_{k,n} \in \cR^{\geq n+1}
	\]
	By construction 
	\begin{align*}
		e^{\{F_n,\cdot\} }H_{k,n} &=L_k+ Z_{k,n} + R_{k,n} + \{F_n, L_k\} + \sum_{h=2}^\infty \frac{\ad_{F_n}^{h-1}}{h!} \{F_n,L_k \}
		+ \sum_{k=1}^\infty \frac{\ad_{F_n}^k}{k!}(Z_{k,n} + R_{k,n}) \\
		&= L_k+ Z_{k,n} + \Pi^\cK R_{k,n}  - \sum_{k=1}^\infty \frac{\ad_{F_n}^{k}}{(k+1)!}\Pi^{\cR}  P_{i}  + \sum_{k=1}^\infty \frac{\ad_{F_n}^k}{k!}(Z_{k,n} + R_{k,n})\,.
	\end{align*}
	So we may set
	\[
	Z_{k,n+1}:= Z_{k,n}+
	\Pi^{(n)}\Pi^\cK R_{k,n} \,,\quad R_{k,n+1}= e^{\{F_n,\cdot\} }H_{k,n}- L_k - Z_{k,n+1}\,.
	\]
	In order to iterate our inductive step we need to show that we may fix $F_n$ so that for all $k$ 
	\[
	\Pi^{\td\leq  n} R_{k,n+1}= \{F_n,L_k\} + \Pi^\cR R_{k,n}^{(n)} =0\,.
	\]
	Passing to the Taylor coefficients this reads
	\begin{equation}
		\label{homotutte}
		\im \omega_k \cdot (\al-\bt)(F_n)_{\al,\bt} =(R_{k,n})_{\al,\bt} \,,\quad \forall \al\ne \bt\,:\quad |\al|+|\bt| =n+2\,.
	\end{equation}
	Finally the condition $\{H_k,K_j\}=0$ ensures that the equation above is trivial unless
	\begin{equation}
		\label{condk}
		{\al_j-\bt_j = \al_{-j} -\bt_{-j}}
	\end{equation}
	We claim that a  common solution to  the above infinite list of equations is given by
	\begin{equation}
		\label{questo}
		(F_n)_{\al,\bt} =\begin{cases}
			\im \dfrac{(R_{1,n})_{\al,\bt}}{\omega_1 \cdot (\al-\bt)}\,,\qquad \mbox{if}\;  \omega_1 \cdot (\al-\bt)\neq 0\\
			\im 	\dfrac{(R_{2,n})_{\al,\bt}}{\omega_2 \cdot (\al-\bt)}\,,\qquad \mbox{if}\;  \omega_1 \cdot (\al-\bt)=0 \;\mbox{and}\;   \omega_2 \cdot (\al-\bt)\neq 0\\
			\vdots
			\\
			\im 	\dfrac{(R_{k,n})_{\al,\bt}}{\omega_k \cdot (\al-\bt)}\,,\qquad \mbox{if}\;  \omega_1j\cdot (\al-\bt)=0 \; \forall j< k \;\mbox{and}\;   \omega_k \cdot (\al-\bt)\neq 0\\
			\vdots
		\end{cases}
	\end{equation}
	for all $(\al,\bt) \in \cM$ satisfyting \eqref{condk} and  $(F_n)_{\al,\bt}=0$ otherwise.
	To prove our claim, we use the fact that, by the involution hypothesis H1., we have 
	\[
	0= e^{\{S_n,\cdot \}} \{H_{k_1},H_{k_2}\}= \{H_{k_1,n},H_{k_2,n}\}=\{ L_{k_1}  + Z_{k_1,n}+ R_{k_1,n},L_{k_2}  + Z_{k_2,n}+ R_{k_2,n}\}
	\]
	so that, since $\{Z_{k_1,n},Z_{k_2,n}\}=0$ and $R_{k_i,n} \in \cF^{\geq n}$, one has 
	\[
	\Pi^{(n)} \{H_{k_1,n},H_{k_2,n}\} =\{L_{k_1}, R_{k_2,n}^{(n)}\} + \{R_{k_1,n}^{(n)}, L_{k_2}\}=0\,,
	\]
	passing to the Taylor coefficients we have
	\begin{equation}
		\label{commuto}
		\omega_{k_2} \cdot (\al-\bt) (R_{k_1,n})_{\al,\bt} = \omega_{k_1} \cdot (\al-\bt) (R_{k_2,n})_{\al,\bt} \,,\qquad \forall \; |\al|+|\bt| = n+2\,.
	\end{equation}
	Denoting $\ell= \al-\bt$, we recall that, by the independence assumption H2., there exists ${\mathbf k}_0 = {\mathbf k}_0(\al-\bt)$ such that
	\[
	\omega_{h} \cdot (\al-\bt) =0\quad  \forall h< {\mathbf k}_0\,,\qquad  \omega_{{\mathbf k}_0}\cdot (\al-\bt) \neq 0\,.
	\]
	By \eqref{commuto}, for all $k$ and all $ |\al|+|\bt| = n+2$ we have
	\[
	(R_{k,n})_{\al,\bt} =
	\frac{\omega_{k} \cdot (\al-\bt)}{\omega_{{\mathbf k}_0}\cdot (\al-\bt)} (R_{{\mathbf k}_0,n})_{\al,\bt} \,,\qquad \forall \; |\al|+|\bt| = n+2\,,
	\]
	which shows that $F_n$ defined in \eqref{questo} satisfies \eqref{homotutte}.
	Finally by Lemma \ref{compo} we can define $S_{n+1}\in \cF^{\geq 1}$ so that
	\[
	e^{\{S_{n+1},\cdot\}}= e^{\{F,\cdot \} }e^{\{S_n,\cdot\}}\,,
	\]
	passing to the limit we obtain the desired result.
\end{proof}
\subsection{Application to the Kirchhoff-Pohozaev hierarchy}
In order to prove our main result Theorem \ref{formal-birk},  it is convenient to pass  to complex notation, as seen in Section \ref{complessi}. For the expressions of the constants of motion we refer to  \eqref{costanticompl}.
By construction all the building blocks $D_i,B_i,Q_i$ are  a formal quadratic polynomials which preserve momentum. Thus they all belong to $\cF^{(0)}$ and,  by Lemma \ref{anello} , so does $A= (1+D_1)^{-1}$ as well as the $E_i$ and $F_i$. Again by Lemma \ref{anello} this ensures that the $\J_i\in \cF^{(0)}$ and moreover 
\[
\J_k = 2\sum_{j\in \Z_0} |j|^{2k-1}|z_j|^2 +\J_k^{\geq 2}\,,\qquad   \cI_k = |j|(|z_j|^2+|z_{-j|^2} )+\cI_k^{\geq 2}\,,\qquad  \J_k^{\geq 2},\cI_k^{\geq 2}\in \cF^{\geq 2}\,.
\] 
We claim that both the families  $\J_k$ and $\cI_k$ satisfy Assumptions H1, H2. The involution assumption is a direct consequence of Proposition \ref{involuz} and  Lemma \ref{prop:comm_gamma2}. Regarding assumption H2, for the $\cI_|j|^2$  this is obvious, since the $(|z_j|^2+|z_{-j}|^2 ,|z_j|^2-|z_{-j}|^2)_{j\in \N_{\geq 1}}$ are clearly independent.
\\
Regarding the $\I_k$, let us show that, setting $L_k:= \sum_{j\in \Z_0} |j|^{2k-1}|z_j|^2 $, and hence $\omega_k = |j|^{2k+1}$, Assumption H2 holds.

This is proved for instance in \cite{BambusiGrebert2006TameModulus}.
In fact, assume by contradiction that H2 does not hold. Since $\ell\in \Z^\Z_f$ has finite support and $\ell_j= \ell_{-j}$, we may assume, without loss of generality, that $\ell=(\ell_{j_i})_{i=1,\dots,M}$ for some $M$ with all the $\ell_{j_i}\ne 0$ and distinct $j_i>0$.  Let us then consider the system of $M$  linear equations
\[
\begin{cases}
	&\sum_{i=1}^M \omega_{1,j_i} x_i =0 \\&\vdots\\
	&\sum_{i=1}^M \omega_{M,j_i} x_i =0 \,.
\end{cases}
\] 
If H2 were false, there would exist a non trivial solution to the system above, which in turm means that 
\[
\det(\Omega)=0 \,,\qquad \Omega= \begin{pmatrix}
	j_1 &j_2 &\dots & j_M \\
	j_1^3 &j_2^3 &\dots & j_M^3\\
	\vdots &\vdots &\vdots&\vdots\\
	j_1^{2M-1} &j_2^{2M-1} &\dots & j_M^{2M-1}
\end{pmatrix}
\]
but this cannot hold true, since this is a Vandermonde matrix we have 
\[
\det(\Omega) = \left( \prod_{i=1}^M j_i \right) \prod_{1 \leq  k < i \leq M} (j_i^2 - j_k^2)
\]
and the result follows. In both cases we apply Proposition \ref{formal-lin}, obtaining in principle two formal normalizing changes of variables. Denote by $\Psi$ the change of variables which normalizes the $\cI$ hierarchy. In order to show that it normalizes also the $\J_k$ we only need to remark that the identities  \eqref{calIJ} holds at the level of formal power series.
\bibliographystyle{plain}
\bibliography{BibliografiaTot}
\end{document}